\newtheorem{thm}{Theorem}[section]
\newtheorem{cor}[thm]{Corollary}
\newtheorem{lem}[thm]{Lemma}
\newtheorem{remark}[thm]{Remark}
\newcommand{\wc}{\overset{*}{\longrightarrow}}
\newcommand{\dist}{\mathrm{dist}}
\newcommand{\ext}{\mathrm{ext}}
\begin{document}

\title[Asymptotics of Carleman polynomials]{An asymptotic integral representation for Carleman orthogonal
polynomials}
\author[E.
Mi\~{n}a-D\'{\i}az]{Erwin Mi\~{n}a-D\'{\i}az}
\date{}
\address{Indiana University-Purdue University Fort Wayne, Department of Mathematical Sciences,
2101 E. Coliseum Blvd., Fort Wayne, IN
46805-1499, USA. Email: minae@ipfw.edu.}

\begin{abstract}
In this paper we investigate the asymptotic
behavior of polynomials that are orthonormal over the
interior domain of an analytic Jordan curve $L$
with respect to area measure. We prove that,
inside $L$, these polynomials behave asymptotically like a sequence of certain
integrals involving the canonical conformal map
of the exterior of $L$ onto the exterior of the
unit circle and certain meromorphic kernel
function defined in terms of a conformal map of
the interior of $L$ onto the unit disk. The
error term in the integral representation is
proven to decay geometrically and sufficiently
fast, allowing us to obtain more precise
asymptotic formulas for the polynomials under
certain additional geometric considerations.
These formulas yield, in turn, fine results on
the location, limiting distribution and
accumulation points of the zeros of the
polynomials.
\end{abstract}

\keywords{Orthogonal polynomials; asymptotic behavior; integral representation; zeros of
polynomials; equilibrium measure; Schwarz reflection
principle; conformal map; lemniscate.}

\maketitle

\section{Introduction}

Let $L_1$ be an analytic Jordan curve in the
complex plane $\mathbb{C}$ and let $\Omega_1$ and
$G_1$ be, respectively, the unbounded and bounded
components of $\overline{\mathbb{C}}\setminus L_1$. Let
$\{P_n(z)\}_{n=0}^\infty$ be the unique sequence
of polynomials orthonormal over $G_1$, i.e.,
satisfying
\begin{equation}\label{eq63}
P_n(z)=\kappa_nz^n+\mathrm{lower\ degree\
terms},\quad \kappa_n>0,
\end{equation}
\begin{equation}\label{eq64}
\frac{1}{\pi}\int_{G_1}P_{n}(z)\overline{P_{m}(z)}dA(z)=\left\{\begin{array}{ll}
                                                                 0,\ &\ n\not=m, \\
                                                                 1,\ &\ n=m,
                                                               \end{array}
\right.
\end{equation}
where $dA$ denotes the area measure.

These polynomials were first studied
 by Carleman
\cite{Carleman} in 1922, who proved that they
satisfy the strong asymptotic formula
\[
P_{n}(z)=\sqrt{n+1}\,\phi'(z)[\phi(z)]^n\left[1+o(1)\right]
\]
locally uniformly as $n\to\infty $ on certain
open set $\Omega_\rho\supset\overline{\Omega}_1$, where
$\phi$ is the conformal map of $\Omega_1$ onto
the exterior of the unit circle satisfying that
$\phi(\infty)=\infty$, $\phi'(\infty)>0$ (see
Theorem \ref{thm5} below for more details). In particular, it follows that the zeros of $P_n$ must accumulate as $n\to\infty$ on $\mathbb{C}\setminus \Omega_\rho$.

However, despite the fact that Carleman's work
pioneered the study of polynomials orthogonal
over planar domains, and that several subsequent
works have been devoted to the subject (see,
e.g., \cite{Korovkin}, \cite{Smirnov}, \cite{Suetin1},
 \cite{victor}, \cite{Levin},
\cite{mina}), no significant progress has been made in
understanding the behavior of the polynomials
$P_n(z)$ and its zeros in the orthogonality
domain $G_1$ (more exactly, in $\mathbb{C}\setminus\Omega_\rho$). The aim of this paper is precisely
to clarify this fundamental question.

The precise statements of our results are
contained in Section \ref{mainresults} below.
Roughly speaking, we show that the behavior of
$P_n$ inside $G_1$ is governed by both the
exterior map $\phi$ and certain meromorphic ``kernel
function" $L(\zeta,z)$, which is defined in
terms of a conformal map $\varphi$ of $G_1$ onto
the unit disk by
\[
L(\zeta,z):=\frac{\varphi'(\zeta)\varphi'(z)}{[\varphi(\zeta)-\varphi(z)]^2}\,.
\]

More exactly, $\phi$ and $L(\zeta,z)$ canalize
their influence over $P_n$ through the asymptotic
integral representation
\begin{equation}\label{eq65}
P_{n}(z)=\frac{(n+1)^{-1/2}}{2\pi
 i}
 \oint_{L_1}L(\zeta,z)[\phi(\zeta)]^{n+1}d\zeta
+o(1)\,,\quad z\in G_1.
\end{equation}
The full version of (\ref{eq65}), that is, with a
good estimate on the rate of decay of the error
term, is stated as Theorem \ref{thm3} of
Subsection \ref{asymptoticformulas}.

We then exploit such a representation to obtain
very detailed asymptotics of $P_n$ (see Theorems
\ref{thm1} and \ref{thm2} of Subsection
\ref{asymptoticformulas}) valid for certain quite
general sets  $\Omega_\rho$ having piecewise
analytic boundary ($\Omega_\rho$ is the set on
which Carleman's formula holds). Of particular
interest is Theorem \ref{thm1}, which depicts the
oscillatory behavior of $P_n$ in the interior of
$\mathbb{C}\setminus\Omega_\rho$.  As a
consequence, fine results on the location,
limiting distribution and accumulation points of
the zeros of the polynomials follow. These are
discussed in Subsection \ref{thezeros}.

In Subsection \ref{lemniscates} we examine in
detail the case of $L_1$ being the lemniscate
$\{z:|z^s-1|=R^s\}$, where $s\geq 2$ is an
integer and $R>1$. This example illustrates the
situation in which Theorem \ref{thm1} fails to
describe the behavior of certain subsequences of
$\{P_n\}_{n\geq 0}$. In particular, it
exemplifies what could happen if a key hypothesis
in the statement of some of the zero results
fails to hold true. Finally, the proof of all the
results are given in Section \ref{proofs}.

It is important to remark that the results
obtained in this paper have their counterparts
for polynomials orthogonal over an analytic curve
with respect to a positive analytic weight. These
will appear in a paper to be submitted for
publication soon.

We also remark that another important system of
polynomials associated with a Jordan curve, the
so-called Faber polynomials (see the monograph
\cite{suetinfaber}), also satisfies an integral
representation similar (though simpler and exact)
to (\ref{eq65}). In a recent paper
\cite{minafaber}, the author has exploited such a
representation to derive precise asymptotic
formulas for the Faber polynomials associated to
a piecewise analytic curve. The results and
techniques of proof in \cite{minafaber} exhibit a
resemblance to those of the present paper that is
worth noticing, since Faber polynomials have been
often used as a tool for obtaining asymptotic
properties of orthogonal polynomials over curves
and regions.

\section{Main results}\label{mainresults}

\subsection{Asymptotic formulas}\label{asymptoticformulas}

The following notation will be used throughout the
paper. Given $r\geq 0$, we set
\[
\mathbb{T}_r:=\{w:|w|=r\},\quad
\mathbb{E}_r:=\{w:r<|w|\leq \infty\},\quad
\mathbb{D}_r:=\{w:|w|<r \}.
\]
If $K$ is a set and $f$ is a function defined on
$K$, $\overline{K}$ denotes the closure of $K$ and
$f(K):=\{f(z):z\in K\}$.

As in the introduction, $L_1$ is an analytic
Jordan curve in the complex plane and $\Omega_1$
and $G_1$ are, respectively, the unbounded and
bounded components of $\overline{\mathbb{C}}\setminus
L_1$.

Let $\psi(w)$ be the unique conformal map of
$\mathbb{E}_1$ onto $\Omega_1$ satisfying that
$\psi(\infty)=\infty$, $\psi'(\infty)>0$. Let
$\rho\geq 0$ be the smallest number such that
$\psi$ has an analytic and \emph{univalent}
continuation from $\mathbb{E}_1$ to
$\mathbb{E}_\rho$. Because $L_1$ is analytic,
$\rho<1$. For every $r\in [\rho,\infty)$, let
\[
\Omega_r:=\psi(\mathbb{E}_r),\quad L_r:=\partial
\Omega_r,\quad G_r:=\mathbb{C}\setminus
\overline{\Omega}_r,
\]
and let $\phi(z):\Omega_\rho\mapsto
\mathbb{E}_\rho$ be the inverse of $\psi$.
Observe that, for every $r>1$, $L_r$ is an
analytic Jordan curve.

The polynomials $P_n(z)$, $n=0,1,2,\ldots,$ that
are orthonormal over $G_1$ with respect to area
measure, that is, satisfying (\ref{eq63}) and
(\ref{eq64}), were first considered by Carleman
in \cite{Carleman}. We will refer to them as the
Carleman polynomials\footnote{Carleman polynomials are also and often called Bergman polynomials.} for the curve $L_1$ (or for
the domain $G_1$).

\begin{thm}\label{thm5} \emph{(Carleman \cite{Carleman})} The following asymptotic formulas hold true:
\begin{equation*}
\kappa_n=\sqrt{n+1}\,[\phi'(\infty)]^{n+1}\left[1+\mathcal{O}\left(\rho^{2n}\right)\right],
\end{equation*}
\begin{equation}\label{eq77}
P_{n}(z)=\sqrt{n+1}\,\phi'(z)[\phi(z)]^n\left[1+h_{n}(z)\right], \qquad z\in \Omega_\rho,
\end{equation}
where $h_n(z)$ converges uniformly to zero as $n\to\infty$ on each $L_r$, $\rho<r<\infty$, with the following rate:
\begin{equation}\label{eq38}
h_n(z)=\left\{\begin{array}{ll}
\mathcal{O}\left(\rho^n\right),\ &\ r> 1, \\
         \mathcal{O}\left(\sqrt{n}\rho^n\right),\ &\ r= 1, \\
         {\displaystyle \mathcal{O}\left(n^{-1/2}(\rho/r)^n\right)},\ &\ \rho<r<1.
       \end{array}\right.
\end{equation}
\end{thm}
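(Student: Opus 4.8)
The plan is to deduce everything from the extremal property of the Carleman polynomials,
\[
\kappa_n^{-2}=\min\Bigl\{\tfrac1\pi\int_{G_1}|Q|^2\,dA:\ Q(z)=z^n+(\text{lower order terms})\Bigr\},
\]
after converting the relevant area integrals into coefficient sums. For a polynomial $Q$ with antiderivative $\widehat{Q}$, Green's theorem gives $\tfrac1\pi\int_{G_1}|Q|^2\,dA=\tfrac1\pi\int_{G_1}|\widehat{Q}'|^2\,dA=\tfrac1{2\pi i}\oint_{L_1}\overline{\widehat{Q}}\,d\widehat{Q}$, and the substitution $z=\psi(w)$ together with the Laurent expansion $\widehat{Q}(\psi(w))=\sum_k r_kw^k$ on $\mathbb{E}_\rho$ (legitimate since $\psi$ is analytic and univalent there) turns this into $\sum_k k|r_k|^2$. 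Here the top coefficient $r_{\deg Q+1}=[\psi'(\infty)]^{\deg Q+1}/(\deg Q+1)$ is prescribed, the coefficients $r_1,\dots,r_{\deg Q}$ enter with positive sign, and the negative-index coefficients $r_{-1},r_{-2},\dots$ enter with a minus sign; in particular $\tfrac1\pi\int_{G_1}|Q|^2\,dA\le[\psi'(\infty)]^{2n+2}/(n+1)$ whenever $r_1=\cdots=r_n=0$.

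For the lower bound on $\kappa_n$ I would test with $Q_n'$, the (monic, degree $n$) derivative of $Q_n:=\tfrac{[\psi'(\infty)]^{n+1}}{n+1}F_{n+1}$, where $F_{n+1}$ is the Faber polynomial of $L_1$. Since $F_{n+1}(\psi(w))=w^{n+1}+g_{n+1}(w)$ with $g_{n+1}$ analytic and vanishing at infinity on $\mathbb{E}_\rho$, the expansion of $Q_n(\psi(w))$ has all intermediate coefficients zero, and the standard Faber estimate for curves analytic up to $\mathbb{E}_\rho$, namely $g_{n+1}(w)=\mathcal{O}(\rho^{n+1})$ on $\{|w|\ge(1+\rho)/2\}$, gives $\tfrac1\pi\int_{G_1}|Q_n'|^2\,dA=\tfrac{[\psi'(\infty)]^{2n+2}}{n+1}\bigl(1-\mathcal{O}(\rho^{2n})\bigr)$. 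Hence $\kappa_n^2\ge(n+1)[\phi'(\infty)]^{2n+2}\bigl(1+\mathcal{O}(\rho^{2n})\bigr)$, and even the clean bound $\kappa_n^2\ge(n+1)[\phi'(\infty)]^{2n+2}$ because the negative-index terms are non-negative.

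The matching upper bound and the pointwise formula form the core of the argument, and here I would work with $P_n$ itself (equivalently, with its own Faber expansion $P_n=\sum_{k=0}^n c_kF_k$, $c_n=\kappa_n[\psi'(\infty)]^n$). The orthogonality relations $\tfrac1\pi\int_{G_1}P_n\overline{z^j}\,dA=\delta_{jn}\kappa_n^{-1}$, $0\le j\le n$, translate via Green's theorem and $z=\psi(w)$ into a triangular system expressing the intermediate Laurent coefficients $r_1,\dots,r_n$ of $\widehat{P_n}(\psi(w))$ through the negative-index coefficients $r_{-m}$ and the prescribed top one. A first, crude mean-value estimate — from $\|P_n\|_{L^2(G_1)}=1$ and the fact that each $L_r$ with $\rho<r<1$ lies at a positive distance inside $G_1$ — gives $|P_n|=\mathcal{O}(1)$ on such $L_r$; extracting $r_{-m}$ by Cauchy's formula on $\mathbb{T}_r$ and letting $r\downarrow\rho$ then controls $\sum_{m\ge1}m|r_{-m}|^2$, and inserting this into the identity
\[
1=\tfrac{\kappa_n^2[\psi'(\infty)]^{2n+2}}{n+1}+\sum_{k=1}^{n}k|r_k|^2-\sum_{m\ge1}m|r_{-m}|^2
\]
first yields $\kappa_n^2\asymp(n+1)[\phi'(\infty)]^{2n+2}$, and then — iterating, now with the sharper control on $P_n$ over the $L_r$ that this two-sided bound makes available — upgrades the rate to $\mathcal{O}(\rho^{2n})$ and pins down $\kappa_n$. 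The very same estimates show that the Laurent coefficients of $P_n(\psi(w))$ agree, up to a geometrically small $\ell^2$-error, with those of $\sqrt{n+1}\,w^n/\psi'(w)=\sqrt{n+1}\,\phi'(\psi(w))[\phi(\psi(w))]^{n}$; a Cauchy estimate on $L_r$ turns this into the pointwise relation (\ref{eq77}). Finally, the three regimes in (\ref{eq38}) appear because on $L_r$ the main term has modulus $\asymp\sqrt{n+1}\,r^n$, so dividing the $\mathcal{O}(\rho^n)$-remainder by it to define $h_n$ is lossless for $r>1$, costs a factor $r^{-n}$ for $\rho<r<1$, and produces the borderline $\sqrt n$ — inherited from $\kappa_n\sim\sqrt{n+1}$ — precisely at $r=1$.

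The step I expect to be the main obstacle is exactly this bootstrap: securing the sharp exponent $\rho^{2n}$, and keeping every mean-value and Faber-remainder estimate uniform in $n$ so that no spurious polynomial-in-$n$ factor enters (\ref{eq38}), forces one to iterate rather than stop at the crude first pass, and to track carefully how the constants in the bounds for $g_m$ degrade as the auxiliary radius approaches $\rho$.
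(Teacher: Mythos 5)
Your overall framework---the extremal characterization of $\kappa_n$, conversion of area integrals into Laurent-coefficient sums, a Faber test polynomial for the lower bound, and a Cauchy--Schwarz passage from $\ell^2$ to pointwise bounds (which is indeed what produces the three regimes in (\ref{eq38}))---is sound, and your clean inequality $\kappa_n^2\geq (n+1)[\phi'(\infty)]^{2n+2}$ is correct as derived. For the record, the paper itself does not reprove this theorem: it cites Carleman and Gaier's presentation, adding only the computation of $\sum_{j=1}^n j\rho^{2j-2-2n}$ needed to sharpen the error on $L_r$, $r>1$, from $\mathcal{O}(\sqrt{n}\rho^n)$ to $\mathcal{O}(\rho^n)$.

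However, the step you yourself flag as the main obstacle is a genuine gap, and the bootstrap you propose does not close it. Your identity $\frac{1}{\pi}\int_{G_1}|Q|^2dA=\sum_k k|r_k|^2$ carries the negative-index coefficients with a minus sign, so positivity of the left-hand side gives no control over them; the only a priori input to your iteration---$|P_n|=\mathcal{O}(1)$ on $L_r$ from the mean-value inequality---yields $\sum_m m|r_{-m}|^2=\mathcal{O}(1)$, not something small in $n$. Tracing your recursion, the bound for $P_n$ on $L_{r'}$ recovered from the coefficient estimates contains an additive term proportional to $\max_{L_r}|P_n|$ with $r<r'$ (the contribution of the negative-index part of the Laurent series), so each pass reproduces the $\mathcal{O}(1)$ constant and the claimed upgrade to $\mathcal{O}(\sqrt{n}\,r^n)$---which is essentially the conclusion of the theorem---never materializes. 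The missing idea in Carleman's argument is to integrate over an annulus rather than the full domain: writing $P_n(\psi(w))\psi'(w)=\sum_{k\leq n}a_kw^k$ on $|w|>\rho$, with $a_n=\kappa_n[\psi'(\infty)]^{n+1}$, one has for every $r\in(\rho,1)$
\[
\frac{1}{\pi}\int_{G_1\setminus\overline{G}_r}|P_n|^2\,dA=\sum_{k\leq n}|a_k|^2\,\frac{1-r^{2k+2}}{k+1}\;\leq\;\frac{1}{\pi}\int_{G_1}|P_n|^2\,dA=1,
\]
where \emph{every} summand is nonnegative (for $k\leq-2$ numerator and denominator are both negative; for $k=-1$ read $2\log(1/r)$). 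Your extremal lower bound gives $|a_n|^2(1-r^{2n+2})/(n+1)\geq 1-r^{2n+2}$, so letting $r\downarrow\rho$ yields in one stroke $\sum_{k\neq n}|a_k|^2(1-\rho^{2k+2})/(k+1)\leq\rho^{2n+2}$ and $\kappa_n^2\leq(n+1)[\phi'(\infty)]^{2n+2}/(1-\rho^{2n+2})$, i.e.\ the sharp rate with no iteration; your Cauchy--Schwarz step on $\mathbb{T}_R$ then delivers (\ref{eq38}) exactly. A minor further point: your Faber-remainder claim $g_{n+1}(w)=\mathcal{O}(\rho^{n+1})$ is only justified with an arbitrary $r>\rho$ in place of $\rho$; fortunately the clean bound $\kappa_n^2\geq(n+1)[\phi'(\infty)]^{2n+2}$, which needs no remainder estimate, is all the lower bound the proof requires.
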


Note that, by the maximum modulus principle for
analytic functions, the estimates given for
$h_n(z)$ on $L_r$ are indeed valid on
$\overline{\Omega}_r$. Carleman \cite[Satz
IV]{Carleman} stated and proved this theorem as
holding uniformly in the exterior of the curve
$L_1$ with the estimate
$h_n(z)=\mathcal{O}\left(\sqrt{n}\rho^n\right)$.
However, without any variation, his proof equally
yields Theorem \ref{thm5} in the way it has been
stated above (see the first paragraph of
Subsection \ref{subsection3.1} below preceding
the proof of Theorem \ref{thm3}).

So far as the author can learn, Carleman's
formula is the only known result of substantial
generality that neatly describes the asymptotic
behavior of the polynomials $P_n$.  Notice that,
since both $\phi$ and $\phi'$ do not vanish on
$\Omega_\rho$, Theorem \ref{thm5} implies that
the zeros of $P_n$ must accumulate, in the limit,
on $\mathbb{C}\setminus\Omega_\rho$. That is to say,  any closed subset of $\Omega_\rho$ contains zeros of at most finitely many polynomials $P_n$. Our
investigation focuses precisely in understanding
how Carleman polynomials and their zeros behave
in $\mathbb{C}\setminus\Omega_\rho$.

We now discuss a well-known important relation
between Carleman polynomials and the conformal
maps of $G_1$ onto the unit disk $\mathbb{D}_1$. Let $f$ be a
function in the Bergman space
$\mathcal{B}^2(G_1)$ of the domain $G_1$, that
is, $f$ is and analytic function defined on $G_1$
satisfying
\[
\frac{1}{\pi}\int_{G_1}|f(\zeta)|^2dA(\zeta)<\infty,
\]
and let $\tau=\tau(f)\geq 1$ be the largest
number such that $f$ has an analytic continuation
to $G_\tau$. It is well-known \cite[pp.
128-131]{Walsh} that
\begin{equation}\label{eq62}
f(\zeta)=\sum_{k=0}^\infty \alpha_k(f)P_k(\zeta),
\quad \zeta\in G_\tau,
\end{equation}
where
\[
\alpha_k(f)=\frac{1}{\pi}\int_{G_1}f(\zeta)\overline{P_k(\zeta)}dA(\zeta),
\quad k=0,1,2,\ldots,
\]
and the series in the right-hand side of
(\ref{eq62}) converges locally uniformly to $f$
on $G_\tau$.

Let us then apply this result to the so-called
Bergman kernel function $K(\zeta,z)$ of the space
$\mathcal{B}^2(G_1)$ (see \cite[pp.
30-33]{Gaier1}), which has the reproducing
property
\begin{equation}\label{eq68}
f(z)=\frac{1}{\pi}\int_{G_1}f(\zeta)\overline{K(\zeta,z)}dA(\zeta)
\quad \forall\,f\in \mathcal{B}^2(G_1),\ z\in
G_1,
\end{equation}
and can be expressed in terms of a conformal map
$\varphi$ of $G_1$ onto $\mathbb{D}_1$ as
\begin{equation}\label{eq67}
K(\zeta,z)=\frac{\overline{\varphi'(z)}\varphi'(\zeta)}{\left[1-\overline{\varphi(z)}\varphi(\zeta)\right]^2}\,,\quad
\zeta,\, z\in G_1.
\end{equation}

Because $L_1$ is a Jordan curve, any such map
$\varphi$ can be extended as a continuous and
bijective function $\varphi:\overline{G}_1\to
\overline{\mathbb{D}}_1$. Moreover, being $L_1$
analytic, $\varphi$ has a meromorphic
continuation to $G_{1/\rho}$, which is indeed
given by
\begin{equation}\label{eq66}
\varphi(z)=\frac{1}{\overline{\varphi\left(z^*\right)}},
\qquad z\in G_{1/\rho}\setminus \overline{G}_1\,,
\end{equation}
where
\begin{equation}\label{eq70}
z^*=\psi\left(1\big/\overline{\phi(z)}\right), \quad
z\in G_{1/\rho}\cap\Omega_\rho,
\end{equation}
is the Schwarz reflection  about $L_1$ of any
point $z\in G_{1/\rho}\cap\Omega_\rho$ (see
\cite{Davis} for details).

It follows from these considerations and relation
(\ref{eq67}) that for every $z\in G_1$,
$K(\cdot,z)$ is analytic on $\overline{G}_1$, and since
by (\ref{eq68}),
$\alpha_k(K(\cdot,z))=\overline{P_k(z)}$, we obtain
from (\ref{eq62}) that
\begin{equation}\label{eq71}
\frac{\overline{\varphi'(z)}\varphi'(\zeta)}{\left[1-\overline{\varphi(z)}\varphi(\zeta)\right]^2}=\sum_{k=0}^\infty
\overline{P_k(z)}P_k(\zeta), \quad \zeta\in \overline{G}_1,\
z\in G_1,
\end{equation}
with the series converging uniformly in $\zeta\in
\overline{G}_1$ for each fixed $z\in G_1$.

As we shall see soon, it turns out that the
behavior of $P_n$ in $G_1$ is closely related to
a symmetric ``meromorphic kernel'' $L(\zeta,z)$
that well resembles the Bergman kernel, namely, the function of two
variables
\begin{equation}\label{eq69}
L(\zeta,z):=\frac{\varphi'(\zeta)\varphi'(z)}{[\varphi(\zeta)-\varphi(z)]^2},\quad
\zeta,\,z\in G_1.
\end{equation}

That $L(\zeta,z)$ does not depend on the choice
of the conformal map $\varphi$ can be easily
established from the fact that any two conformal
maps $\varphi$ and $\varphi_1$ of $G_1$ onto
$\mathbb{D}_1$ are related through a M\"obius
transformation, that is,
\[
\varphi(z)=e^{i\theta}\frac{\varphi_1(z)-\varphi_1(z_0)}{1-\overline{\varphi_1(z_0)}\varphi_1(z)},\qquad
e^{i\theta}=\varphi'(z_0)\left(1-|\varphi_1(z_0)|^2\right)/\varphi'_1(z_0),
\]
where $z_0$ is that point of $G_1$ mapped by
$\varphi$ onto $0$.

If we specifically choose a map $\varphi$ that
does not vanish on $G_1\cap\Omega_\rho$, then in
view of (\ref{eq66}) and (\ref{eq70}), $\varphi$
has a one-to-one analytic continuation to
$G_{1/\rho}$, so that $L(\zeta,z)$ can be
extended as a continuous function
\[
L(\zeta,z):G_{1/\rho}\times G_{1/\rho}\to
\overline{\mathbb{C}}
\]
such that for every fixed $z\in G_{1/\rho}$,
$L(\cdot,z)$ is analytic on $G_{1/\rho}\setminus
\{z\}$ with a Laurent expansion at $z$ of the
form
\[
L(\zeta,z)=\frac{1}{(\zeta-z)^2}+a_0+a_1(\zeta-z)+a_2(\zeta-z)^2+\cdots\,.
\]

We combine Theorem \ref{thm5} with relation
(\ref{eq71}) to deduce the following theorem:

\begin{thm}\label{thm3} With the notations above, we
have that
\begin{equation}\label{eq2}
 P_{n}(z)=\frac{(n+1)^{-1/2}}{2\pi
 i}
 \oint_{L_1}L(\zeta,z)[\phi(\zeta)]^{n+1}d\zeta
 +\epsilon_n(z),\quad z\in G_1,
\end{equation}
where the functions $\epsilon_n(z)$,
$n=0,1,2,\ldots$, are analytic on $G_{1/\rho}$
and have the following property: if $E\subset
G_{1/\rho}$ is such that for some
$0\leq\tau<1/\rho$,
\begin{equation*}
P_n(z)=\mathcal{O}\left(\sqrt{n}\tau^n\right)
\end{equation*}
uniformly on $E$ as $n\to\infty$, then
\begin{equation*}
\epsilon_n(z)=\mathcal{O}\left(\sqrt{n}(\tau\rho)^n\right)
\end{equation*}
uniformly on $E$ as $n\to\infty$.
\end{thm}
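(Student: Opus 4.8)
\emph{Plan of proof.} Write $I_n(z)=\dfrac{(n+1)^{-1/2}}{2\pi i}\oint_{L_1}L(\zeta,z)[\phi(\zeta)]^{n+1}d\zeta$, so that formula (\ref{eq2}) is simply the \emph{definition} $\epsilon_n:=P_n-I_n$ on $G_1$; what has to be proved is that $\epsilon_n$ continues analytically to $G_{1/\rho}$ with the stated rate. The starting point is relation (\ref{eq71}) read \emph{on the curve $L_1$ itself}, together with the pointwise identity
\[
L(\zeta,z)\,d\zeta=-\overline{K(\zeta,z)}\,d\bar\zeta,\qquad \zeta\in L_1,\ z\in G_1,
\]
which follows from $|\varphi(\zeta)|=1$ on $L_1$ (hence $\overline{\varphi(\zeta)}=1/\varphi(\zeta)$) by comparing (\ref{eq67}) and (\ref{eq69}) in a parametrization $\varphi(\zeta(\theta))=e^{i\theta}$. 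Since also $|\phi|=1$ on $L_1$, so $[\phi(\zeta)]^{n+1}=\overline{[\phi(\zeta)]^{-(n+1)}}$ there, this gives
\[
\oint_{L_1}L(\zeta,z)[\phi(\zeta)]^{n+1}d\zeta=-\,\overline{\oint_{L_1}K(\zeta,z)[\phi(\zeta)]^{-(n+1)}d\zeta}=-\sum_{k\ge0}P_k(z)\,\overline{c_{k,n}},\qquad c_{k,n}:=\oint_{L_1}P_k(\zeta)[\phi(\zeta)]^{-(n+1)}d\zeta,
\]
the term by term integration being licensed by the uniform convergence on $\overline G_1$ stated after (\ref{eq71}).

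Next I would evaluate and estimate the numbers $c_{k,n}$. Pushing $L_1$ outward to $|\phi|=R\to\infty$ (the integrand is analytic on $\Omega_1$), $c_{k,n}$ is $2\pi i$ times the coefficient of $\zeta^{-1}$ in the expansion of $P_k(\zeta)[\phi(\zeta)]^{-(n+1)}$ at $\infty$; consequently $c_{k,n}=0$ for $k<n$, while $c_{n,n}=2\pi i\,\kappa_n[\psi'(\infty)]^{n+1}$. For $k>n$ I would insert Carleman's formula (\ref{eq77}), $P_k(\zeta)=\sqrt{k+1}\,\phi'(\zeta)[\phi(\zeta)]^k[1+h_k(\zeta)]$ on $L_1$; the part without $h_k$ contributes $\sqrt{k+1}\oint_{L_1}\phi'(\zeta)[\phi(\zeta)]^{k-n-1}d\zeta=0$ (it is $d$ of a single valued function on $\Omega_1$), so
\[
c_{k,n}=\sqrt{k+1}\oint_{L_1}\phi'(\zeta)[\phi(\zeta)]^{k-n-1}h_k(\zeta)\,d\zeta .
\]
The decisive step is to deform this last contour \emph{outward} to $L_\sigma$ with $1<\sigma<1/\rho$ and to use the bound $h_k=\mathcal{O}(\rho^k)$ valid there (the $r>1$ case of (\ref{eq38})); this produces $|c_{k,n}|\le C_\sigma\sqrt{k+1}\,\sigma^{-n}(\sigma\rho)^k$ for $k>n$, with $C_\sigma$ independent of $n,k$. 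Deforming inward instead would only supply the weaker $\mathcal{O}(k^{-1/2}\rho^k)$ together with a positive power of $\sigma^{-1}>1$, and would cost a factor $\sqrt n$ in the final estimate.

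Combining these facts with the first asymptotics of Theorem \ref{thm5}, $\kappa_n[\psi'(\infty)]^{n+1}=\sqrt{n+1}\,[1+\mathcal{O}(\rho^{2n})]$, and isolating the $k=n$ term, one gets for $z\in G_1$
\[
\epsilon_n(z)=P_n(z)-I_n(z)=-\delta_n\,P_n(z)+\frac{(n+1)^{-1/2}}{2\pi i}\sum_{k>n}\overline{c_{k,n}}\,P_k(z),\qquad \delta_n=\mathcal{O}(\rho^{2n}).
\]
Since Carleman's formula on the exterior curves $L_r$ $(1\le r<1/\rho)$ and the maximum modulus principle on $\overline G_1$ give $|P_k(z)|\le C_K\sqrt{k+1}\,R_K^{\,k}$ on any compact $K\subset G_{1/\rho}$ with $R_K<1/\rho$, the estimate on $c_{k,n}$ (for $\sigma$ close enough to $1$) makes the series above converge locally uniformly on $G_{1/\rho}$; hence $\epsilon_n$ extends analytically there and the displayed identity persists on $G_{1/\rho}$. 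Finally, if $P_n=\mathcal{O}(\sqrt n\,\tau^n)$ uniformly on $E$ with $\tau<1/\rho$, I would choose $\sigma\in\bigl(1,\min\{1/\rho,\,1/(\rho\tau)\}\bigr)$, so that $\sigma\rho\tau<1$; then on $E$
\[
\sum_{k>n}|c_{k,n}|\,|P_k(z)|\le C\,\sigma^{-n}\sum_{k>n}(k+1)(\sigma\rho\tau)^k\le C'(n+1)\,\sigma^{-n}(\sigma\rho\tau)^n=C'(n+1)(\tau\rho)^n ,
\]
which after multiplication by $(n+1)^{-1/2}$ is $\mathcal{O}(\sqrt n\,(\tau\rho)^n)$; and $|\delta_n P_n(z)|=\mathcal{O}(\rho^{2n}\sqrt n\,\tau^n)=\mathcal{O}(\sqrt n\,(\tau\rho)^n)$ because $\rho<1$. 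Adding the two bounds gives $\epsilon_n=\mathcal{O}(\sqrt n(\tau\rho)^n)$ uniformly on $E$.

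The main obstacle is the estimate for $c_{k,n}$ when $k>n$: one has to recognize both the exact cancellation of the leading Carleman term against $d([\phi]^{k-n})$ and, less obviously, that the residual contour must be pushed to an \emph{exterior} level curve $L_\sigma$, where $h_k$ decays like $\rho^k$ with no $\sqrt k$; a careless contour choice loses the decisive factor $\sqrt n$ and misses the claimed rate. The secondary nuisances are the bookkeeping of complex conjugates in the first step and verifying the locally uniform convergence that upgrades analyticity from $G_1$ to $G_{1/\rho}$.
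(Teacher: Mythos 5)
Your proposal is correct and follows essentially the same route as the paper's proof: the conjugation identity on $L_1$ turning the $L$-kernel integral into $-\overline{\oint_{L_1}K(\zeta,z)[\phi(\zeta)]^{-(n+1)}d\zeta}$, the expansion of $K$ in the $P_k$'s, the vanishing of the coefficients for $k<n$, and, decisively, the outward deformation to an exterior level curve where $h_k=\mathcal{O}(\rho^k)$ are precisely the paper's relations (\ref{eq6}), (\ref{eq78}) and (\ref{eq39}), with your $\overline{c_{k,n}}$ equal to $-2\pi i\sqrt{k+1}\,\alpha_{k-n,n}$. The only difference is cosmetic: you expand the kernel first and insert Carleman's formula inside the coefficients $c_{k,n}$, whereas the paper inserts Carleman's formula into the series before integrating.
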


A simple consequence of Theorem \ref{thm3} is the
following improvement in Carleman's formula
regarding the error estimate:
\begin{cor}\label{cor1} Indeed, for the functions $h_n(z)$ defined by (\ref{eq77}), we have that $
h_n(z)=\mathcal{O}(\rho^n)$ uniformly in $z\in L_1$ as $n\to\infty$.
\end{cor}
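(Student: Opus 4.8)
The plan is to recognize, inside the integral representation~(\ref{eq2}), the leading term $\sqrt{n+1}\,\phi'(z)[\phi(z)]^n$ of Carleman's formula~(\ref{eq77}) and to bound everything else. Write $Q_n(z)$ for the integral on the right of~(\ref{eq2}). First I would pin down the size of $\epsilon_n$ on $L_1$: there $|\phi(z)|=1$ while $\phi'$ is continuous and nonvanishing, so~(\ref{eq77}) and the $r=1$ case of~(\ref{eq38}) give $P_n(z)=\mathcal O(\sqrt n)$ uniformly on $L_1$; since $L_1\subset G_{1/\rho}$ and $1<1/\rho$, Theorem~\ref{thm3} applied with $E=L_1$ and $\tau=1$ then yields
\[
\epsilon_n(z)=\mathcal O\!\left(\sqrt n\,\rho^{\,n}\right)\qquad\text{uniformly on }L_1.
\]

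Next I would isolate the main term. Fix $r\in(\rho,1)$ and take $z$ in the annular region enclosed between $L_r$ and $L_1$; pushing the contour $L_1$ inward onto $L_r$ and using that $L(\,\cdot\,,z)$ is analytic on $G_{1/\rho}$ apart from a double pole at $\zeta=z$ with $\operatorname{Res}_{\zeta=z}L(\zeta,z)[\phi(\zeta)]^{n+1}=(n+1)\phi'(z)[\phi(z)]^n$, the residue theorem gives
\[
Q_n(z)=\sqrt{n+1}\,\phi'(z)[\phi(z)]^n+R_n(z),\qquad R_n(z):=\frac{(n+1)^{-1/2}}{2\pi i}\oint_{L_r}L(\zeta,z)[\phi(\zeta)]^{n+1}\,d\zeta .
\]
The right-hand side is analytic in $z$ throughout the region between $L_r$ and $L_{1/\rho}$, which contains $L_1$, so combining with $P_n=Q_n+\epsilon_n$ (valid on $G_1$) and continuing analytically I obtain, for every $z\in L_1$,
\[
\sqrt{n+1}\,\phi'(z)[\phi(z)]^n\,h_n(z)=P_n(z)-\sqrt{n+1}\,\phi'(z)[\phi(z)]^n=R_n(z)+\epsilon_n(z).
\]
Because $|\phi(z)|=1$ and $|\phi'(z)|$ is bounded below on the compact curve $L_1$, the corollary reduces to proving $R_n(z)=\mathcal O(\sqrt n\,\rho^{\,n})$ uniformly on $L_1$.

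The hard part is this last estimate: the crude bound on $L_r$ only gives $R_n(z)=\mathcal O(n^{-1/2}r^{\,n+1})$, and one cannot simply let $r\downarrow\rho$ because $\phi$ may fail to continue analytically across $L_\rho$. The way around it uses three observations. (i) For $z\in L_1$ the integrand of $R_n$ has no pole in the region between $L_\rho$ and $L_1$, so the value $R_n(z)$ does not depend on $r\in(\rho,1)$. (ii) A standard area estimate controls the lengths of the level curves: parametrizing $L_r$ by $\psi$ and passing to polar coordinates, $\int_\rho^1\operatorname{length}(L_r)\,dr=\int_{\rho<|w|<1}|\psi'(w)|\,dA(w)\le\big(\pi(1-\rho^2)\,\operatorname{Area}\psi(\{\rho<|w|<1\})\big)^{1/2}<\infty$ by Cauchy--Schwarz, since $\psi$ maps the annulus $\{\rho<|w|<1\}$ into the bounded set $G_{1/\rho}$; hence for each small $\delta>0$ there is $r_\delta\in(\rho,\rho+\delta)$ with $\operatorname{length}(L_{r_\delta})\le M/\delta$, $M$ being the value of that integral. (iii) For a fixed $r_0\in(\rho,1)$ the set $\overline{\bigcup_{\rho<r\le r_0}L_r}$ is a compact subset of $G_{1/\rho}$, so $|L(\zeta,z)|\le C$ for $\zeta$ there and $z\in L_1$. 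Using these with the contour $L_{r_\delta}$,
\[
|R_n(z)|\le\frac{(n+1)^{-1/2}}{2\pi}\,C\,\frac{M}{\delta}\,(\rho+\delta)^{\,n+1},
\]
and the choice $\delta=\rho/n$ makes $(\rho+\delta)^{\,n+1}\le 4\rho^{\,n+1}$ while $1/\delta=n/\rho$, so $R_n(z)=\mathcal O(\sqrt n\,\rho^{\,n})$ uniformly on $L_1$. Inserting this together with the bound on $\epsilon_n$ from the first step into the displayed identity and dividing through by $\sqrt{n+1}\,\phi'(z)[\phi(z)]^n$ yields $h_n(z)=\mathcal O(\rho^{\,n})$ uniformly on $L_1$. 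The only genuine difficulty is step (iii)--(ii): turning the family of bounds ``$\mathcal O(r^{\,n})$ for every $r>\rho$'' into the single sharp bound ``$\mathcal O(\rho^{\,n})$'', which forces one to track the length of $L_r$ as $r\downarrow\rho$.
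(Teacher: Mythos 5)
Your proof is correct, but the way you handle the crucial step --- passing from ``$\mathcal{O}(r^{n})$ for each $r>\rho$'' to ``$\mathcal{O}(\rho^{n})$'' --- is genuinely different from the paper's. The paper first integrates by parts in (\ref{eq2}) to replace the double-pole kernel $L(\zeta,z)$ by the simple-pole kernel $\varphi'(z)/[\varphi(\zeta)-\varphi(z)]$ with the factor $\sqrt{n+1}$ pulled out (this is (\ref{eq28})/(\ref{eq40})), extracts the main term $\sqrt{n+1}\,\phi'(z)[\phi(z)]^n$ as the residue of a \emph{simple} pole, and then estimates the remaining integral over $L_r$ by the substitution $\zeta=\psi(t)$: the factor $\phi'(\zeta)$ produced by the integration by parts cancels the arc-length distortion exactly, the contour becomes the circle $\mathbb{T}_r$ of length $2\pi r$, and the bound $r^{n+1}/\dist(\mathbb{T}_1,\varphi(L_{r'}))$ is uniform in $r\in(\rho,r')$, so one simply lets $r\to\rho$. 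You instead keep the double-pole kernel, take the main term from the residue at the double pole, and then must confront the fact that $\mathrm{length}(L_r)$ may blow up as $r\downarrow\rho$ (it does whenever $L_\rho$ has corners with $\lambda_k<1$, and in general $\psi$ need not even extend continuously to $\mathbb{T}_\rho$); your Cauchy--Schwarz area estimate $\int_\rho^1\mathrm{length}(L_r)\,dr<\infty$ combined with the $n$-dependent choice of radius $r_{\rho/n}$ is a correct and self-contained way around this, at the harmless cost of an extra factor of $n$ that still lands you at $R_n=\mathcal{O}(\sqrt{n}\,\rho^n)$, the same order as the paper's remainder. The paper's route is shorter because the change of variables makes the length problem disappear; yours is more robust in that it never needs the integration-by-parts form of the integrand and would apply to any bounded kernel. (Two small points: your identity for $R_n$ tacitly uses that the Laurent expansion of $L(\cdot,z)$ at $z$ has no simple-pole term, which the paper records; and the choice $\delta=\rho/n$ assumes $\rho>0$, the case $\rho=0$ being the trivial one of a circle.)
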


From Theorem \ref{thm5} and the maximum modulus
principle for analytic functions we see that if
$\rho<r<1$, then
$P_n(z)=\mathcal{O}(\sqrt{n}r^n)$ uniformly in
$z\in \overline{G}_r$ as $n\to\infty$.
Consequently, it follows from Theorem \ref{thm3}
after integrating by parts over $L_1$ and making
the change of variables $\zeta=\psi(t)$ that
\begin{eqnarray}\label{eq28}
P_{n}(z)&=& \frac{\sqrt{n+1}\,\varphi'(z)}{2\pi
 i} \oint_{L_1}
 \frac{\phi'(\zeta)[\phi(\zeta)]^{n}d\zeta}{\varphi(\zeta)-\varphi(z)}+\epsilon_n(z)\nonumber\\
&=&\frac{\sqrt{n+1}\,\varphi'(z)}{2\pi
 i}\oint_{\mathbb{T}_1}\frac{t^{n}dt}{\varphi(\psi(t))-\varphi(z)}
+\mathcal{O}\left(\sqrt{n}(r\rho)^n\right),\quad
z\in\overline{G}_r, \quad \rho<r<1.
\end{eqnarray}

To illustrate how effectively Theorem \ref{thm3}
can be exploited for deriving finer asymptotic
results, let us consider the situation in which
the boundary $L_\rho$ of $\Omega_\rho$ is a
piecewise analytic curve without cusps or smooth
corners (see, however, Remark \ref{rem1} below).
More precisely, we shall assume $\psi$ satisfies
Conditions A.1 and A.2 to be stated in what
follows.

We define an \emph{analytic arc} as being the
image of the interval $[0,1]$ by a function
$f(t)$ analytic in $[0,1]$ such that
$f'(t)\not=0$ for all $t\in [0,1]$ and
$f(t_1)\not=f(t_2)$ for all $0<t_1<t_2< 1$. The
endpoints of the arc are $f(0)$ and $f(1)$, which
may coincide. We call the arc \emph{simple} if
$f$ is one-to-one on $[0,1]$. Notice that,
according to this definition, an analytic Jordan
curve is also an analytic arc. Our first
assumption is:

\begin{enumerate}
\item[\textbf{A.1:}] The map $\psi$ has a continuous extension to $\overline{\mathbb{E}}_\rho$
and there are $s\geq 1$ distinct points
$\omega_1,\omega_2,\ldots,\omega_s$ in $\mathbb{T}_\rho$
such that if $w_1\not=w_2$ are two points of $
\mathbb{T}_\rho$ for which $\psi(w_1)=\psi(w_2)$,
then $\{w_1,w_2\}\subset \{\omega_1,\ldots,\omega_s\}$.
Moreover, if $\ell$ is any of the $s$ open
circular arcs that compose
$\mathbb{T}_\rho\setminus
\{\omega_1,\omega_2,\ldots,\omega_s\}$, say with endpoints
$\omega_k$, $\omega_j$, then
$\psi\left(\overline{\ell}\right)$ is an analytic arc
with endpoints $\psi(\omega_k)$, $\psi(\omega_j)$ (see
Figure \ref{fig2} below).
\end{enumerate}

Thus, $L_\rho=\partial \Omega_\rho$ is a  piecewise analytic
curve. Let $z\in L_\rho$ and
$w=\rho e^{i\Theta}$ be such that $z=\psi(w)$. The
\emph{exterior angle at $z$ relative to $w$} is
defined to be that angle $\alpha\in [0, 2\pi]$
such that
\[
\arg\left[\psi\left(\rho e^{i\theta}\right)-z\right]\to\left\{\begin{array}{ll}
                                   \beta &\ \,\mathrm{as}\ \theta\to \Theta-\,,\\
                                   \beta+\alpha &\ \,\mathrm{as}\
                                   \theta\to\Theta+\,.
                                 \end{array}
\right.
\]

Let
\[
z_k:=\psi(\omega_k), \quad k\in\{1,2,\ldots,s\},
 \]
be ``the corners of $L_\rho$". Notice that they are not necessarily pairwise distinct.

For each $ k\in\{1,2,\ldots,s\}$, let
$\lambda_k\in [0,2]$ be such that $\lambda_k\pi$
is the exterior angle at $z_k$ relative to
$\omega_k$. Our second
assumption on $\psi$ is:
\begin{enumerate}
\item[\textbf{A.2:}] $\lambda_k\not\in \{0,1,2\}$ for every $
k\in\{1,2,\ldots,s\}$ (i.e., $L_\rho$ has no
cusps or smooth corners).
\end{enumerate}

\begin{figure}
\centering
\includegraphics[scale=.55]{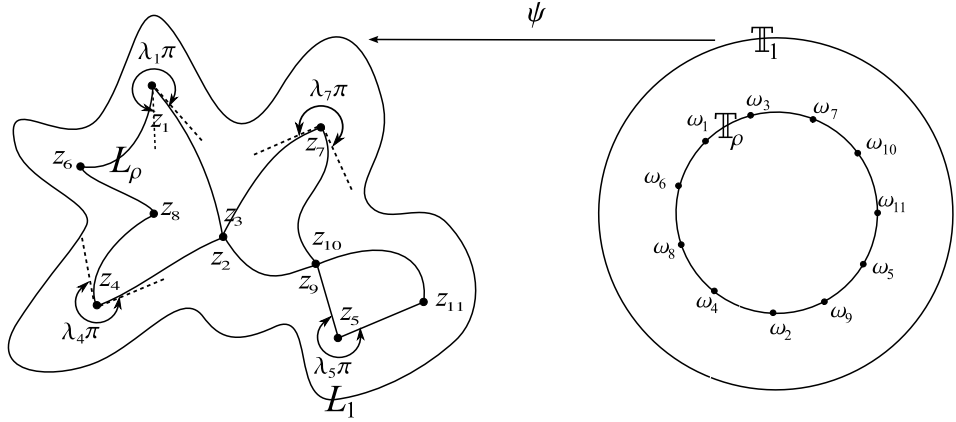}
\caption{Illustration of a map $\psi$ satisfying
Conditions A.1 and A.2.}\label{fig2}
\end{figure}

We assume that the $\omega_k$'s have been indexed in
such a way that for some $u\in\{1,2,\ldots,s\}$,
\[
\lambda_1=\lambda_2=\cdots=\lambda_u<\lambda_{u+1}\leq\cdots\leq
\lambda_s\,.
\]

Let
$\Theta_1,\Theta_2,\ldots,\Theta_s$ be the
arguments of the numbers $\omega_k$, that is,
\[
\omega_k=\rho e^{i\Theta_k},\quad 0\leq\Theta_k<2\pi,\quad 1\leq
k\leq s\,.
\]
By a well-known result of Lehman (see
\cite{Lehman} and Subsection
\ref{lehmanexpansion} below), the map $\psi(w)$
admits an asymptotic expansion about each
$\omega_k$. In particular, for each
$k\in\{1,2,\ldots,s\}$, the limit
\begin{equation*}
A_k:=\lim_{\underset{w\in\mathbb{E}_\rho}{w\to
\omega_k}}\frac{\psi(w)-z_k}{(w-\omega_k)^{\lambda_k}}
\end{equation*}
is a complex number different from zero.
Obviously, the value of this number $A_k$ depends
on the branch chosen for the function
$(w-\omega_k)^{\lambda_k}$ in a
$\delta$-neighborhood of the form
$\{w\in\mathbb{E}_\rho:0<|w-\omega_k|<\delta\}$.
Here we choose the one corresponding to the
branch of the argument
\[
\Theta_k-\pi<\arg(w-\omega_k)<\Theta_k+\pi,\quad
w\in \mathbb{C}\setminus\{t\omega_k:t\leq 1\}.
\]

The symbol $\binom{a}{b}$ stands for the generalized
binomial coefficient, i.e.,
\[
\binom{a}{b}:=\frac{\Gamma(a+1)}{\Gamma(b+1)\Gamma(a-b+1)},
\]
where $\Gamma$ denotes the Euler gamma function.

The behavior of $P_n$ inside $G_\rho$ is given
first. It strongly reflects the dependance of
$P_n$ on both $\phi$ and $L(\zeta,z)$.

\begin{thm}\label{thm1} For every $z\in G_\rho$,
we have
\begin{equation}\label{eq56}
\frac{P_n(z)}{\sqrt{n+1}\binom{n}{
-\lambda_1-1}\rho^{n+1+\lambda_1}}
=-\sum_{k=1}^u
L(z_k,z)A_ke^{i(n+1+\lambda_1)\Theta_k} +R_n(z),
\end{equation}
where
\begin{equation}\label{eq12}
R_n(z)=
 \left\{\begin{array}{ll}
                      \mathcal{O}\left(n^{-\lambda_1}\right),\ &\mathrm{if}\  0<\lambda_1<1,\ \lambda_1\not=1/2,\\
                      \mathcal{O}\left(n^{-1}\log n\right),\ &\mathrm{if}\ \lambda_1=1/2, \\
                      \mathcal{O}\left(n^{-1}\right),\ &\mathrm{if}\ 1<\lambda_1<2, \\
                      \end{array}\right.+
\left\{\begin{array}{ll}
\mathcal{O}\left(n^{\lambda_1-\lambda_{u+1}}\right),\
&\mathrm{if}\ u<s, \\
0,\ &\mathrm{if}\ u=s,
\end{array}\right.
\end{equation}
uniformly as $n\to\infty$ on compact subsets of
$G_\rho$.
\end{thm}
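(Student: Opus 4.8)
The plan is to take the integral representation (\ref{eq28}) as the starting point -- it already incorporates Theorem \ref{thm3} together with Carleman's bounds -- and to read off the asymptotics of the remaining contour integral by a Darboux-type (singularity-analysis) argument localized at the corners $\omega_1,\dots,\omega_s$ of $\mathbb{T}_\rho$. Given a compact set $K\subset G_\rho$, I would first fix $r\in(\rho,1)$ small enough that $K\subset\overline{G}_r$ and rewrite (\ref{eq28}) as
\[
P_n(z)=\sqrt{n+1}\,\varphi'(z)\,J_n(z)+\mathcal{O}\!\left(\sqrt n\,(r\rho)^n\right),
\qquad
J_n(z):=\frac{1}{2\pi i}\oint_{\mathbb{T}_r}\frac{t^{n}\,dt}{\varphi(\psi(t))-\varphi(z)}\,,
\]
uniformly on $K$. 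The crucial observation is that $J_n(z)$ is precisely the Laurent coefficient of index $-(n+1)$ of the function $\tilde F_z(t):=[\varphi(\psi(t))-\varphi(z)]^{-1}$, which is analytic in the annulus $\rho<|t|<1/\rho$ (because $\psi(t)\neq z$ there, and the chosen $\varphi$ continues holomorphically and injectively to $G_{1/\rho}$); in particular $J_n(z)$ does not depend on $r\in(\rho,1)$, and since $\rho<1$ the error term above is exponentially smaller than the quantity we are after.

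Second, I would pin down the behaviour of $\tilde F_z$ near $\mathbb{T}_\rho$. Because every arc $\psi(\overline\ell)$ is an analytic arc (Condition A.1), the Schwarz reflection principle continues $\psi$ analytically across $\mathbb{T}_\rho\setminus\{\omega_1,\dots,\omega_s\}$; shrinking the neighbourhood so that the reflected image of $\psi$ stays inside $G_{1/\rho}$ and away from $K$, the function $\tilde F_z$ then extends analytically to a neighbourhood of $\mathbb{T}_\rho$ minus the points $\omega_k$, uniformly for $z\in K$. Feeding Lehman's expansion of $\psi$ at $\omega_k$ (Subsection \ref{lehmanexpansion}) into $\varphi(\psi(t))-\varphi(z)=a_k+\varphi'(z_k)A_k(t-\omega_k)^{\lambda_k}+\cdots$, with $a_k:=\varphi(z_k)-\varphi(z)\neq0$, and inverting, yields the local description
\[
\tilde F_z(t)=\frac{1}{a_k}-\frac{\varphi'(z_k)A_k}{a_k^{2}}(t-\omega_k)^{\lambda_k}+(\text{higher-order terms}),
\]
where, since $L(z_k,z)=\varphi'(z_k)\varphi'(z)/a_k^{2}$, the coefficient of the leading singular term $(t-\omega_k)^{\lambda_k}$ equals $-A_k\,L(z_k,z)/\varphi'(z)$; the expansion continues with terms $(t-\omega_k)^{\mu}(\log(t-\omega_k))^{\ell}$ of exponents $\mu>\lambda_k$ prescribed by Lehman's expansion, a logarithm occurring precisely at the resonant exponents, which is the mechanism that forces a $\log n$ in the case $\lambda_1=1/2$.

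Third comes the extraction. I would evaluate $J_n(z)$ by deforming $\mathbb{T}_r$ inward to a contour lying on a circle $\mathbb{T}_{\rho-\eta}$ away from the $\omega_k$'s and making Hankel-type loops of radius $\asymp 1/n$ around each $\omega_k$ along the branch cut of $(t-\omega_k)^{\lambda_k}$; this is legitimate thanks to the $\Delta$-shaped domains of analyticity constructed in the previous step. The circular-arc part contributes $\mathcal{O}((\rho-\eta)^{n})$, which is negligible; each Hankel loop turns a term $c\,(t-\omega_k)^{\mu}$ of the local expansion into a contribution of size $c\cdot(\text{constant depending only on }\mu,\ \text{proportional to }1/\Gamma(-\mu))\cdot\omega_k^{\,n+1+\mu}\,n^{-\mu-1}$, exactly as in the classical identity $[x^{N}](1-x)^{\mu}=\binom{\mu}{N}(-1)^{N}$, a logarithmic term contributing the extra $\log n$. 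Keeping only the leading terms at the corners with $\lambda_k=\lambda_1$, writing $\omega_k=\rho e^{i\Theta_k}$ and using the branch of $(w-\omega_k)^{\lambda_k}$ chosen to define $A_k$, so that the $1/\Gamma(-\lambda_1)$ hidden in $\binom{n}{-\lambda_1-1}$ is matched and $\binom{n-\lambda_1}{n+1}/\binom{n}{-\lambda_1-1}=1+\mathcal{O}(1/n)$, one finds after dividing $P_n(z)$ by $\sqrt{n+1}\binom{n}{-\lambda_1-1}\rho^{n+1+\lambda_1}$ that the main term is $-\sum_{k=1}^{u}L(z_k,z)A_k\,e^{i(n+1+\lambda_1)\Theta_k}$. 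The remaining contributions -- the leading terms at $\omega_{u+1},\dots,\omega_s$, which after normalization are $\mathcal{O}(n^{\lambda_1-\lambda_{u+1}})$ when $u<s$ (the factor $\rho^{\lambda_k-\lambda_1}<1$ only improving the constant), and the second-order terms at $\omega_1,\dots,\omega_u$, which give $\mathcal{O}(n^{-\lambda_1})$ when $0<\lambda_1<1$ and $\lambda_1\neq1/2$, $\mathcal{O}(n^{-1}\log n)$ when $\lambda_1=1/2$, and $\mathcal{O}(n^{-1})$ when $1<\lambda_1<2$ -- combine to give $R_n(z)$, uniformly on $K$ because all of $a_k$, $A_k$, $L(z_k,z)$ and the Hankel estimates are uniform there.

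The step I expect to be the main obstacle is this last one: making the Hankel-contour estimates fully rigorous and uniform in $z\in K$ in a setting where the relevant Laurent coefficient is governed by singularities on the \emph{inner} boundary of an annulus, which forces one to manufacture Flajolet--Odlyzko-type $\Delta$-regions by Schwarz reflection across analytic arcs, to track the branches of $(t-\omega_k)^{\lambda_k}$ carefully enough that the phases $e^{i(n+1+\lambda_1)\Theta_k}$ and the constants $A_k$ emerge exactly, and to read off from Lehman's expansion -- including its logarithmic terms at resonant exponents -- the exact order of the second-order contributions that constitute $R_n$.
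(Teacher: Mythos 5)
Your proposal is correct and follows essentially the same route as the paper: the paper likewise starts from (\ref{eq28}), deforms the contour to $\Gamma_\sigma=\mathbb{T}_\sigma\cup\bigl(\cup_k[\sigma_k,\omega_k]\bigr)$ with $\sigma<\rho$ (your Hankel loops, collapsed onto two-sided radial slits ending at the $\omega_k$'s, with the inner circle contributing the negligible $\mathcal{O}(\sqrt{n}\,\sigma^n)$), and evaluates the jumps across the slits via Lehman's expansion and the beta-integral asymptotics of Lemma \ref{lem2}, which is exactly your transfer rule $c\,(t-\omega_k)^\mu\mapsto c\,\binom{n}{-\mu-1}\omega_k^{\,n+1+\mu}$. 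The uniformity in $z$ and the analytic continuation near $\mathbb{T}_\rho$ that you obtain by Schwarz reflection are handled in the paper by Lemma \ref{lem1}, so no essential step is missing.
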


To state the behavior of $P_n(z)$ for points $z$ near $L_\rho$ we need the following piece of notation. For $\delta\in (0,\rho)$ and $1\leq k\leq s$, we
define the following open ``pie-slices":
\[
\Sigma_{\delta,k}:=\left\{w:\rho-\delta<|w|<\rho^2/(\rho-\delta),\
\Theta_k-\delta<\arg(w)<\Theta_k+\delta \right\}.
\]

\begin{thm}\label{thm2} (a) For every $\delta\in (0,\rho)$, there exists $\sigma\in (\rho-\delta,\rho)$
such that  $\psi$ has a one-to-one analytic
continuation to $\mathbb{E}_\sigma\setminus
\cup_{k=1}^s\overline{\Sigma_{\delta,k}}$, and if
\[
V_{\delta,\sigma}:=\left\{z=\psi(w):w\in \mathbb{E}_\sigma\setminus \cup_{k=1}^s\overline{\Sigma_{\delta,k}}\right\}
\]
and $\phi$ denotes the inverse of $\psi$, then
\begin{equation}\label{eq61}
\frac{P_n(z)}{\sqrt{n+1}}=\phi'(z)[\phi(z)]^n-\binom{n}{
-\lambda_1-1}\rho^{n+1+\lambda_1}\left(\sum_{k=1}^u
L(z_k,z)A_ke^{i(n+1+\lambda_1)\Theta_k}
+R_n(z)\right),\quad z\in V_{\delta,\sigma}\cap G_1,
\end{equation}
where $R_n(z)$ satisfies (\ref{eq12}) locally
uniformly on $V_{\delta,\sigma}\cap G_1$ as
$n\to\infty$.

(b) For every corner $z_j$,
\begin{equation}\label{eq43}
\frac{P_n(z_j)}{\sqrt{n+1}\binom{n}{
\lambda^*_j-1}\rho^{n+1-\lambda_j^*}}=
\sum_{\underset{\lambda_k=\lambda^*_j}{k\,:\,
z_k=z_j}
}(A_k)^{-1}e^{i(n+1-\lambda_j^*)\Theta_k} +o(1),
\end{equation}
where $\lambda^*_j=\max\{\lambda_k: z_k=z_j,\
1\leq k\leq s\}$.
\end{thm}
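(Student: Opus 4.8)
The plan is to obtain both parts of Theorem~\ref{thm2} as localized refinements of Theorem~\ref{thm3} and Theorem~\ref{thm1}, using the Lehman expansion of $\psi$ near each corner $\omega_k$ to extract the leading term of the contour integral in~\eqref{eq2}.

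\medskip

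For part~(a), I would begin from the representation~\eqref{eq2}. Fix $\delta\in(0,\rho)$. Since $\psi$ is univalent on $\mathbb{E}_\rho$ and continuous up to $\overline{\mathbb{E}}_\rho$ off the finitely many identification points $\omega_1,\dots,\omega_s$, a standard compactness/monodromy argument gives a $\sigma\in(\rho-\delta,\rho)$ such that $\psi$ continues univalently to $\mathbb{E}_\sigma\setminus\cup_k\overline{\Sigma_{\delta,k}}$; this defines $V_{\delta,\sigma}$ and the inverse $\phi$ on it. Now take $z\in V_{\delta,\sigma}\cap G_1$. On such $z$ one has, by Carleman's formula (Theorem~\ref{thm5}) and the maximum principle, the bound $P_n(z)=\mathcal{O}(\sqrt n\,\tau^n)$ for a suitable $\tau<1$ (more precisely, $z\in\overline{\Omega}_r$ forces the $|\phi(z)|^n$ decay), so Theorem~\ref{thm3} is applicable and $\epsilon_n(z)$ is negligible compared to everything else. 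The task is then to deform the contour $L_1$ in~\eqref{eq2} outward toward $L_\rho$. Pulling back via $\zeta=\psi(t)$, the integral becomes $\oint_{\mathbb{T}_1}$ of a function that is analytic in $t$ on $\mathbb{E}_\rho$ away from the single pole at $t=\phi(z)$ (coming from the factor $L(\zeta,z)$, equivalently $\varphi(\zeta)=\varphi(z)$). Deforming $\mathbb{T}_1$ out to $\mathbb{T}_\rho$ picks up exactly the residue at $t=\phi(z)$, which one computes to be $\phi'(z)[\phi(z)]^n\sqrt{n+1}$ after restoring the prefactor — this is precisely the Carleman main term that appears as the first summand on the right of~\eqref{eq61}. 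What remains is the boundary integral over $\mathbb{T}_\rho$ (interpreted as a sum of integrals over the $s$ arcs $\ell$, with indentations around the $\omega_k$), and this is handled exactly as in the proof of Theorem~\ref{thm1}: the Lehman expansion of $\psi$ at each $\omega_k$ shows that the dominant contribution comes from the corners with the smallest $\lambda_k$, i.e. $k\le u$, producing the term $-\binom{n}{-\lambda_1-1}\rho^{n+1+\lambda_1}\sum_{k=1}^u L(z_k,z)A_k e^{i(n+1+\lambda_1)\Theta_k}$ with the stated remainder $R_n(z)$. The uniformity is local on $V_{\delta,\sigma}\cap G_1$ because the pole $t=\phi(z)$ stays a fixed positive distance from $\mathbb{T}_\rho$ on compact subsets.

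\medskip

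For part~(b) the point $z=z_j$ lies on $L_\rho$ itself, so neither~\eqref{eq2} nor the argument above applies directly; here the right normalization is different (note the exponent $\rho^{\,n+1-\lambda_j^*}$ with $\lambda_j^*$ the \emph{largest} of the relevant angles, and the appearance of $(A_k)^{-1}$ rather than $A_k$). The natural route is to use the relation~\eqref{eq71}: evaluating the Bergman-kernel identity with $z$ replaced by a point approaching $z_j$ from inside, or equivalently differentiating~\eqref{eq71} appropriately, relates $P_n(z_j)$ to the Taylor/Laurent coefficients of $L(\zeta,z)$ and of $\varphi$ near $z_j$. Concretely, because $\varphi$ maps the interior $G_1$ conformally onto $\mathbb{D}_1$ and has a corner of opening $\lambda_j^*\pi$ (interior angle $\pi/\lambda_j^*$ from the perspective of $G_1$, via the reflection~\eqref{eq66}), the local behavior $\varphi(z)-\varphi(z_j)\sim c\,(z-z_j)^{1/\lambda_j^*}$ near $z_j$ feeds into~\eqref{eq2} through the second-order zero of the denominator $[\varphi(\zeta)-\varphi(z)]^2$. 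I would take~\eqref{eq28}, valid for $z\in\overline G_r$ with $\rho<r<1$, let $r\uparrow 1$ and $z\to z_j$ along a path interior to $G_1$, and analyze $\oint_{\mathbb{T}_1} t^n\,dt/\bigl(\varphi(\psi(t))-\varphi(z)\bigr)$ as the pole $t=\phi(z)$ migrates to the corner $\omega_k$ on $\mathbb{T}_1$'s image — again by Lehman's expansion, but now with the asymptotics of $1/A_k$ governing the coefficient. Summing over all $k$ with $z_k=z_j$ and $\lambda_k=\lambda_j^*$ gives~\eqref{eq43}; corners with $z_k=z_j$ but $\lambda_k<\lambda_j^*$ contribute lower order and are absorbed into the $o(1)$.

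\medskip

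The main obstacle I anticipate is part~(b): on $L_\rho$ the integral representation~\eqref{eq2} breaks down (the contour $L_1$ can no longer be freely deformed past $z$ when $z$ sits on the outer boundary), so one must either carefully track the double limit $r\uparrow1$, $z\to z_j$ in~\eqref{eq28} while the pole collides with the corner singularity of $\psi$, or develop a separate local analysis of~\eqref{eq71} at $z_j$. Getting the correct reciprocal constant $(A_k)^{-1}$ and the correct exponent $n+1-\lambda_j^*$ — rather than the $A_k$ and $n+1+\lambda_1$ of the interior formula — requires matching the reflection formula~\eqref{eq66} for $\varphi$ with Lehman's expansion for $\psi$ in a neighborhood of the corner and being scrupulous about which power of $(t-\omega_k)$ dominates once the evaluation point is the corner itself; the interplay of the two reciprocal local parametrizations is where the bookkeeping is most delicate.
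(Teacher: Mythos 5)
Your part (a) is essentially the paper's own argument: apply Theorem \ref{thm3}/\eqref{eq28}, deform the circle of integration past $\mathbb{T}_\rho$ to the contour $\mathbb{T}_\sigma\cup\bigl(\cup_k[\sigma_k,\omega_k]\bigr)$ (pushing strictly inside $\mathbb{T}_\rho$ except for slits up to the $\omega_k$ is essential, so that the circular piece is $\mathcal{O}(\sqrt n\,\sigma^n)$ with $\sigma<\rho$ and hence negligible against the corner terms $\asymp n^{-\lambda_1-1}\rho^n$), collect the residue $\phi'(z)[\phi(z)]^n$ at $t=\phi(z)$, and evaluate the slit integrals by Lehman's expansion exactly as in Theorem \ref{thm1}. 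This part is fine.

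Part (b) contains a genuine error. You write that $\varphi$ ``has a corner of opening $\lambda_j^*\pi$'' at $z_j$ with $\varphi(z)-\varphi(z_j)\sim c\,(z-z_j)^{1/\lambda_j^*}$. This is false: $z_j\in L_\rho\subset G_1$ (since $\rho<1$), and $\varphi$ extends analytically and univalently to $G_{1/\rho}\supset\overline G_1$, so $\varphi$ is conformal at $z_j$ and $\varphi(z)-\varphi(z_j)\sim\varphi'(z_j)(z-z_j)$. For the same reason your premise that \eqref{eq2} ``breaks down'' at $z=z_j$ is wrong --- $z_j$ is an interior point of $G_1$, so \eqref{eq28} applies at $z=z_j$ directly, and no double limit $r\uparrow1$, $z\to z_j$ is needed. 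The singular behavior that produces $(A_k)^{-1}$ and the exponent $n+1-\lambda_j^*$ lives entirely in the integration variable: for those $\omega_k$ with $\psi(\omega_k)=z_j$, Lehman's expansion gives $\varphi(\psi(t))-\varphi(z_j)\sim\varphi'(z_j)A_k(t-\omega_k)^{\lambda_k}$, so the integrand $\varphi'(z_j)t^n/[\varphi(\psi(t))-\varphi(z_j)]$ blows up like $(A_k)^{-1}t^n(t-\omega_k)^{-\lambda_k}$ at the tips of the slits. The paper handles this by encircling each such $\omega_k$ with a small loop $\ell_k$, integrating by parts (which is needed because $(t-\omega_k)^{-\lambda_k}$ need not be integrable when $\lambda_k\geq1$), and obtaining
\[
\oint_{\ell_k}t^n(t-\omega_k)^{-\lambda_k}\,dt=\frac{2\pi i\,n!\,(\omega_k)^{n-\lambda_k+1}}{\Gamma(\lambda_k)\Gamma(n+2-\lambda_k)}+\mathcal{O}(n^2\sigma^n)=2\pi i\binom{n}{\lambda_k-1}(\omega_k)^{n-\lambda_k+1}+\mathcal{O}(n^2\sigma^n),
\]
whence the factor $\binom{n}{\lambda_k-1}\asymp n^{\lambda_k-1}$; since this \emph{grows} with $\lambda_k$, the corners with $\lambda_k=\lambda_j^*=\max$ dominate (the opposite selection rule from part (a), where $\binom{n}{-\lambda_k-1}\asymp n^{-\lambda_k-1}$ makes the smallest angle dominate). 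The contributions from corners $z_k\neq z_j$ are the same slit integrals as in Lemma \ref{lem2}, of order $n^{-\lambda_k-1}\rho^n=o\bigl(n^{\lambda_j^*-1}\rho^n\bigr)$, and are absorbed into the $o(1)$. Your proposal identifies the right qualitative features (reciprocal constant, different exponent, dominance of the largest angle) but locates the singularity in the wrong map and does not supply the loop-integral computation that actually produces \eqref{eq43}.
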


The rate of decay of the $o(1)$-error term in
(\ref{eq43}) can be estimated by comparing the
terms in the right-hand side of (\ref{eq59}).

\begin{remark}\label{rem1}\emph{Totally similar results can be obtained for
considerably more general piecewise analytic
curves $L_\rho$, including those having inner
cusps (as viewed from $\Omega_\rho$) and smooth
corners, more specifically, under the following
assumptions:
\begin{enumerate}
\item[A.1':] The map $\psi$ has a continuous extension to $\overline{\mathbb{E}}_\rho$
and there are $s\geq 1$ distinct points
$\omega_1,\omega_2,\ldots,\omega_s$ in $\mathbb{T}_\rho$
such that if $\ell$ is any of the $s$ open
circular arcs that compose
$\mathbb{T}_\rho\setminus
\{\omega_1,\omega_2,\ldots,\omega_s\}$, say with endpoints
$\omega_k$, $\omega_j$, then $\psi$ is one-to-one on
$\ell$ and $\psi\left(\overline{\ell}\right)$ is an
analytic arc with endpoints $\psi(\omega_k)$,
$\psi(\omega_j)$.
\item[A.2':] $\lambda_k>0$ for every $ k\in\{1,2,\ldots,s\}$, and if
$\lambda_k\in\{1,2\}$ for all $
k\in\{1,2,\ldots,s\}$, then there is at least one
$k$ for which logarithmic terms occur in the
Lehman expansion of $\psi$ about $\omega_k$.
\end{enumerate}
The statements of the corresponding results and
their proofs are, however, more cumbersome, and
so we have sacrificed generality in the present
paper for the benefit of clarity. The interested
reader will find useful to consult
\cite{minafaber}, where similar asymptotic
formulas have been derived for the Faber
polynomials of a domain $\Omega_\rho$ satisfying
A.1' and A.2' by exploiting an integral
representation for these polynomials that is
somewhat similar to (\ref{eq2}). }
\end{remark}

\subsection{The zeros of
$P_n(z)$}\label{thezeros}

Throughout this subsection, we assume that the
map $\psi$ satisfies Conditions A.1 and A.2
stated in Subsection \ref{asymptoticformulas}.
Here we shall discuss some of the conclusions
that can be drawn from our previous results
regarding the location, limiting distribution and
accumulation points of the zeros of Carleman
polynomials.

Asymptotic formulas similar to (\ref{eq56}) and
(\ref{eq61}) are known to be satisfied by other
important systems of polynomials, e.g.,
polynomials orthogonal on the unit circle with
respect to certain types of weights \cite{Sza},
\cite{andrei}, \cite{andrei1}, and Faber
polynomials for domains with piecewise analytic
boundary \cite{minafaber}. The results that
follow are well-known consequences of such type
of behavior.

We start by introducing the notation and concepts
needed in analyzing the zeros of $P_n$. The letter $\mathcal{Z}$ denotes the set of
accumulation points of the zeros of Carleman
polynomials, i.e., $\mathcal{Z}$ consists of all
points $t\in \overline{\mathbb{C}}$ such that every
neighborhood of $t$ contains zeros of infinitely
many polynomials $P_n$.

We shall denote by $\nu_{n}$ the normalized
counting measure of the zeros of $P_n$, that is,
\begin{equation}\label{FabEq28}
\nu_{n}:=\frac{1}{n}\sum_{k=1}^n\delta_{z_{k,n}}\,,\quad
n=1,2,\ldots,
\end{equation}
where $z_{1,n},z_{2,n},\ldots,z_{n,n}$ are the
zeros of $P_n$ (counting multiplicities) and
$\delta_z$ denotes the unit point measure at $z$.

A sequence of measures $\left\{\upsilon_n\right\}_{n\geq 1}$ is said to converge in
the weak*-topology to the measure $\upsilon$
(symbolically, $\upsilon_n\wc \upsilon$ as
$n\to\infty$) if for every continuous function
$f$ defined on $\overline{\mathbb{C}}$,
$\lim_{n\to\infty}\int fd\upsilon_n=\int fd\upsilon$.

The \emph{equilibrium measure} $\mu_{L_\rho}$ of
$L_\rho$ is the probability measure supported on
$L_\rho$ whose value at any given Borel set
$B\subset L_\rho$ is
\begin{equation}\label{eq82}
\mu_{L_\rho}(B)=\frac{1}{2\pi\rho}\int_{B^{-1}}|dt|,\quad
B^{-1}:=\{t\in \mathbb{T}_\rho: \psi(t)\in B\}.
\end{equation}
Finally, for $\epsilon> 0$ and $t\in \mathbb{C}$,
$D_\epsilon(t)$ denotes the open disk with center
at $t$ and radius $\epsilon$.

From Theorem \ref{thm2}(a) and the maximum
modulus principle for analytic functions we see
that
\begin{equation}\label{FabEq68}
\frac{P_n(z)}{\sqrt{n+1}\,[\phi(z)]^n}=\phi'(z)+\left\{
\begin{array}{cc}
  \mathcal{O}\left(n^{-\lambda_1-1}(\rho/r)^n\right),\ & z\in\overline{\Omega}_r,\ \rho<r<1, \\
  \mathcal{O}\left(n^{-\lambda_1-1}\right),\ & z\in
  \overline{\Omega}_\rho\setminus\cup_{k=1}^s
  D_{\epsilon}(z_k),\
  \epsilon>0.
\end{array}
\right.
\end{equation}

Hence, we trivially have
\begin{cor}\label{cor2} For every $\epsilon>0$, there is
$N_\epsilon>0$ such that if $n>N_{\epsilon}$, then $P_n(z)$ has no zeros on
$\overline{\Omega}_\rho\setminus\cup_{k=1}^s D_{\epsilon}(z_k)$. In particular, $\mathcal{Z}\cap \Omega_\rho=\emptyset$.
\end{cor}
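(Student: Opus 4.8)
The plan is to read off the claim directly from the estimate in display~\eqref{FabEq68}, which is already in hand as a consequence of Theorem~\ref{thm2}(a) and the maximum modulus principle. The key observation is that on the set $\overline{\Omega}_\rho\setminus\cup_{k=1}^s D_\epsilon(z_k)$ the quantity $P_n(z)/(\sqrt{n+1}\,[\phi(z)]^n)$ equals $\phi'(z)+\mathcal{O}(n^{-\lambda_1-1})$ uniformly, and the conformal map $\phi$ has nonvanishing derivative there. Since $[\phi(z)]^n$ never vanishes on $\Omega_\rho$ and $\phi$ extends continuously to $\overline{\Omega}_\rho$, a zero of $P_n$ in that set forces $\phi'(z)+\mathcal{O}(n^{-\lambda_1-1})=0$, i.e. $|\phi'(z)|=\mathcal{O}(n^{-\lambda_1-1})$, which is impossible for large $n$.

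First I would fix $\epsilon>0$ and set $K_\epsilon:=\overline{\Omega}_\rho\setminus\cup_{k=1}^s D_\epsilon(z_k)$. This is a compact subset of $\overline{\Omega}_\rho$ avoiding the corners, so by the uniform second alternative in~\eqref{FabEq68} there is a constant $C_\epsilon$ and an index $N'$ with
\[
\left|\frac{P_n(z)}{\sqrt{n+1}\,[\phi(z)]^n}-\phi'(z)\right|\le C_\epsilon\, n^{-\lambda_1-1},\qquad z\in K_\epsilon,\ n\ge N'.
\]
Next I would record that $m_\epsilon:=\min_{z\in K_\epsilon}|\phi'(z)|>0$: indeed $\phi$ is a conformal (hence locally univalent) map of $\Omega_\rho$ onto $\mathbb{E}_\rho$ with an analytic, univalent continuation to all of $\Omega_\rho$, so $\phi'$ is continuous and nonvanishing on the compact set $K_\epsilon\subset\overline{\Omega}_\rho$. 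Choosing $N_\epsilon\ge N'$ so large that $C_\epsilon\, N_\epsilon^{-\lambda_1-1}<m_\epsilon$, the triangle inequality gives, for every $n>N_\epsilon$ and every $z\in K_\epsilon$,
\[
\left|\frac{P_n(z)}{\sqrt{n+1}\,[\phi(z)]^n}\right|\ge |\phi'(z)|-C_\epsilon n^{-\lambda_1-1}\ge m_\epsilon-C_\epsilon N_\epsilon^{-\lambda_1-1}>0,
\]
so $P_n(z)\ne 0$ on $K_\epsilon$. This is exactly the first assertion.

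For the final sentence, $\mathcal{Z}\cap\Omega_\rho=\emptyset$, I would argue that any point $t\in\Omega_\rho$ lies in $\Omega_r$ for some $r$ with $\rho<r<1$, and hence has a neighborhood whose closure is contained in $\overline{\Omega}_{r'}$ for some $r<r'<1$; by the first alternative in~\eqref{FabEq68} (or simply by the case $\epsilon$ small enough that $D_\epsilon(z_k)$ misses a neighborhood of $t$, combined with the bound just proved), $P_n$ has no zeros in that neighborhood once $n$ is large, so $t\notin\mathcal{Z}$. I do not anticipate a genuine obstacle here; the only point requiring a line of care is the positivity of $\min_{K_\epsilon}|\phi'|$, which rests on the analytic univalent extension of $\psi$ to $\overline{\mathbb{E}}_\rho$ only away from the points $\omega_k$ — precisely why the excised disks $D_\epsilon(z_k)$ are present in the statement.
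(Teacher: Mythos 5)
Your argument is correct and is essentially the paper's own: the corollary is read off directly from the uniform estimate \eqref{FabEq68}, together with the nonvanishing of $\phi'$ on the compact set $\overline{\Omega}_\rho\setminus\cup_{k=1}^s D_\epsilon(z_k)$ (viewed in $\overline{\mathbb{C}}$, with $\phi'(\infty)>0$). The paper simply calls this step trivial; you have supplied the same triangle-inequality details, including the correct remark that the excised disks are what guarantee $\min|\phi'|>0$ on the boundary.
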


To describe the zero behavior of $P_n$ inside
$G_\rho$, it is convenient to  rewrite
(\ref{eq56}) as follows. Put
\[
\hat{A}_k:=A_ke^{i(\lambda_1+1)(\Theta_k-\Theta_1)}\,,\quad
1\leq k\leq u\,,
\]
and let $\theta_1,\theta_2,\ldots,\theta_u$ be such that
\[
e^{2\pi i\theta_k}=e^{i(\Theta_k-\Theta_1)},
\quad \theta_k\in (0,1],\quad 1\leq k\leq u\,,
\]
so that (\ref{eq56}) takes the form
\begin{equation}\label{FabEq64}
P^*_n(z)=H_n(z)+o(1)
\end{equation}
locally uniformly on $G_\rho$ as $n\to\infty$, where
\begin{equation}\label{eq75}
P_n^*(z):=\frac{-P_n(z)}{\sqrt{n+1}\binom{n}{
-\lambda_1-1}(\omega_1)^{n+1+\lambda_1}}\,,\quad
H_n(z):=\varphi'(z)\sum_{k=1}^u
\frac{\varphi'(z_k)\hat{A}_k e^{2\pi
in\theta_k}}{[\varphi(z)-\varphi(z_k)]^2}\,.
\end{equation}

The zeros of $P_n$ coincide, of course, with
those of $P^*_n$, and in view of Hurwitz theorem,
$\mathcal{Z}$ contains the zeros lying in
$G_\rho$ of those not identically zero functions
that are the uniform limit of some subsequence of
$\{H_n\}_{n\geq 0}$. We then pass to establish
the general form of any uniform limit point of
$\{H_n\}_{n\geq 0}$.

Among the numbers
$1=\theta_1,\theta_2,\ldots,\theta_u$, there is a
basis over the rationals  containing $\theta_1$
\cite[Ch. III. p. 4]{Cassels}, say $\theta_1,
\theta_2,\ldots,\theta_{u^*}$, $1\leq u^*\leq u$,
so that for every $k\in \{1,2,\ldots,u\}$, there
are unique rational numbers
$r_{k1},r_{k2},\ldots,r_{ku^*}$ with
\[
\theta_k=\sum_{j=1}^{u^*}r_{kj}\theta_j, \quad 1\leq k \leq
u.
\]
Note that $u^*=1$ if and only if all the
$\theta_k$'s are rational, and if $u^*\geq 2$,
then $\theta_2,\ldots,\theta_{u^*}$ are
irrational numbers linearly independent over the
rationals.

For every $k\in \{1,2,\ldots,u\}$, let $1\leq p_k
\leq q_k$ be the unique relatively prime integers
such that
\[
e^{2\pi i\,r_{k1}}=e^{2\pi i\,p_k/q_k},
\]
so that
\begin{equation}\label{eq79}
e^{2\pi i\theta_k}=e^{2\pi
i\,\left(\frac{p_k}{q_k}+\sum_{j=2}^{u^*}r_{kj}\theta_j\right)},
\quad 1\leq k \leq u,
\end{equation}
where in case $u^*=1$, the sum
$\sum_{j=2}^{u^*}\cdots$ above is understood to
be zero (observe that $p_1=q_1=1$, but $p_k<q_k$
for $k>1$).

Let $\mathrm{\mathbf{q}}$ be the least common
multiple of the denominators
$q_1,q_2,\ldots,q_u$, and for every
$\ell\in\{1,2,\ldots, \mathrm{\mathbf{q}}\}$, let
\[
\ell p_k=s_{k\ell}\!\!\!\mod q_k,\quad 0\leq s_{k\ell}<
q_k\,.
\]
Observe that  two $u$-tuples
$\left(s_{1\ell},s_{2\ell},\ldots,s_{u\ell}\right)$
corresponding to different values of $\ell$ are distinct.

\begin{thm} \label{thm6} The functions $f$ that are the uniform limit of some subsequence of $\{H_n\}_{n\geq 0}$
are the functions of the form
\begin{equation}\label{eq72}
f(z)=\varphi'(z)\sum_{k=1}^u
\frac{\varphi'(z_k)\hat{A}_k e^{2\pi
i\,\left(\frac{s_{k\ell}}{q_k}+\sum_{j=2}^{u^*}r_{kj}\alpha_j\right)}}{[\varphi(z)-\varphi(z_k)]^2}
\end{equation}
with $\ell\in\{1,2,\ldots, \mathrm{\mathbf{q}}\}$
and $\alpha_2,\ldots,\alpha_{u^*}$ arbitrary real
numbers. In particular, there is always such a
limit function $f$ that is not identically zero.
\end{thm}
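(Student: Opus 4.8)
The plan is to analyze the sequence $\{H_n\}_{n\ge 0}$ through the vector of phases $\big(e^{2\pi i n\theta_1},\dots,e^{2\pi i n\theta_u}\big)$, which completely determines $H_n$ via the finite sum in \eqref{eq75}. First I would split each $\theta_k$ according to \eqref{eq79} into its ``rational part'' $p_k/q_k$ and its ``irrational part'' $\sum_{j=2}^{u^*} r_{kj}\theta_j$. The rational parts are governed purely by the residue $n\bmod \mathbf{q}$, which is eventually periodic, while the irrational parts are governed by the orbit of $n\big(\theta_2,\dots,\theta_{u^*}\big)$ on the torus $\mathbb{T}^{u^*-1}$. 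Since $\theta_2,\dots,\theta_{u^*}$ together with $1$ are linearly independent over $\mathbb{Q}$ (they form part of a $\mathbb{Q}$-basis of $1,\theta_1,\dots,\theta_u$), Weyl's equidistribution theorem says the sequence $\big(n\theta_2,\dots,n\theta_u\big)\bmod 1$ is equidistributed in $\mathbb{T}^{u^*-1}$; more to the point, its orbit closure is all of $\mathbb{T}^{u^*-1}$.

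Next I would make this precise as a two-stage statement. Fix $\ell\in\{1,\dots,\mathbf{q}\}$ and consider the subsequence of those $n$ with $n\equiv\ell\pmod{\mathbf{q}}$; along this subsequence $e^{2\pi i n p_k/q_k}=e^{2\pi i\,\ell p_k/q_k}=e^{2\pi i\,s_{k\ell}/q_k}$ for every $k$. Restricted to this arithmetic progression, $n=\ell+m\mathbf{q}$, and the irrational phases become $e^{2\pi i m(\mathbf{q}\theta_j)}$ for $j=2,\dots,u^*$; since $\mathbf{q}\theta_2,\dots,\mathbf{q}\theta_{u^*}$ are again rationally independent together with $1$, Weyl's theorem gives that $\big(m\mathbf{q}\theta_2,\dots,m\mathbf{q}\theta_{u^*}\big)\bmod 1$ is dense in $\mathbb{T}^{u^*-1}$ as $m\to\infty$. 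Hence for any prescribed $\alpha_2,\dots,\alpha_{u^*}\in\mathbb{R}$ there is a subsequence of $\{n\equiv\ell\ (\mathrm{mod}\ \mathbf{q})\}$ along which $e^{2\pi i n\theta_j}\to e^{2\pi i\alpha_j}$ for $j=2,\dots,u^*$, and therefore $e^{2\pi i n\theta_k}\to e^{2\pi i(s_{k\ell}/q_k+\sum_{j=2}^{u^*}r_{kj}\alpha_j)}$ for every $k$ (using $\theta_k=\sum_j r_{kj}\theta_j$, writing each phase in terms of the basis phases). Plugging these limiting phase vectors into the explicit finite sum \eqref{eq75} for $H_n$ shows every uniform limit of a subsequence of $\{H_n\}$ has the form \eqref{eq72}, and conversely that every function of the form \eqref{eq72} is realized as such a limit. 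Uniformity of the convergence on compact subsets of $G_\rho$ is automatic since $H_n$ is a fixed finite linear combination of the fixed analytic functions $z\mapsto \varphi'(z)\varphi'(z_k)[\varphi(z)-\varphi(z_k)]^{-2}$ with bounded (unimodular-weighted) coefficients, so coefficientwise convergence forces uniform convergence.

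For the final assertion that some limit $f$ is not identically zero, I would argue that not all these finitely many limit functions can vanish identically: the leading Laurent behavior of $H_n$ near a corner $z_k$ (with $k\le u$) is $\hat A_k e^{2\pi i n\theta_k}\varphi'(z_k)^2(\varphi(z)-\varphi(z_k))^{-2}+\cdots$, and since $\hat A_k\ne 0$ and $\varphi'(z_k)\ne 0$, the limit function \eqref{eq72} has a genuine double pole at $z_k$ whenever its $k$-th phase is nonzero — which it always is, being a unit complex number. Thus \emph{every} function of the form \eqref{eq72} is nonzero (it has poles at the $z_k$), so in particular at least one is not identically zero; alternatively, take $\ell=\mathbf{q}$ and all $\alpha_j=0$ to get a concrete nonzero $f$.

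I expect the main obstacle to be the bookkeeping in the second stage: carefully tracking how the chosen basis $\theta_1,\dots,\theta_{u^*}$, the rational coordinates $r_{kj}$, the residues $s_{k\ell}$, and the least common multiple $\mathbf{q}$ interact, so that the limiting phase of $e^{2\pi i n\theta_k}$ comes out \emph{exactly} as the exponent displayed in \eqref{eq72} — in particular verifying that every attainable combination $(\ell;\alpha_2,\dots,\alpha_{u^*})$ does occur along some subsequence, and that no others do. The equidistribution input (Weyl / Kronecker density on the torus) is standard; the delicate point is purely the algebra of reducing $e^{2\pi i n\theta_k}$ to the basis coordinates and confirming the claimed parametrization of limit functions is both exhaustive and sharp.
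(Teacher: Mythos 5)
Your first two stages follow essentially the paper's route (compactness to get the form of any limit, then Kronecker/Weyl density on the torus to realize every prescribed target; the paper cites Kronecker's theorem where you cite Weyl, which is immaterial here), but the step ``$e^{2\pi i n\theta_j}\to e^{2\pi i\alpha_j}$ for $j=2,\dots,u^*$, and \emph{therefore} $e^{2\pi i n\theta_k}\to e^{2\pi i(s_{k\ell}/q_k+\sum_j r_{kj}\alpha_j)}$'' does not follow as written, because the $r_{kj}$ are non-integer rationals in general: knowing $n\theta_j$ modulo $1$ does not determine $r_{kj}\,n\theta_j$ modulo $1$ (if $r_{kj}=1/2$, the phase $e^{\pi i n\theta_j}$ can tend to either of two values compatible with a given limit of $e^{2\pi i n\theta_j}$). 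This is precisely why the paper applies Kronecker's theorem to the full system of linear forms $r_{kj}\mathbf{q}\theta_j x$, $1\le k\le u$, $2\le j\le u^*$ (with targets $r_{kj}\mathbf{q}\theta_j\chi_j$, $\chi_j=(\alpha_j-\ell\theta_j)/\mathbf{q}$), verifying the compatibility hypothesis via the rational independence of $1,\theta_2,\dots,\theta_{u^*}$, rather than to the basis forms $\mathbf{q}\theta_j x$ alone. Your version is repairable (pass to a common denominator of the $r_{kj}$ before invoking equidistribution), but as it stands the ``every such $f$ is realized'' half of the parametrization is not established; you flag this as the delicate point but do not resolve it.

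The serious gap is in the final assertion. Your claim that \emph{every} function of the form (\ref{eq72}) is nonzero because it has a genuine double pole at each $z_k$ is false: the corners $z_k=\psi(\omega_k)$ need not be pairwise distinct, so several terms of the sum can share the same pole and their coefficients can cancel. The paper says this explicitly right after Corollary \ref{cor3} (``some of the limit functions (\ref{eq72}) can be identically zero''), and the lemniscate example of Subsection \ref{lemniscates} realizes it: there all the $z_k$ coincide at the origin and $H_n\equiv 0$ for every $n\not\equiv s-2 \bmod s$. Your fallback (``take $\ell=\mathbf{q}$ and all $\alpha_j=0$'') is likewise unjustified, since that particular limit function could also vanish identically. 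The paper's actual argument for this part is genuinely nontrivial: it supposes that the double-pole coefficient at $z_1$ vanishes for \emph{all} admissible $(\ell,\alpha_2,\dots,\alpha_{u^*})$, deduces from the free variation of the $\alpha_j$ and the nonvanishing of $\varphi'(z_1)\hat{A}_1$ that $r_{kj}=0$ (hence $\theta_k=p_k/q_k$ rational) for all $k$ with $z_k=z_1$, and then reaches a contradiction from the nonsingularity of a Vandermonde matrix in the roots of unity $e^{2\pi i\ell/q'}$. This linear-algebra step cannot be replaced by the pole-counting argument you propose.
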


\begin{cor}\label{cor3}
There exists a subsequence
$\{n_j\}_{j=1}^\infty\subset \mathbb{N}$ such
that $\nu_{n_j}\wc \mu_{L_\rho}$ as $j\to\infty$.
Hence, $L_\rho\subset \mathcal{Z}$.
\end{cor}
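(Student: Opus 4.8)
The plan is to produce a subsequence along which the counting measures $\nu_{n_j}$ converge to $\mu_{L_\rho}$ by exploiting the oscillatory asymptotics recorded in \eqref{FabEq64}. First I would invoke Theorem \ref{thm6} (more precisely, the consequence of the Weyl equidistribution / Kronecker density argument underlying it) to select a subsequence $\{n_j\}$ along which the sequence $e^{2\pi i n_j \theta_k}$, $1\le k\le u$, converges to a \emph{prescribed} limit point; concretely, I want $n_j\equiv 0 \pmod{\mathbf{q}}$ and the fractional parts of $n_j\theta_2,\ldots,n_j\theta_{u^*}$ all tending to $0$. For such a subsequence, $H_{n_j}(z)\to H(z):=\varphi'(z)\sum_{k=1}^u \varphi'(z_k)\hat A_k[\varphi(z)-\varphi(z_k)]^{-2}$ locally uniformly on $G_\rho$, and by \eqref{FabEq64} the normalized polynomials $P^*_{n_j}$ converge to the same limit $H$.

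The second step is to analyze this limit function $H$. Pulling back through the conformal map $\varphi:G_1\to\mathbb D_1$ (extended meromorphically to $G_{1/\rho}$, on which the $z_k$ lie since $\rho<1$), write $w=\varphi(z)$, $w_k=\varphi(z_k)$, and $c_k:=\varphi'(z_k)\hat A_k$. Then $H(z)/\varphi'(z) = \sum_{k=1}^u c_k(w-w_k)^{-2}$, a rational function of $w$ with at most double poles at the finitely many points $w_k$ and vanishing at $w=\infty$. Its zeros inside $\mathbb D_1$ (equivalently, zeros of $P^*_{n_j}$, by Hurwitz's theorem, clustering in $G_\rho$) number at most $2u-1$, hence $o(n_j)$. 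The key point is therefore that \eqref{FabEq64} forces all but $o(n_j)$ of the zeros of $P_{n_j}$ to escape every compact subset of $G_\rho$; combined with Corollary \ref{cor2}, which keeps zeros away from $\overline\Omega_\rho$ except near the corners $z_k$ (a finite set), essentially all the zero mass of $\nu_{n_j}$ must concentrate on $L_\rho$ (with a vanishing fraction allowed to sit in small neighborhoods of the $z_k$, which carry no $\mu_{L_\rho}$-mass and contribute nothing in the weak* limit).

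The third step identifies the limiting measure as $\mu_{L_\rho}$ itself rather than some other measure on $L_\rho$. Here I would use the standard potential-theoretic argument: any weak* limit $\nu$ of a subsequence of $\{\nu_{n_j}\}$ is a probability measure on $L_\rho$, and from the leading-coefficient asymptotics $\kappa_n = \sqrt{n+1}\,[\phi'(\infty)]^{n+1}[1+\mathcal O(\rho^{2n})]$ of Theorem \ref{thm5} one extracts $\lim_n \tfrac1n\log|\kappa_n| = \log\phi'(\infty) = -\log(\mathrm{cap}\,L_\rho)$. Writing $P_n(z)=\kappa_n\prod_k(z-z_{k,n})$ and taking $\tfrac1n\log|\cdot|$ along the subsequence, the upper-envelope/lower-envelope theorems for logarithmic potentials show that $U^{\nu}(z)\equiv$ (the equilibrium potential of $L_\rho$) off $L_\rho$; since $L_\rho$ has positive capacity and empty interior, this forces $\nu=\mu_{L_\rho}$ by unicity of the equilibrium measure. (Alternatively, and perhaps more cleanly within the framework of this paper, one uses that $P^*_{n_j}\to H$ implies $\tfrac1{n_j}\log|P_{n_j}(z)| \to \log|\phi(z)| + 0$ locally uniformly on $G_\rho$ away from the zeros of $H$, i.e. the potential of $\nu$ matches that of the balayage of $\delta_\infty$ onto $L_\rho$ — which is precisely $\mu_{L_\rho}$ by \eqref{eq82} and the change of variables $\zeta=\psi(t)$.) Passing to a further subsequence is harmless since the limit is now pinned down uniquely, so the full $\{n_j\}$ works, and $L_\rho\subset\mathcal Z$ follows because $\mathrm{supp}\,\mu_{L_\rho}=L_\rho$ together with the fact that the weak* convergence of $\nu_{n_j}$ to a measure with support $L_\rho$ forces zeros of $P_{n_j}$ into every neighborhood of every point of $L_\rho$.

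The main obstacle I expect is the transition from ``$P^*_{n_j}\to H$ locally uniformly on $G_\rho$ with $H\not\equiv 0$'' to the quantitative statement that $\nu_{n_j}$ puts asymptotically \emph{all} its mass on $L_\rho$: one must rule out mass leaking to $\infty$ or hiding in the corner-neighborhoods $D_\epsilon(z_k)$, and for the former a little care is needed since $G_\rho$ is unbounded. This is handled by noting $\deg P_n = n$ with $\kappa_n$ controlled as above, so no zeros escape to $\infty$ faster than the bulk (a Cauchy-estimate / argument-principle count on a large fixed circle $L_r$, $r>1$, shows at most $o(n)$ zeros outside any fixed large $G_r$), after which the combination of Hurwitz on $G_\rho$ and Corollary \ref{cor2} on $\overline\Omega_\rho$ squeezes the remaining $n-o(n)$ zeros into an arbitrarily thin neighborhood of $L_\rho$ minus the finite corner set, which is exactly what weak* convergence to $\mu_{L_\rho}$ requires.
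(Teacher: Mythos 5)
Your overall route is the paper's: use Theorem \ref{thm6} to produce a subsequence along which $P^*_{n_j}$ converges locally uniformly on $G_\rho$ to a limit that is not identically zero, and then run the argument of Corollary \ref{cor4}(b) --- Hurwitz plus Corollary \ref{cor2} to confine all but $o(n_j)$ zeros to a thin neighborhood of $L_\rho$, Helly to extract a weak* limit $\mu$ supported on $L_\rho$, the potential computation $U^{\mu}(z)=\lim_j n_j^{-1}\log\bigl(\kappa_{n_j}/|P_{n_j}(z)|\bigr)=\log|\phi'(\infty)/\phi(z)|$ on $\Omega_\rho$ coming from Theorem \ref{thm5}, and a unicity theorem for potentials to conclude $\mu=\mu_{L_\rho}$. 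Your second and third steps are sound and essentially reproduce that argument; the worry about mass leaking to $\infty$ is already disposed of by Corollary \ref{cor2}, since $\overline{\Omega}_\rho$ contains a neighborhood of $\infty$ and carries no zeros at all for large $n$.

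The one step that can fail is the choice of prescribed limit point in your first step. You take $n_j\equiv 0\pmod{\mathbf{q}}$ with the fractional parts of $n_j\theta_2,\ldots,n_j\theta_{u^*}$ tending to $0$, so that $H_{n_j}\to H(z)=\varphi'(z)\sum_{k=1}^u\varphi'(z_k)\hat A_k[\varphi(z)-\varphi(z_k)]^{-2}$, and you then count the zeros of $H$. But Corollary \ref{cor3} is stated \emph{without} Assumption A.3, and the corners $z_k$ need not be pairwise distinct: if several coincide, the corresponding terms combine into $\bigl(\sum_{k:\,z_k=z_j}\varphi'(z_k)\hat A_k\bigr)[\varphi(z)-\varphi(z_j)]^{-2}$, and these coefficient sums can all vanish, making your particular $H$ identically zero --- this is exactly the degeneracy discussed in the paper just before Assumption A.3 is introduced, and the lemniscate example of Theorem \ref{thm4} shows it actually occurs. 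The repair is what the paper does: invoke the final assertion of Theorem \ref{thm6} (proved there by the Vandermonde argument) that \emph{some} limit function of the form (\ref{eq72}) is not identically zero, and choose the subsequence realizing that particular limit, rather than fixing the all-phases-equal-one limit in advance. With that substitution the rest of your argument goes through as written.
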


However, because the $z_k$'s are not necessarily
pairwise distinct, some of the limit functions
(\ref{eq72}) can be identically zero, which makes
Theorem \ref{thm1} insufficient to describe
$\mathcal{Z}\cap G_\rho$. A necessary condition
for this to happen is that for every $1\leq j\leq
u$,
\[
2\max_{k\,:\,z_k=z_j}|A_{k}|\leq
\sum_{k\,:\,z_k=z_j}|A_{k}|.
\]
It is not sufficient though, and whether for a
concrete instance of a curve $L_\rho$ satisfying
A.1 and A.2 there will be a limit function of the
form (\ref{eq72}) that is identically zero
ultimately depends on the specific values of the
$\theta_k$'s and can be determined, in principle,
from the general form given in (\ref{eq72}).

Let us then make the assumption that
\begin{enumerate}
\item[\textbf{A.3:}] No limit function of the form (\ref{eq72}) is identically
zero.
\end{enumerate}

Such an assumption is satisfied in a large number
of cases. For instance, if there is $k$ such that
$z_j\not= z_k$ whenever $j\not =k$, as is the
case in which $L_\rho$ is, in addition, a Jordan
curve.

\begin{cor}\label{cor4} Assume A.3
holds.
\begin{enumerate}
\item[(a)]
The point $t\in G_\rho$ also belongs to
$\mathcal{Z}$ if and only if there exist an
integer $\ell\in\{1,2,\ldots,
\mathrm{\mathbf{q}}\}$ and real numbers
$\alpha_2,\ldots,\alpha_{u^*}$ such that
\begin{equation}\label{eq73}
\sum_{k=1}^u \frac{\varphi'(z_k)\hat{A}_k e^{2\pi
i\,\left(\frac{s_{k\ell}}{q_k}+\sum_{j=2}^{u^*}r_{kj}\alpha_j\right)}}{[\varphi(t)-\varphi(z_k)]^2}=0.
\end{equation}

\item[(b)] For every compact set $E\subset G_\rho$
there is a number $N_E$ such that when $n>N_E$,
$P_n$ has at most $2(J-1)$ zeros in $E$ (counting
multiplicities), where $J$ is the number of
corners $z_k$. As a consequence, $\nu_{n}\wc
\mu_{L_\rho}$ as $n\to\infty$.
\end{enumerate}
\end{cor}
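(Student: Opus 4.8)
The statement to prove is Corollary~\ref{cor4}, which has two parts. The plan is to deduce everything from the asymptotic relation \eqref{FabEq64}, the description of limit functions in Theorem~\ref{thm6}, and Hurwitz's theorem, together with the structure of the exponential sums $e^{2\pi i n\theta_k}$ governed by Weyl's equidistribution / Kronecker's theorem (the same machinery already invoked to set up Theorem~\ref{thm6}).

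For part (a), I would argue as follows. First suppose $t\in G_\rho$ satisfies \eqref{eq73} for some $\ell$ and some real $\alpha_2,\dots,\alpha_{u^*}$; then by Theorem~\ref{thm6} there is a limit function $f$ of the form \eqref{eq72} with $f(t)=0$, and by Assumption~A.3 this $f$ is not identically zero. Pick a subsequence $n_j$ along which $H_{n_j}\to f$ locally uniformly; by \eqref{FabEq64}, $P^*_{n_j}\to f$ locally uniformly on $G_\rho$ as well. Since $f\not\equiv 0$ and $f(t)=0$, Hurwitz's theorem produces zeros of $P^*_{n_j}$ — hence of $P_{n_j}$ — in every neighborhood of $t$ for $j$ large, so $t\in\mathcal Z$. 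Conversely, suppose $t\in G_\rho\cap\mathcal Z$. Then there is a subsequence $n_j$ and zeros $\zeta_{n_j}\to t$ with $P_{n_j}(\zeta_{n_j})=0$. Passing to a further subsequence, by compactness of the relevant parameter ranges (the finite index $\ell\in\{1,\dots,\mathbf q\}$ together with $(\alpha_2,\dots,\alpha_{u^*})\in(\mathbb R/\mathbb Z)^{u^*-1}$, which is compact) and by the analysis behind Theorem~\ref{thm6}, we may assume $H_{n_j}\to f$ locally uniformly for some $f$ of the form \eqref{eq72}; under A.3 this $f$ is not identically zero, so again $P^*_{n_j}\to f$ locally uniformly, and local uniform convergence forces $f(t)=\lim P^*_{n_j}(\zeta_{n_j})=0$. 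Writing out $f(t)=0$ using \eqref{eq72} and dividing off the nonzero factor $\varphi'(t)$ yields exactly \eqref{eq73}.

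For part (b), the bound on the number of zeros in a compact $E\subset G_\rho$ comes from a normal-families argument combined with the fact that every limit function $f$ in \eqref{eq72}, being of the form $\varphi'(z)\sum_{k=1}^u \varphi'(z_k)\hat A_k c_k/[\varphi(z)-\varphi(z_k)]^2$ with the distinct corner values among the $z_k$ numbering $J$, is (after the change of variable $w=\varphi(z)$) a rational function in $w$ whose only poles in $\varphi(G_\rho)\subset\mathbb D_1$ are at the $J$ distinct points $\varphi(z_j)$, each of order $2$; such a function, if not identically zero, has at most $2(J-1)$ zeros in $G_\rho$ (the numerator, over a common denominator which is a polynomial of degree $2J$, is a polynomial of degree at most $2J-2$ — the top two coefficients cancel because the partial-fraction expansion has no polynomial part and the $1/w$ and $1/w^2$ behaviors are absent). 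If part~(b) failed, there would be a subsequence of $P^*_n$ with at least $2(J-1)+1$ zeros in $E$, but a locally uniform limit $f$ of (a further subsequence of) $H_n$ would then have to vanish at least $2(J-1)+1$ times in $\overline E$ unless $f\equiv 0$; A.3 rules out $f\equiv 0$, contradiction. Finally, this uniform-in-$n$ bound on the number of zeros in any compact subset of $G_\rho$, combined with Corollary~\ref{cor2} (no zeros near $\Omega_\rho$ away from the corners) and Corollary~\ref{cor3} (the counting measures accumulate at $\mu_{L_\rho}$), upgrades the subsequential convergence $\nu_{n_j}\wc\mu_{L_\rho}$ to full convergence $\nu_n\wc\mu_{L_\rho}$: any weak* limit point $\nu$ of $\{\nu_n\}$ is supported on $L_\rho$ (since mass escaping into $G_\rho$ is $O(1/n)\to 0$ on compacts and nothing accumulates in $\Omega_\rho$), and the logarithmic potential / balayage identification forces $\nu=\mu_{L_\rho}$.

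The main obstacle is the converse direction of part~(a) and the setup for part~(b): one must make precise the claim that \emph{every} locally uniform limit point of $\{H_n\}$ — equivalently of $\{P^*_n\}$ — is of the form \eqref{eq72}, i.e., that Theorem~\ref{thm6} captures all subsequential limits and that passing to such a limit is legitimate when we only know a priori that $P_{n_j}$ has a zero near $t$. This requires using the compactness of the torus of parameters $(\ell,\alpha_2,\dots,\alpha_{u^*})$ and a diagonal argument to extract a convergent sub-subsequence of $H_{n_j}$, and then invoking A.3 to guarantee the limit is not the zero function before applying Hurwitz. The potential-theoretic endgame of part~(b) is standard (it mirrors the argument for the analogous Faber-polynomial result in \cite{minafaber}), so I expect it to be routine once the zero-counting bound is in hand.
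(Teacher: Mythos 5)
Your proposal is correct and follows essentially the same route as the paper: part (a) is the Theorem \ref{thm6} + Hurwitz argument that the paper declares straightforward and omits, and part (b) is the paper's own contradiction argument (a nonzero limit of the form (\ref{eq72}) is $\varphi'$ times a rational function of $\varphi$ whose numerator has degree at most $2(J-1)$), followed by Helly's selection theorem and identification of every weak* limit measure via its logarithmic potential on $\Omega_\rho$. The only cosmetic wrinkle is your appeal to Corollary \ref{cor3} in the endgame — the paper actually deduces Corollary \ref{cor3} from the argument of Corollary \ref{cor4}(b), not conversely — but since you also give the self-contained potential-theoretic identification, this is harmless.
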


\begin{remark} \emph{Assume A.3 holds, so that by Corollary \ref{cor4}(a) we have the following.
If $z_1=z_2=\cdots=z_u$, then $\mathcal{Z}\cap
G_\rho=\emptyset$. Otherwise:
\begin{enumerate}
\item[(a)] if $u^*=1$ (i.e., all the $\theta_j$'s are
rational), then the number of points in
$\mathcal{Z}\cap G_\rho$ is finite, namely at
most $2(u-1)\mathrm{\mathbf{q}}$;
\item[(b)] if $u^*=2$, then by fixing $\ell$ and letting $\alpha_2$ vary,
equation (\ref{eq73}) can be written as
\begin{equation}\label{eq74}
g_{0,\ell}(z)+g_{1,\ell}(z)w+\cdots
+g_{u-1,\ell}(z)w^{2(u-1)}=0,\quad |z|=1,\quad
1\leq \ell\leq \mathrm{\mathbf{q}},
\end{equation}
where $w=\varphi(t)$ and the $g_{k,\ell}(z)$'s
are certain polynomials, so that if
$f_1,\ldots,f_m$ are those algebraic functions
that are a solution to at least one of the
algebraic equations (\ref{eq74}) (see e.g.,
\cite[Chap. 5]{Knopp}), then $\mathcal{Z}\cap
G_\rho$ consists of the traces left in $G_\rho$
by the curves $\varphi^{-1}\circ
f_1(\mathbb{T}_1),\ldots,\varphi^{-1}\circ
f_m(\mathbb{T}_1)$, plus possibly the preimages
by $\varphi$ of some of the solution points
corresponding to the algebraic singularities of
the $f_k$'s;
\item[(c)] if $u^*\geq 2$, then $\mathcal{Z} \cap
G_\rho$ is, in general, a two dimensional domain.
\end{enumerate}}
\end{remark}

\begin{remark}\emph{Under Assumption A.3, finer
results similar to Thm. 4 of \cite{andrei} (see
also \cite[Thms. 11.1, 11.2]{Simon}) on the
separation, distribution and speed of convergence
to $L_\rho$ of those zeros of $P_n$ that lie near
$L_\rho$ but separated from the corners can be
derived from Theorem \ref{thm2}(a). }
\end{remark}

\subsection{The case of some special
lemniscates}\label{lemniscates}

In this section we consider an example where
Theorem \ref{thm1} fails to describe the behavior
of certain subsequences of $\{P_n\}_{n\geq 0}$.
In particular, it shows that Corollary
\ref{cor4}(b) does not necessarily hold in the
absence of Condition A.3.

Let $s \geq 2$ be a given integer. If we agree in
that
\[
2\pi(k-1)\leq \arg(w^s+1)<2\pi k\quad
\mathrm{whenever}\quad 2\pi(k-1)/s\leq
\arg(w)<2\pi k/s,
\]
then the function $w\mapsto (w^s+1)^{1/s}$ maps
$\mathbb{E}_1$ conformally onto the exterior of
the lemniscate of $s$ petals $\{z:|z^s-1|=1\}$
(see Figure \ref{fig1} below for $s=3$).

Let $R>1$ be a number that has been fixed, and let
\begin{equation}\label{eq57}
L_1:=\left\{z:|z^s-1|=R^s\right\}=\left\{z=(w^s+1)^{1/s}:|w|=R\right\}.
\end{equation}
Then, for this $L_1$ we have
\[
\psi(w)=(R^sw^s+1)^{1/s},\quad
\phi(z)=R^{-1}(z^s-1)^{1/s},
\]
\[
\rho=R^{-1},\quad
\Omega_\rho=\{z:|z^s-1|>1\},\quad
L_\rho=\{z:|z^s-1|=1\},\quad
G_\rho=\{z:|z^s-1|<1\},\quad
\]
and it is easily seen that $\psi$ satisfies
Conditions A.1 and A.2 of Subsection
\ref{asymptoticformulas} with
\[
\omega_k=R^{-1}e^{i(2k-1)\pi /s},\quad z_k=0\,,
\quad 0<\lambda_k=1/s\leq 1/2,\quad
k=1,2,\ldots,s.
\]

The important feature to note of this example is
that the function $H_n$ defined in (\ref{eq75})
is identically zero for every $n\not=s-2\mod s$.

\begin{thm}\label{thm4} Let $\left\{P_n\right\}_{n=0}^\infty$ be
 the sequence of polynomials orthonormal over the interior of the lemniscate
 $L_1=\left\{z:|z^s-1|=R^s\right\}$. Then,
\begin{enumerate}
\item[(a)] for all $n=sm+s-1$ with $m\geq 0$ an integer,
\begin{equation*}
P_n(z)=\sqrt{n+1}\,R^{-(n+1)}z^{s-1}(z^s-1)^m;
\end{equation*}

\item[(b)] for all $n=sm+l$ with $m\geq 0$ and $0\leq l\leq s-2$ integers,
\begin{equation*}
\frac{(-1)^m R^{n+1}\Gamma\left(
n+(3s-l-1)/s\right)P_{n}(z)}{n!\,\sqrt{n+1}} =
\frac{s^{(2s-l-1)/s}}{(s-l-1)!\,\Gamma((1+l-s)/s)}\frac{\partial^{s-l-2}L}{\partial
\zeta^{s-l-2}}(0,z)+R_n(z),\quad z\in G_\rho,
\end{equation*}
where $R_n(z)=\mathcal{O}(n^{-1})$ locally
uniformly on $G_\rho$ as $n\to\infty$,
$n\not=s-1\mod s$. More precisely,
\begin{equation*}
nR_n(z)=\frac{s^{(s-l-1)/s}}{\Gamma((1+l-2s)/s)}\left[
\frac{(s-1)}{2\,(s-l-2)!}\frac{\partial^{s-l-2}L}{\partial
\zeta^{s-l-2}}(0,z)-\frac{s^2}{(2s-l-2)!}\frac{\partial^{2s-l-2}L}{\partial
\zeta^{2s-l-2}}(0,z)\right]+\mathcal{O}\left(\frac{1}{n}\right)\,.
\end{equation*}
\end{enumerate}
\end{thm}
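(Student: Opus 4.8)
The plan is to treat parts (a) and (b) separately, though both rest on the explicit form of $\psi$, $\phi$, and $L(\zeta,z)$ for the lemniscate together with the machinery of Theorem \ref{thm3}. For part (a), the idea is that when $n=sm+s-1$, the conjectured polynomial $Q_n(z):=\sqrt{n+1}\,R^{-(n+1)}z^{s-1}(z^s-1)^m$ is already a genuine polynomial of degree $s-1+sm=n$ with positive leading coefficient $\sqrt{n+1}\,R^{-(n+1)}$, so it suffices to verify the orthogonality relations \eqref{eq64}. First I would make the substitution $z=\psi(w)=(R^sw^s+1)^{1/s}$ (equivalently $w=\phi(z)=R^{-1}(z^s-1)^{1/s}$), under which $G_1$ is parametrized by $\mathbb{D}_1$ and $z^s-1=R^sw^s$, $z^{s-1}dz = R^s w^{s-1}\,dw$ up to the branch bookkeeping built into the definition of $(\cdot)^{1/s}$. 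Then $Q_n(\psi(w))$ becomes, up to the constant, $R^{sm}w^{s-1}(R^s w^s)^m\cdot(\text{power of }R)= \text{const}\cdot w^{sm+s-1}\cdot(\text{jacobian factor})$, and the area integral over $G_1$ against any lower-degree $P_k$ reduces, via the reproducing structure on the disk and the fact that $\{z^j\}$-type monomials pull back to $w^j$-type monomials, to an integral of $w^{n}\overline{w^{k}}$ against a rotationally covariant weight $|\psi'(w)|^2$; since $|\psi'(w)|^2=|R^s w^{s-1}|^2|(R^sw^s+1)|^{2(1/s-1)}$ is a function of $w^s$ times $|w|^{2(s-1)}$, the angular integration kills all but finitely many Fourier modes and one checks directly that $Q_n\perp P_k$ for $k<n$ and $\|Q_n\|=1$. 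The only subtlety is the branch of $(w^s+1)^{1/s}$, but that was fixed in the statement preceding \eqref{eq57}, and the $s$-fold symmetry makes the angular integrals transparent.

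For part (b), the starting point is the asymptotic integral representation \eqref{eq2} of Theorem \ref{thm3}, specialized to this $\psi$. With $\rho=R^{-1}$, $\phi(\zeta)=R^{-1}(\zeta^s-1)^{1/s}$, and $z_k=0$ for all $k$, all the ``corners'' coincide at the origin, so the leading contribution in \eqref{eq2} comes from a neighborhood of $w=\omega_k=R^{-1}e^{i(2k-1)\pi/s}$ on $\mathbb{T}_\rho$, i.e.\ from $\zeta$ near $0$. I would change variables $\zeta=\psi(t)$ in the contour integral $\oint_{L_1}L(\zeta,z)[\phi(\zeta)]^{n+1}d\zeta$ to get $\oint_{\mathbb{T}_1}L(\psi(t),z)\psi'(t)\,t^{n+1}dt$, then push the contour down toward $\mathbb{T}_\rho=\mathbb{T}_{1/R}$; since $\psi$ and $L(\psi(t),z)\psi'(t)$ continue analytically to $\mathbb{E}_\rho$ except for the finitely many branch points at the $\omega_k$'s, the integral over $\mathbb{T}_1$ equals the integral over $\mathbb{T}_\rho$ plus a sum of loop integrals around the $\omega_k$. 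The dominant term is $\rho^{n+1}$ times a sum over $k$ of the local expansions at $\omega_k$. The key computation is to extract the full local expansion of $L(\psi(t),z)\psi'(t)$ at $t=\omega_k$: since $\psi(t)-z_k=\psi(t)\sim A_k(t-\omega_k)^{1/s}$ and $z_k=0$, one has $\varphi(\psi(t))\to\varphi(0)$ and $L(\psi(t),z)=\varphi'(\psi(t))\varphi'(z)/[\varphi(z)-\varphi(\psi(t))]^2$ is analytic in $\psi(t)$ near $0$, so $L(\psi(t),z)\psi'(t)$ has a Puiseux expansion in $(t-\omega_k)^{1/s}$ whose coefficients involve $\varphi^{(j)}(0)$, i.e.\ (by \eqref{eq69}) the derivatives $\partial^{m}L/\partial\zeta^m(0,z)$ evaluated at $\zeta=0$. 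Integrating $(t-\omega_k)^{j/s-1}t^{n+1}$ around $\omega_k$ produces a binomial coefficient $\binom{n+1}{-(j+1)/s}$ — equivalently the ratio of Gamma functions $\Gamma(n+(3s-l-1)/s)/n!$ appearing in the statement — and the residue of precisely the term $j=s-l-2$ survives as the leading term when $n\equiv l\bmod s$, because the powers of $e^{i(2k-1)\pi/s}$ coming from $\omega_k^{n+1}$ interfere constructively only for that residue class (this is exactly the assertion that $H_n\equiv 0$ unless $n\equiv s-2\bmod s$, refined one order further). Carrying the expansion to the next order in $(t-\omega_k)^{1/s}$ gives the $j=2s-l-2$ term and hence the displayed formula for $nR_n(z)$; the error beyond that is $\mathcal{O}(n^{-2})$ in the bracket, i.e.\ $\mathcal{O}(n^{-1})$ overall, with the contribution of the $\mathbb{T}_\rho$-integral and of $\epsilon_n(z)$ from Theorem \ref{thm3} being exponentially smaller and hence absorbed.

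I expect the main obstacle to be the bookkeeping in the second step: keeping track of the various $s$-th root branches, the precise constants $A_k$ and the phases $e^{i(2k-1)\pi/s}$, and matching the resulting combination of $\Gamma$-quotients and powers of $s$, $R$, and $(-1)^m$ to the exact normalization in the statement — in particular getting the constant $s^{(2s-l-1)/s}/[(s-l-1)!\,\Gamma((1+l-s)/s)]$ and its second-order counterpart right. The asymptotics of the binomial coefficient $\binom{n}{-\lambda_1-1}=\binom{n}{-(1/s)-1}$, which is $\sim \rho$-independent and behaves like $n^{-1-1/s}/\Gamma(-1/s)$ up to the relevant $\Gamma$-factor, will need to be matched with the prefactor $\Gamma(n+(3s-l-1)/s)/(n!\sqrt{n+1})$ via Stirling, and care is needed because the two natural normalizations (the one in Theorem \ref{thm1} versus the one chosen here, which is tuned to make the limit a clean multiple of $\partial^{s-l-2}L/\partial\zeta^{s-l-2}(0,z)$) differ by an $n$-dependent but convergent factor whose expansion itself contributes to $R_n$. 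Everything else — the contour deformation, the Puiseux expansion of $L(\psi(t),z)\psi'(t)$, the exponential smallness of the remaining pieces — is routine given Theorem \ref{thm3} and the explicit $\psi$.
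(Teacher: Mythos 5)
Your plan for part (b) is essentially the paper's own argument: one starts from the representation of Theorem \ref{thm3} in its integrated-by-parts form (\ref{eq28}), deforms the contour from $\mathbb{T}_1$ down to a circle $\mathbb{T}_\sigma$ with $\sigma<\rho$ together with two-sided slits along the segments $[\sigma_k,\omega_k]$, expands $\varphi'(z)/[\varphi(\zeta)-\varphi(z)]$ about $\zeta=z_k=0$ via (\ref{eq52}) so that the coefficients are the derivatives $\partial^jL/\partial\zeta^j(0,z)$, and observes that the phase sum $\sum_{k=1}^s e^{2(k-1)(n+j+2)i\pi/s}$ annihilates every $j$ except $j\equiv s-l-2\pmod s$; the surviving $j=s-l-2$ and $j=2s-l-2$ terms, integrated against the jump of $(t-\omega_1)^{(j+1)/s}$ across the slit, produce exactly the Gamma-quotients of the statement. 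Your anticipated difficulty (matching the $\Gamma$-factors, the powers of $s$ and $R$, and the sign $(-1)^m$) is genuine but is pure bookkeeping, carried out in the paper through the local expansions of $[\psi(w)]^{s-l-1}$ and $[\psi(w)]^{2s-l-1}$ at $\omega_1$. The only cosmetic difference is that you keep the kernel in the form $L(\psi(t),z)\psi'(t)t^{n+1}$ rather than integrating by parts first; this introduces an extra (integrable) factor $(t-\omega_k)^{1/s-1}$ but leads to the same computation.

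Part (a), however, has a genuine gap. The map $\psi(w)=(R^sw^s+1)^{1/s}$ does \emph{not} parametrize $G_1$ by $\mathbb{D}_1$: its branch points $\omega_k=R^{-1}e^{i(2k-1)\pi/s}$ lie \emph{inside} the unit disk, so $\psi$ is single-valued on $\mathbb{D}_1$ only after cutting (say along $[0,\omega_k]$), and the branch convention fixed before (\ref{eq57}) only determines $\psi$ on $\mathbb{E}_1$. Correspondingly, $z^j=(R^sw^s+1)^{j/s}$ does \emph{not} pull back to a $w^j$-type monomial, and for $j\not\equiv s-1\pmod s$ it is not even single-valued on circles $|w|=r<1/R$ (it jumps across the cuts), so its angular Fourier expansion contains all modes and the assertion that ``the angular integration kills all but finitely many Fourier modes'' fails as stated. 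The two pieces of the orthogonality check require two different devices, neither of which is the disk substitution: the cross-residue-class relations $\int_{G_1}z^{\alpha}\overline{z^{\beta}}\,dA=0$ for $\alpha\not\equiv\beta\pmod s$ follow at once from the invariance of $G_1$ and $dA$ under $z\mapsto e^{2\pi i/s}z$, with no change of variables at all; and the remaining inner products $\int_{G_1}z^{s-1}(z^s-1)^m\overline{z^{sm'+s-1}}\,dA$, $0\le m'\le m$, are evaluated by first converting them into contour integrals over $L_1$ via Green's formula, where the substitution $z=\psi(w)$ with $w\in\mathbb{T}_1$ is unambiguous and the integral is read off by inspection (vanishing for $m'<m$ and fixing the normalization for $m'=m$). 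You need this boundary-integral step, or a careful justification that the cut disk maps bijectively onto $G_1$ minus a null set together with the observation that $z^{sm'+s-1}\psi'(w)$ is then a genuine polynomial in $w$ of degree $<n$; as written, the area-measure Fourier argument does not close.
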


The partial derivatives of the function
$L(\zeta,z)$ occurring in Theorem \ref{thm4} can
be explicitly computed. Indeed, it is easy to see
that
\[
\varphi(z)=\frac{Rz}{[R^{2s}-1+z^s]^{1/s}}
\]
maps $G_1$ conformally onto $\mathbb{D}_1$. We
have $\varphi(0)=0$,
$\varphi'(0)=R(R^{2s}-1)^{-1/s}>0$, and hence
\[
L(\zeta,z)=\frac{(R^{2s}-1)^2[R^{2s}-1+z^s]^{(1-s)/s}[R^{2s}-1+\zeta^s]^{(1-s)/s}}{\left(
[R^{2s}-1+z^s]^{1/s}\zeta-z[R^{2s}-1+\zeta^s]^{1/s}\right)^2}.
\]

Moreover, it easily follows by mathematical
induction that for all $0\leq j\leq s-1$,
\[
\frac{\partial^j L(\zeta,z)}{\partial
\zeta^j}=\frac{(-1)^j(j+1)!(R^{2s}-1)^2[R^{2s}-1+z^s]^{(1-s+j)/s}[R^{2s}-1+\zeta^s]^{(1-s)/s}}{\left(
[R^{2s}-1+z^s]^{1/s}\zeta-z[R^{2s}-1+\zeta^s]^{1/s}\right)^{j+2}}+\zeta^{s-j}f_j(\zeta,z),
\]
where $f_j(\cdot,z)$ is analytic at $0$, and
therefore, for all $0\leq l \leq s-2$,
\begin{equation}\label{eq55}
\frac{\partial^{s-l-2}L}{\partial
\zeta^{s-l-2}}(0,z)=(s-l+1)!
z^{l-s}\left[\frac{R^{2s}-1}{R^{2s}-1+z^s}\right]^{(l+1)/s}\,.
\end{equation}

Thus, we obtain from Theorem \ref{thm4}(b) and
(\ref{eq55}) the following
\begin{cor}\label{cor5}For any compact set $F\subset G_\rho$, there is a
number $N_F$ such that if $n>N_F$ and $n\not=s-1
\mod s$, then $P_n$ has no zeros on $F$. As a
consequence, $\nu_n\wc \mu_{L_\rho}$ as
$n\to\infty$, $n\not=s-1\mod s$.
\end{cor}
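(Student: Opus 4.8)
The plan is to read off the conclusion directly from Theorem \ref{thm4}(b) together with the explicit computation (\ref{eq55}). First I would observe that, for the lemniscate $L_1=\{z:|z^s-1|=R^s\}$, every corner $z_k$ equals $0$ and every $\lambda_k=1/s$, so the only derivative of $L$ that appears on the right-hand side of Theorem \ref{thm4}(b), namely $\tfrac{\partial^{s-l-2}L}{\partial\zeta^{s-l-2}}(0,z)$, is given by (\ref{eq55}) as a nonvanishing analytic function on $G_\rho$ times the factor $[(R^{2s}-1)/(R^{2s}-1+z^s)]^{(l+1)/s}$, which does not vanish on $G_\rho$ since $R^{2s}-1+z^s=R^{2s}\varphi(z)^{-s}\ne 0$ there; also $z^{l-s}$ is nonzero for $z\ne 0$. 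Actually the cleanest way is to rewrite the leading term of Theorem \ref{thm4}(b) as a constant (depending only on $s,l$) times $z^{l-s}[(R^{2s}-1)/(R^{2s}-1+z^s)]^{(l+1)/s}$, and to note that this limit function has, in $G_\rho$, exactly one zero, namely a zero of order $s-l$ at the origin coming from $z^{l-s}$? No—$z^{l-s}$ has a \emph{pole} at $0$, not a zero, so in fact the limit function has no zeros in $G_\rho$ at all, only a pole at $0\in G_\rho$.

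The main step is then a normal-families / Hurwitz argument. Fix a compact set $F\subset G_\rho$. By Theorem \ref{thm4}(b), after multiplying $P_n$ by the explicit nonzero normalizing constant $(-1)^mR^{n+1}\Gamma(n+(3s-l-1)/s)/(n!\sqrt{n+1})$ (which never vanishes), the renormalized polynomials $\widetilde P_n$ converge uniformly on compact subsets of $G_\rho\setminus\{0\}$, as $n\to\infty$ with $n\not\equiv s-1\bmod s$, to the function
\[
g(z)=\frac{s^{(2s-l-1)/s}}{(s-l-1)!\,\Gamma((1+l-s)/s)}(s-l+1)!\,z^{l-s}\left[\frac{R^{2s}-1}{R^{2s}-1+z^s}\right]^{(l+1)/s},
\]
which is zero-free on $G_\rho$. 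The error term $R_n(z)=\mathcal O(n^{-1})$ is uniform on compacta of $G_\rho$, so in particular the convergence $\widetilde P_n\to g$ is uniform on a fixed compact neighborhood $F'\subset G_\rho\setminus\{0\}$ of $F$ once we enlarge $F$ to avoid the single point $0$ (if $0\in F$, treat a small disk around $0$ separately: near $0$ the polynomial $P_n$ and the function $g$ both blow up in the same way after clearing the pole $z^{l-s}$, so multiplying through by $z^{s-l}$ gives a zero-free limit on a disk about $0$ too). By Hurwitz's theorem, for all sufficiently large $n$ (with $n\not\equiv s-1\bmod s$) the function $\widetilde P_n$, and hence $P_n$, has no zeros on $F$. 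This yields the number $N_F$ asserted.

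For the distributional statement, I would argue exactly as in the proof of Corollary \ref{cor4}(b): since $P_n$ has $n$ zeros and, for $n>N_F$ with $n\not\equiv s-1\bmod s$, none of them lies in the fixed compact set $F\subset G_\rho$, all but a bounded number—in fact all—of the zeros lie in $\overline{\Omega}_\rho$ plus a shrinking neighborhood of $0$; combining with Corollary \ref{cor2} (no zeros in $\overline{\Omega}_\rho\setminus\cup D_\epsilon(z_k)$ for large $n$) and the fact that by part (a) the subsequence $n\equiv s-1\bmod s$ is excluded, one gets that for $n\to\infty$ along $n\not\equiv s-1\bmod s$ the counting measures $\nu_n$ put mass $1-o(1)$ on arbitrarily small neighborhoods of $L_\rho$, with the precise boundary distribution forced by Theorem \ref{thm2}(a) exactly as in Corollary \ref{cor3}; hence $\nu_n\wc\mu_{L_\rho}$ along this subsequence. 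The one point requiring care is the behavior at the corner $z_k=0$, which is an interior point of $G_\rho$ rather than a boundary point: I expect the main (minor) obstacle to be checking that only $o(n)$ zeros escape toward $0$, which follows because after clearing the common pole the renormalized $P_n$ converge to a zero-free limit on a full neighborhood of $0$, so in fact \emph{no} zeros accumulate at $0$ along the relevant subsequence.
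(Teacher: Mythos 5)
Your proposal follows the paper's own route: read off from Theorem \ref{thm4}(b) and formula (\ref{eq55}) that the suitably normalized $P_n$ converge locally uniformly on $G_\rho$ (along $n\not\equiv s-1 \bmod s$) to a zero-free limit, apply Hurwitz, and then repeat the argument of Corollary \ref{cor4}(b) for the weak-star convergence of $\nu_n$. That is exactly what the paper does, and the core of your argument is sound.

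One concrete error, which fortunately is harmless here but leads you into an unnecessary and incorrect digression: you assert that $0\in G_\rho$. In fact $|0^s-1|=1$, so $0$ lies on $L_\rho=\{z:|z^s-1|=1\}$ --- it is precisely the common corner $z_1=\cdots=z_s=0$ where the petals of the lemniscate meet --- and $G_\rho=\{z:|z^s-1|<1\}$ is the union of the $s$ open petals, none of which contains the origin. Consequently every compact $F\subset G_\rho$ is automatically bounded away from $0$, the limit function $g(z)=\mathrm{const}\cdot z^{l-s}\bigl[(R^{2s}-1)/(R^{2s}-1+z^s)\bigr]^{(l+1)/s}$ is analytic and zero-free on all of $G_\rho$, and the plain Hurwitz argument applies with no special treatment of any point. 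Your proposed remedy for the (nonexistent) case $0\in F$ is also not right as stated: $P_n$ is a polynomial and does not ``blow up'' anywhere, and a sequence of entire functions cannot converge locally uniformly to a function with a pole --- the only reason there is no contradiction is that the asymptotic formula of Theorem \ref{thm4}(b) is asserted only on $G_\rho$, which excludes $0$. The same remark disposes of your worry, in the second half, about zeros ``escaping toward $0$'': since $0\in L_\rho$, zeros accumulating at $0$ accumulate on the support of $\mu_{L_\rho}$ and cause no difficulty in the Helly/Carleson argument of Corollary \ref{cor4}(b). With that digression removed, your proof coincides with the paper's.
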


However, by Theorem \ref{thm4}(a), the zeros of
$P_{sm+s-1}(z)$ are fixed, namely, a zero of
multiplicity $s-1$ at the origin and the points
$\alpha_k:=e^{2\pi ik/s}$, $1\leq k\leq s$, each
of multiplicity $m$ and contained in $G_\rho$.
Therefore,
\[
\nu_{sm+s-1}\wc
\frac{1}{s}\sum_{k=1}^s\delta_{\alpha_k}\quad \mathrm{as}\ m\to\infty.
\]
Thus, Corollary \ref{cor4}(b) does not
necessarily hold in the absence of Condition A.3.

The assertion in Corollary \ref{cor5} concerning
the convergence of the measures $\nu_n$ can be
proven by employing the exact same argument used
in the proof of Corollary \ref{cor4}(b).

\begin{figure}
\centering
\includegraphics[scale=.4]{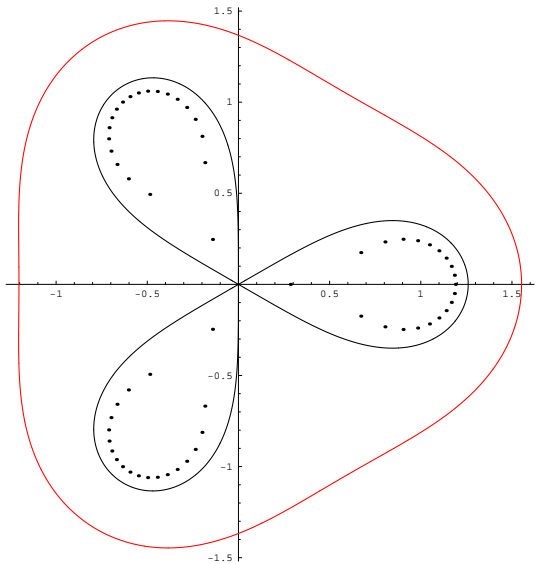}
\caption{Zeros of $P_{60}(z)$ for $L_1=\left\{z:|z^3-1|=(1.4)^3\right\}$.}\label{fig1}
\end{figure}

\section{Proofs}\label{proofs}

\subsection{Proofs of Theorem \ref{thm3} and Corollary \ref{cor1}}\label{subsection3.1}

We first make a short remark about the estimates
in (\ref{eq38}). Carleman stated his formula
(\ref{eq77}) \cite[Satz IV]{Carleman} as holding
uniformly on the exterior of the curve $L_1$ with
the estimate
$h_n(z)=\mathcal{O}\left(\sqrt{n}\rho^n\right)$.
However, as shown by Gaier in \cite[Thm. 2,
$\S$2]{Gaier1}, without any variation, Carleman's
proof equally yields that (\ref{eq77}) also holds
uniformly on any $L_r$, $\rho<r<1$, with the
estimate
$h_n(z)=\mathcal{O}\left(n^{-1/2}(\rho/r)^n\right)$.
Here we note that, indeed, from Carleman's proof
it actually follows that on any $L_r$ with
$1<r<\infty$, (\ref{eq77}) holds with the
estimate $h_n(z)=\mathcal{O}\left(\rho^n\right)$.
To see this, follow Gaier's presentation \cite[p.
13]{Gaier1} of Carleman's proof, and notice that,
in Gaier's notation,
\[
\sum_{j=1}^nj\rho^{2j-2-2n}=\left\{\begin{array}{ll}
                                                        n(n+1)/2,\  &\ \rho=1,
                                                        \\
                                                        {\displaystyle
                                                        \frac{n\rho^{2n}(\rho^2-1)+1-\rho^{2n}}{\rho^{2n}(1-\rho^2)^2}}
                                                        ,\ &\ \rho\not=1,
                                                      \end{array}
\right.\nonumber\\
\]
(beware that the meanings of $\rho$ and $r$ in
the present paper are exchanged in Gaier's
presentation) so that the quantity $C_n$ defined
in page 15 of \cite{Gaier1} indeed satisfies
\[
C_n=\left\{\begin{array}{ll}
\mathcal{O}\left(r^n\right),\  &\ \rho>1, \\
\mathcal{O}\left(\sqrt{n}r^n\right),\  &\ \rho=1, \\
\mathcal{O}\left(n^{-1/2}(r/\rho)^n\right),\  &\
r<\rho<1,
\end{array}
\right.
\]
from which one gets (\ref{eq38}) by following the
same line of argument that concludes the proof in
\cite{Gaier1}.

\paragraph{Proof of Theorem \ref{thm3}.} Equality
(\ref{eq71}) holds uniformly in $\zeta\in
\overline{G}_1$ for each fixed $z\in G_1$. We
then replace $P_k(\zeta)$ in (\ref{eq71}) by its
corresponding asymptotic representation given by
the right-hand side of (\ref{eq77}), multiply the
resulting equality by
$[\phi(\zeta)]^{-(n+1)}/2\pi i$ and integrate it
over $L_1$ to obtain
\begin{equation}\label{eq78}
\frac{\overline{\varphi'(z)}}{2\pi
i}\oint_{L_1}\frac{\varphi'(\zeta)[\phi(\zeta)]^{-(n+1)}d\zeta}
{\left[1-\overline{\varphi(z)}\varphi(\zeta)\right]^2}=\sum_{k=0}^\infty
\frac{\overline{P_k(z)}\sqrt{k+1}}{2\pi
i}\oint_{L_1}\phi'(\zeta)[\phi(\zeta)]^{k-n-1}[1+h_k(\zeta)]\,d\zeta.
\end{equation}

On the one hand, we have \[
d\zeta=\frac{i\varphi(\zeta)|\varphi'(\zeta)||d\zeta|}{\varphi'(\zeta)}\Rightarrow
\overline{d\zeta}
=-\frac{\varphi'(\zeta)d\zeta}{\overline{\varphi'(\zeta)}[\varphi(\zeta)]^2},
\]
so that
\begin{equation}\label{eq6}
\overline{\oint_{L_1}\frac{\varphi'(\zeta)[\phi(\zeta)]^{-(n+1)}d\zeta}
{\left[1-\overline{\varphi(z)}\varphi(\zeta)\right]^2}}=-
\oint_{L_1}\frac{\varphi'(\zeta)[\phi(\zeta)]^{n+1}d\zeta}
{\left[\varphi(\zeta)-\varphi(z)\right]^2}.
\end{equation}

On the other hand,
\[
\frac{1}{2\pi
i}\oint_{L_1}\phi'(\zeta)[\phi(\zeta)]^{k-n-1}\,d\zeta=\frac{1}{2\pi
i}\oint_{\mathbb{T}_1}t^{k-n-1}\,dt=\left\{\begin{array}{ll}
                                             0,\ & \mathrm{if}\ n\not=k, \\
                                             1,\ & \mathrm{if}\ n=k,
                                           \end{array}
\right. \quad k,n\geq 0,
\]
and since $h_k(\psi(w))$ is analytic in
$\mathbb{E}_\rho$,
\[
\frac{1}{2\pi
i}\oint_{L_1}\phi'(\zeta)[\phi(\zeta)]^{k-n-1}h_k(\zeta)\,d\zeta=\frac{1}{2\pi
i}\oint_{\mathbb{T}_1}t^{k-n-1}h_k(\psi(t))\,dt=0,
\quad \quad 0\leq k\leq n-1.
\]
Therefore, we get from (\ref{eq78}),
(\ref{eq6}) and the two previous relations that
\begin{equation}\label{eq5}
\frac{\varphi'(z)}{\sqrt{n+1}\,2\pi
i}\oint_{L_1}\frac{\varphi'(\zeta)[\phi(\zeta)]^{n+1}d\zeta}
{\left[\varphi(\zeta)-\varphi(z)\right]^2}=
P_n(z) -\epsilon_n(z),\qquad z\in G_1,
\end{equation}
where
\begin{equation}\label{eq3}
\epsilon_n(z)=-\sum_{j=0}^\infty\sqrt{1+j/(n+1)}\,\alpha_{j,n}\,P_{n+j}(z),\qquad
z\in G_1,
\end{equation}
with
\[
\alpha_{j,n}:=\overline{\frac{1}{2\pi
i}\oint_{L_1}\phi'(\zeta)[\phi(\zeta)]^{j-1}h_{n+j}(\zeta)\,d\zeta\,}.
\]

Now, from (\ref{eq38}) we obtain that for every
$1<\eta<\infty$, there is a constant $M_\eta$
that only depends on $\eta$, such that for all
integers $j,\,n\geq 0$,
\begin{equation}\label{eq39}
\left|\alpha_{j,n}\right| = \left|\frac{1}{2\pi
i}\oint_{L_\eta}\phi'(\zeta)[\phi(\zeta)]^{j-1}h_{n+j}(\zeta)\,d\zeta\right|\leq
M_\eta \eta^{j}\rho^{n+j}.
\end{equation}
By Carleman's formula, for every $r\in
(\rho,1/\rho)$, there is some constant $K_r$ such
that $|P_n(z)|\leq K_r\sqrt{n+1}r^n$ for all
$z\in\overline{G}_{r}$, so that for $1<\eta<(\rho
r)^{-1}$
\begin{eqnarray*}
\sum_{j=0}^\infty\sqrt{1+j/(n+1)}\,\left|\overline{\alpha_{j,n}}\,P_{n+j}(z)\right|&\leq&
\sqrt{n+1}(r\rho)^nM_\eta
K_r\sum_{j=0}^\infty\left(1+\frac{j}{n+1}\right)
(\eta r\rho)^j\\
&\leq & \frac{\sqrt{n+1}(r\rho)^nM_\eta
K_r}{\left(1-\eta r\rho\right)^2}\,,\quad z\in
\overline{G}_r\,.
\end{eqnarray*}
This shows that the series in the right-hand side
of (\ref{eq3}) that defines $\epsilon_n(z)$ for
$z\in G_1$, indeed converges locally uniformly on
$G_{1/\rho}$ to an analytic function (the
analytic continuation of $\epsilon_n(z)$). In the
same way one sees that if for some
$\tau\in[0,1/\rho)$ and constant $K_\tau$,
$|P_n(z)|\leq K_\tau\sqrt{n+1} \tau^n$ for all
$z\in E\subset G_{1/\rho}$, then for any fixed
$1<\eta<(\rho \tau)^{-1}$,
\[
|\epsilon_n(z)|\leq
\frac{\sqrt{n+1}(\tau\rho)^nM_\eta
K_\tau}{\left(1-\eta
\tau\rho\right)^2}\quad\forall\, z\in E.
\]
The proof of Theorem \ref{thm3} is complete.

\paragraph{Proof of Corollary \ref{cor1}.} Equality (\ref{eq28}) holds
for any given conformal map $\varphi$ of $G_1$
onto $\mathbb{D}_1$. Let us pick specifically a
map $\varphi$ such that $\varphi(z)\not=0$ for
all $z\in G_1\cap\Omega_\rho$. Such a map has an
analytic and univalent continuation to all of
$G_{1/\rho}$ (given by (\ref{eq66}) and
(\ref{eq70})), so that for arbitrary $\eta\in
(1,1/\rho)$,
\begin{equation}\label{eq40}
P_{n}(z)= \frac{\sqrt{n+1}\,\varphi'(z)}{2\pi
 i} \oint_{L_\eta}
 \frac{\phi'(\zeta)[\phi(\zeta)]^{n}d\zeta}{\varphi(\zeta)-\varphi(z)}+\epsilon_n(z),\quad
 z\in G_1.
\end{equation}
The right-hand side of (\ref{eq40}) is indeed a
well-defined analytic function (in the variable
$z$) on $G_{\eta}$, so that by analytic
continuation, (\ref{eq40}) actually holds for all
$z\in G_{\eta}$. Now, for every $z\in L_1$, the
function (in the variable $\zeta$)
$\varphi'(z)/\left[\varphi(\zeta)-\varphi(z)\right]$
is analytic at all points of
$G_{1/\rho}\cap\Omega_\rho$, with the exception
of the point $z$, where it has a simple pole of
residue $1$. From the residue theorem, it follows
that for every $r\in (\rho,1)$,
\begin{equation}\label{eq41}
P_{n}(z)=\sqrt{n+1}\phi'(z)[\phi(z)]^{n}
+\frac{\sqrt{n+1}\,\varphi'(z)}{2\pi
 i} \oint_{L_r}
 \frac{\phi'(\zeta)[\phi(\zeta)]^{n}d\zeta}{\varphi(\zeta)-\varphi(z)}+\epsilon_n(z),\quad z\in
 L_1.
\end{equation}
Now, fix $r'\in (\rho,1)$, so that $L_{r}$ is contained in the interior of $L_{r'}$  whenever $\rho<r<r'$, and therefore
\begin{eqnarray}\label{eq42}
\left|\frac{1}{2\pi
 i} \oint_{L_r}
 \frac{\phi'(\zeta)[\phi(\zeta)]^{n}d\zeta}{\varphi(\zeta)-\varphi(z)}\right|&=&\left|\frac{1}{2\pi
 i} \oint_{\mathbb{T}_r}
 \frac{t^{n}dt}{\varphi(\psi(t))-\varphi(z)}\right|
 \leq\frac{r^{n+1}}{\dist\left(\mathbb{T}_1,\varphi(L_r)\right)}\nonumber\\ &\leq&\frac{r^{n+1}}{\dist\left(\mathbb{T}_1,\varphi(L_{r'})\right)}, \quad \rho<r<r',\quad z\in L_1.
\end{eqnarray}
Since by Theorem \ref{thm3}, we have
\[
P_n(z)=\mathcal{O}\left(\sqrt{n}\right)\Rightarrow
e_n(z)=\mathcal{O}\left(\sqrt{n}\rho^n\right),\quad
z\in L_1,
\]
it follows from (\ref{eq41}) and (\ref{eq42}) by letting $r\to \rho$ that
\begin{equation*}
P_{n}(z)=\sqrt{n+1}\phi'(z)[\phi(z)]^{n}
+\mathcal{O}\left(\sqrt{n}\rho^n\right)=\sqrt{n+1}\phi'(z)[\phi(z)]^{n}\left[1
+\mathcal{O}\left(\rho^n\right)\right],\quad z\in L_1.
\end{equation*}
The proof of Corollary \ref{cor1} is complete.

\subsection{Asymptotic expansions}\label{lehmanexpansion}
Through the remaining of this paper, we assume
that the map $\psi$ satisfies Conditions A.1 and
A.2 stated in Subsection
\ref{asymptoticformulas}.

Let $T_k$ be a small open circular arc of
$\mathbb{T}_\rho$ centered at $\omega_k$ such
that
$\overline{T_k}\cap\{\omega_1,\ldots,\omega_s\}=\{\omega_k\}$.
The set $T_k\setminus\{\omega_k\}$ consists of
two circular arcs, say $T_k^+$, $T_k^-$, and by
our Assumption A.1 on $L_\rho$, there exist
simple analytic arcs
$\mathcal{L}_k^+\supset\psi\left(\overline{T_k^+}\right)$
and
$\mathcal{L}_k^-\supset\psi\left(\overline{T_k^-}\right)$
of which $z_k$ is an interior point. Hence, the
map $\psi$, originally defined on
$\mathbb{E}_\rho$, can be continued by the
Schwarz reflection principle for analytic arcs
\cite{Davis} across both $T_k^+$ and $T_k^-$.
Since the images of $\mathcal{L}_k^+$ and
$\mathcal{L}_k^-$ in such reflections are again
simple analytic arcs containing $z_k$ as an
interior point, by applying subsequent
reflections we can continue $\psi$ near
$\omega_k$ onto the entire logarithmic Riemann
surface $\mathcal{S}_{\omega_k}$ with branch
point at $\omega_k$.

Let the functions $(w-\omega_k)^{l+j\lambda_k}$,
$l\geq 0$, $j\geq 1$, and $\log(w-\omega_k)$ be
defined in $\mathcal{S}_{\omega_k}$. In what follows
we abbreviate by putting $y=w-\omega_k$. Lehman
\cite[Thm. 1]{Lehman} proved that when
$\lambda_k>0$, $\psi$ has the following
asymptotic expansion about $\omega_k$: if
$\lambda_k$ is \emph{irrational}, then
\begin{equation}\label{FabEq1}
\psi(w)=\psi(\omega_k)+\sum_{l= 0}^\infty \sum_{j=1}^\infty
c^{k}_{lj0}y^{l+j\lambda_k} ,\quad c^{k}_{010}\not=0\,;
\end{equation}
if $\lambda_k=p/q$ is a \emph{fraction} reduced to lowest terms,
then
\begin{equation}\label{FabEq2}
\psi(w)=\psi(\omega_k)+\sum_{l=0}^\infty
\sum_{j=1}^q \sum_{m=0}^{\lfloor
l/p\rfloor}c^{k}_{ljm}y^{l+j\lambda_k}(\log
y)^m,\quad c^{k}_{010}\not=0.
\end{equation}

The terms in the above series are assumed to be arranged in an order
such that a term of the form $y^{l+j\lambda_k}(\log y)^m$ precedes
one of the form $y^{l'+j'\lambda_k}(\log y)^{m'}$ if either
$l+j\lambda_k<l'+j'\lambda_k$ or $l+j\lambda_k=l'+j'\lambda_k$ and
$m>m'$. We write in (\ref{FabEq1}) $c^k_{lj0}$ instead of
simply $c^k_{lj}$ when $\lambda_k$ is irrational,
because this will allow us to express many of the
relations that follow in one single statement
without having to distinguish between $\lambda_k$
being irrational or rational.

The precise meaning of these expansions is the
following: if according to the order explained
above, (\ref{FabEq1}) and (\ref{FabEq2}) are
written in the form
\[
\psi(w)=\psi(\omega_k)+\sum_{n=1}^\infty \chi_n(y),
\]
then for all $N\geq 1$,
\[
\psi(w)-\psi(\omega_k)-\sum_{n=1}^N \chi_n(y)=o\left(\chi_N(y)\right)
\]
as $w\to \omega_k$ ($y\to 0$) from any finite sector
$\vartheta_1\leq\arg(w-\omega_k)\leq \vartheta_2$ of
$\mathcal{S}_{\omega_k}$.

The coefficients $c^k_{ljm}$ in (\ref{FabEq1})
and (\ref{FabEq2}) depend on the values assigned
to the functions $(w-\omega_k)^{l+j\lambda_k}$,
$\log(w-\omega_k)$ at a specified point of
$\mathcal{S}_{\omega_k}$. We shall assume that the
values of $\psi$ in $\mathbb{E}_\rho$  define
$\psi$ in the sector
$\Theta_k-\pi<\arg(w-\omega_k)<\Theta_k+\pi$ of
$\mathcal{S}_{\omega_k}$, and that for every $w$ in
this sector,
\[
(y)^{l+j\lambda_k}=|y|^{l+j\lambda_k}e^{i(l+j\lambda_k)\arg(y)},\quad
\log y=\log|y|+i\arg(y), \quad y=w-\omega_k.
\]

Let
\[
A_k:=c^{k}_{010}=\lim_{\underset{w\in\mathbb{E}_\rho}{w\to
\omega_k}}\frac{\psi(w)-\psi(\omega_k)}{(w-\omega_k)^{\lambda_k}}
\ (\not=0), \quad 1\leq k\leq s,
\]
so that the following relations follow from (\ref{FabEq1}) and (\ref{FabEq2}). \emph{If} $1/2<\lambda_k<1$, then
\begin{equation}\label{eq20}
\psi(w)=\psi(\omega_k)+A_k y^{\lambda_k}+c^{k}_{020}y^{2\lambda_k}+c^{k}_{110}y^{1+\lambda_k}+
\mathcal{O}\left(y^{3\lambda_k}\right);
\end{equation}
\emph{if} $\lambda_k=1/2$, then
\begin{equation}\label{eq21}
\psi(w)=\psi(\omega_k)+A_k y^{\lambda_k}+c^{k}_{020}y^{2\lambda_k}+c^{k}_{111}y^{1+\lambda_k}\log y+
\mathcal{O}\left(y^{3\lambda_k}\right);
\end{equation}
\emph{if} $0<\lambda_k<1/2$ and $\upsilon_k$ is sufficiently small (say, $0<\upsilon_k<\min\{\lambda_k,1-2\lambda_k\}$), then
\begin{equation}\label{eq22}
\psi(w)=\psi(\omega_k)+A_k y^{\lambda_k}+c^{k}_{020}y^{2\lambda_k}+c^{k}_{030}y^{3\lambda_k}+
o\left(y^{3\lambda_k+\upsilon_k}\right);
\end{equation}
\emph{if $1<\lambda_k<2$}, then
\begin{equation}\label{eq23}
\psi(w)=\psi(\omega_k)+A_k
y^{\lambda_k}+c^{k}_{110}y^{1+\lambda_k}+c^{k}_{020}y^{2\lambda_k}+
\mathcal{O}\left(y^{2+\lambda_k}\right)
\end{equation}
(notice that if $1<\lambda_k=p/q<2$, then $p\geq
3$, $q\geq 2$, and no $\log$-terms correspond to
$l=0,1,2$).

We analyze now the behavior of
$\varphi'(z)/\left[\varphi(\psi(w))-\varphi(z)\right]$
as $w\to\omega_k$, where $\varphi$ is a conformal
map of $G_1$ onto $\mathbb{D}_1$. For given
$\delta>0$ and $t\in\mathbb{C}$, we put
\[
D_\delta(t):=\{w:|w-t|<\delta\}.
\]

We have already noticed in the introduction that
the kernel
\[
L(\zeta,z):=
\frac{\varphi'(\zeta)\varphi'(z)}{[\varphi(\zeta)-\varphi(z)]^2}, \quad \zeta,\,z\in G_1,
\]
is independent of the map $\varphi$. For fixed
$z\in G_1$, $L(\cdot,z)$ is analytic on
$G_1\setminus\{z\}$. Hence, if $\epsilon>0$ is
such that $D_\epsilon(z_k)\subset G_1$, then from
Taylor's inequality we find that for all
$\zeta\in D_\epsilon(z_k)$, $z\in G_1\setminus
D_\epsilon(z_k)$, and integer $N\geq 1$,
\begin{equation}\label{eq52}
\frac{\varphi'(z)}{\varphi(\zeta)-\varphi(z)}=\frac{\varphi'(z)}{\varphi(z_k)-\varphi(z)}
-\sum_{j=0}^{N-1}\frac{\partial^{j}
L}{\partial\zeta^j}
(z_k,z)\frac{(\zeta-z_k)^{j+1}}{(j+1)!}+R_{N}(\zeta,z_k,z),
\end{equation}
with
\[
|R_{N}(\zeta,z_k,z)|\leq
\frac{2|\zeta-z_k|^{N+1}}{(N+1)!}\max\left\{\left|\frac{\partial^N
L}{\partial x^N}(x,z)\right|:|x-z_k|\leq r, \
z\in G_1\setminus D_\epsilon(z_k)\right\},\quad
|\zeta-z_k|\leq r<\epsilon\,.
\]
Combining this for $N=2$ with (\ref{eq20}), (\ref{eq21}),
(\ref{eq22}) and (\ref{eq23}) gives the
following: \emph{if $0<\lambda_k<1$}, then
\begin{eqnarray}\label{eq10}
\frac{\varphi'(z)}{\varphi(\psi(w))-\varphi(z)}&=&\frac{\varphi'(z)}{\varphi(z_k)-\varphi(z)}
-L(z_k,z)A_ky^{\lambda_k}-\left[
L(z_k,z)c^{k}_{020}+\frac{\partial
L}{\partial\zeta}
(z_k,z)\frac{(A_k)^2}{2}\right]y^{2\lambda_k}\nonumber\\
&&-\left\{\begin{array}{ll}
            \mathcal{O}\left(y^{1+\lambda_k}\right),\  &\ 1/2<\lambda_k<1,  \\
            \mathcal{O}\left(y^{3\lambda_k}\right),\  &\ 0<\lambda_k<1/2,
            \\
            L(z_k,z)c^k_{111}y^{3\lambda_k}\log y+\mathcal{O}(y^{3\lambda_k}),\  &\
            \lambda_k=1/2,
          \end{array}
\right.
\end{eqnarray}
while \emph{for $1<\lambda_k<2$},
\begin{equation}\label{eq11}
\frac{\varphi'(z)}{\varphi(\psi(w))-\varphi(z)}=\frac{\varphi'(z)}{\varphi(z_k)-\varphi(z)}
-L(z_k,z)A_ky^{\lambda_k}-
L(z_k,z)c^{k}_{110}y^{1+\lambda_k}+\mathcal{O}\left(y^{2\lambda_k}\right).
\end{equation}
These relations hold uniformly in $z$ on compact
subsets of $G_1\setminus\{z_k\}$ as $w\to \omega_k$
($y\to 0$) from any finite sector
$\vartheta_1\leq\arg(w-\omega_k)\leq \vartheta_2$ of
$\mathcal{S}_{\omega_k}$. That is, if $\delta>0$ is
so small that
\[\left\{\psi(w):
\vartheta_1\leq\arg(w-\omega_k)\leq \vartheta_2,\
|w-\omega_k|\leq \delta\right\}\subset
D_\epsilon(z_k),
\] then (\ref{eq10}) and
(\ref{eq11}) hold uniformly for $z\in
G_1\setminus D_\epsilon(z_k)$ and $w$ satisfying
$\vartheta_1\leq\arg(w-\omega_k)\leq \vartheta_2$,
$|w-\omega_k|\leq \delta$.

Now, given $k\in\{1,2,\ldots,s\}$, the Laurent expansion of $L(\cdot,z_k)$ about $z_k$ has the form
\[
L(\zeta,z_k)=\frac{1}{(\zeta-z_k)^2}+a_{k,0}+a_{k,1}(\zeta-z_k)+a_{k,2}(\zeta-z_k)^2+\cdots,
\]
with
\[
a_{k,0}=\frac{2\varphi'(z_k)\varphi'''(z_k)-3\left[\varphi''(z_k)\right]^2}{12\left[\varphi'(z_k)\right]^2},
\]
so that
\begin{equation}\label{eq13}
\frac{\varphi'(z_k)}{\varphi(\psi(w))-\varphi(z_k)}=\frac{1}{\psi(w)-z_k}
-
\frac{\varphi''(z_k)}{2\varphi'(z_k)}-\sum_{j=0}^\infty
\frac{a_{k,j}(\psi(w)-z_k)^{j+1}}{j+1},
\end{equation}
and we obtain from (\ref{eq13}), (\ref{eq20}), (\ref{eq21}), (\ref{eq22}) and (\ref{eq23}) that \emph{if $ 1/2<\lambda_k<1$,}
\begin{equation}\label{eq24}
\frac{\varphi'(z_k)}{\varphi(\psi(w))-\varphi(z_k)}=\frac{1}{A_ky^{\lambda_k}}-\frac{\varphi''(z_k)}{2\varphi'(z_k)}-\frac{c^k_{020}}{(A_k)^2}
-\frac{c^k_{110}y^{1-\lambda_k}}{(A_k)^2}+\mathcal{O}(y^{\lambda_k}),
\end{equation}
\emph{if $0<\lambda_k<1/2$} and  $\upsilon_k$ is sufficiently small (say, $0<\upsilon_k<\min\{\lambda_k,1-2\lambda_k\}$), then
\begin{equation}\label{eq25}
\frac{\varphi'(z_k)}{\varphi(\psi(w))-\varphi(z_k)}=\frac{1}{A_ky^{\lambda_k}}-\frac{\varphi''(z_k)}{2\varphi'(z_k)}-\frac{c^k_{020}}{(A_k)^2}
+\left[\frac{(c^k_{020})^2-A_kc^k_{030}-a_{k,0}(A_k)^4}{(A_k)^3}\right]y^{\lambda_k}
+\mathcal{O}(y^{\lambda_k+\upsilon_k}),
\end{equation} \emph{if $\lambda_k=1/2$},
\begin{equation}\label{eq26}
\frac{\varphi'(z_k)}{\varphi(\psi(w))-\varphi(z_k)}
=\frac{1}{A_ky^{\lambda_k}}-\frac{\varphi''(z_k)}{2\varphi'(z_k)}-\frac{c^k_{020}}{(A_k)^2}
+\left[\frac{(c^k_{020})^2-a_{k,0}(A_k)^4}{(A_k)^3}\right]y^{\lambda_k}+\mathcal{O}\left(y^{\lambda_k}\log
y\right),
\end{equation}
and finally, \emph{if $1<\lambda_k<2$}, then
\begin{equation}\label{eq27}
\frac{\varphi'(z_k)}{\varphi(\psi(w))-\varphi(z_k)}
=\frac{1}{A_ky^{\lambda_k}}-\frac{\varphi''(z_k)}{2\varphi'(z_k)}-\frac{c^k_{110}y^{1-\lambda_k}}{(A_k)^2}
+\mathcal{O}\left(1\right).
\end{equation}
Relations (\ref{eq24}), (\ref{eq25}),
(\ref{eq26}) and (\ref{eq27}) hold uniformly as
$w\to \omega_k$ from any finite sector
$\vartheta_1\leq\arg(w-\omega_k)\leq \vartheta_2$
of $\mathcal{S}_{\omega_k}$.

\subsection{Auxiliary lemmas}

Recall we are using the notation
\[
\mathbb{T}_r:=\{w:|w|=r\},\quad
\mathbb{D}_r:=\{w:|w|<r\},\quad
\mathbb{E}_r:=\{w:r<|w|\leq \infty\}.
\]

For $\delta\in (0,\rho)$ and $1\leq k\leq s$, we
have also already defined
\[
\Sigma_{\delta,k}:=\left\{w:\rho-\delta<|w|<\rho^2/(\rho-\delta),\
\Theta_k-\delta<\arg(w)<\Theta_k+\delta \right\}.
\]
Note that $\Sigma_{\delta,k}$ is the reflection
of itself about the circle $\mathbb{T}_\rho$. Let
\[
T^+_{\delta,k}:=\left\{w\in \mathbb{T}_\rho:\
\Theta_k\leq \arg(w)<\Theta_k+\delta
\right\},\quad T^-_{\delta,k}:=\left\{w\in
\mathbb{T}_\rho:\ \Theta_k-\delta< \arg(w)\leq
\Theta_k \right\},
\]
and
\[
\Sigma^+_{\delta,k}:=\Sigma_{\delta,k}\setminus
T^+_{\delta,k},\quad
\Sigma^-_{\delta,k}:=\Sigma_{\delta,k}\setminus
T^-_{\delta,k}.
\]

For all $\delta>0$ sufficiently small, the
mapping $\psi$ has analytic continuations
$\psi_+$, $\psi_-$ from $\mathbb{E}_\rho$ to
$\Sigma^+_{\delta,k}$, $\Sigma^-_{\delta,k}$,
respectively. To see this, fix a number
$\delta'>0$ such that
$\overline{T^-_{\delta'\!,k}}\,\cap
\,\{\omega_1,\ldots,\omega_s\}=\{\omega_k\}$. By
Assumption A.1, there is a simple analytic arc
$\mathcal{L}^-_k$, of which $z_k$ is an interior
point, such that $\mathcal{L}^-_k\supset
\psi\left(T^-_{\delta',k}\right)$. Let $z^*$
denote the Schwarz reflection of $z$ about
$\mathcal{L}^-_k$ (see \cite{Davis}), which is
well-defined in some neighborhood $U^k$ of
$\mathcal{L}^-_k$. Then, for every $\delta\in
(0,\delta')$ so small that
$\psi\left(\overline{\mathbb{E}}_\rho\cap
\Sigma_{\delta,k} \right)\subset U^k$, the
analytic continuation $\psi_+$ of $\psi$ from
$\mathbb{E}_\rho$ to $\Sigma^+_{\delta,k}$ is
given by
\[
\psi_+(w)=
\left[\psi\left(\rho^2/\overline{w}\right)\right]^*,\quad
w\in \Sigma^+_{\delta,k}\cap \mathbb{D}_\rho.
\]

For $a,\,b\in \mathbb{C}$, we denote by $[a,b]$
the oriented closed segment that starts at $a$
and ends at $b$. A similar meaning is attached to
$(a,b)$, $(a,b]$ and $[a,b)$.

For every $0<\sigma<\rho$, we define
\[
\sigma_k:=\sigma \omega_k/\rho=\sigma
e^{i\Theta_k}\,,\quad 1\leq k\leq s,
\]
and the contour
\[
\Gamma_\sigma:=\mathbb{T}_\sigma\cup\left(\cup_{k=1}^s[\sigma_k,\omega_k]\right)\,.
\]
Each segment $[\sigma_k,\omega_k]$ of
$\Gamma_\sigma$ is thought of as having two
sides, and the exterior of the contour
$\Gamma_\sigma$, denoted by
$\mathrm{ext}(\Gamma_\sigma)$, is understood to
be the unbounded component of
$\overline{\mathbb{C}}\setminus\Gamma_\sigma$, that is,
\[
\mathrm{ext}(\Gamma_\sigma)=\mathbb{E}_\sigma\setminus\left(\cup_{k=1}^s[\sigma_k,\omega_k]\right)\,.
\]

\begin{lem}\label{lem1} Let $\delta\in (0,\rho)$ be such that $\Sigma_{\delta,1},\Sigma_{\delta,2},
\dots,\Sigma_{\delta,s}$ are pairwise disjoint
and $\psi$ has analytic continuations $\psi_\pm$
to $\Sigma^\pm_{\delta,k}$ for each $1\leq k\leq
s$. Then there exists $\sigma\in
(\rho-\delta,\rho)$ such that
\begin{enumerate}
\item[(a)] $\psi$ has an analytic continuation from
$\mathbb{E}_\rho$ to $\ext(\Gamma_\sigma)$ with
continuous boundary values on $\Gamma_\sigma$
when viewing each $[\sigma_k,\omega_k]$ as having
two sides;

\item[(b)] $\psi$ is one-to-one on $
A_{\delta,\sigma}:=\overline{\mathbb{E}}_\sigma\setminus
\cup_{k= 1}^s \overline{\Sigma_{\delta,k}}$ and
\[
\psi\left(A_{\delta,\sigma}\right)\cap\overline{\psi\left(\ext(\Gamma_\sigma)\setminus
A_{\delta,\sigma}\right)}  =\emptyset.
\]
\end{enumerate}
\end{lem}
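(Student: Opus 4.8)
I would build the continuation of $\psi$ to $\ext(\Gamma_\sigma)$ out of the Schwarz reflections of $\psi$ across the finitely many analytic arcs composing $L_\rho$, and then invoke the monodromy theorem: the complement of $\ext(\Gamma_\sigma)$ in $\overline{\mathbb C}$ is the compact connected set $\overline{\mathbb D}_\sigma\cup\bigcup_{k=1}^s[\sigma_k,\omega_k]$, so $\ext(\Gamma_\sigma)$ is simply connected, and it is enough to continue $\psi$ analytically along every path in $\ext(\Gamma_\sigma)$ that starts in $\mathbb E_\rho$. The radial slits $[\sigma_k,\omega_k]$ are there precisely to prevent such a path from encircling a corner $\omega_k$: by A.2 the two arcs meeting at $\omega_k$ form the exterior angle $\lambda_k\pi\neq\pi$, so the reflections in those two arcs genuinely disagree near $\omega_k$, and a loop around $\omega_k$ would carry $\psi$ onto a nontrivial sheet of the logarithmic surface $\mathcal S_{\omega_k}$.

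Concretely, for part (a) I fix $\delta$ as in the hypothesis and, for each open arc $\ell=\ell_{k,j}$ of $\mathbb T_\rho\setminus\{\omega_1,\dots,\omega_s\}$, use A.1 and the reflection principle for analytic arcs to continue $\psi$ from $\mathbb E_\rho$ across $\ell$ by reflection in $\psi(\overline\ell)$. Since an analytic arc extends analytically past its endpoints, this reflection is defined in full neighborhoods of $\psi(\omega_k)$ and $\psi(\omega_j)$, so the continuation $\Psi_\ell$ of $\psi$ is a single-valued analytic function on a fixed open set $N_\ell$ containing $\overline\ell$ and full disks about $\omega_k$ and $\omega_j$. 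I then choose $\sigma\in(\rho-\delta,\rho)$ so close to $\rho$ that $\mathbb T_\sigma$, each $[\sigma_k,\omega_k]$, and each closed sector $\{\sigma\le|w|\le\rho,\ \Theta_k\le\arg w\le\Theta_j\}$ lie in the appropriate $N_\ell$. Any path in $\ext(\Gamma_\sigma)$ from $\mathbb E_\rho$ meets $\{\sigma<|w|<\rho\}$ only inside one of the $s$ sectors delimited by consecutive slits, which it enters across the arc $\ell_{k,j}$ where $\psi$ is continued by $\Psi_{\ell_{k,j}}$; as it cannot wind around any $\omega_k$, $\psi$ continues along it, and simple connectivity produces a single-valued analytic $\Psi$ on $\ext(\Gamma_\sigma)$ extending $\psi$. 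Since $\mathbb T_\sigma$, each side of each slit, and each $\omega_k$ lie in the interior of some $N_\ell$, $\Psi$ is in fact analytic across one side of every boundary arc of $\Gamma_\sigma$, which gives the asserted continuous boundary values; this proves (a).

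For part (b) I first record that $\psi$ is injective on $\overline{\mathbb E}_\rho\setminus\{\omega_1,\dots,\omega_s\}$ (because $\psi|_{\mathbb E_\rho}$ is a conformal bijection onto $\Omega_\rho$, $\psi(\mathbb T_\rho)=L_\rho$ is disjoint from $\Omega_\rho$, and by A.1 any identification $\psi(w_1)=\psi(w_2)$ with $w_1\neq w_2$ on $\mathbb T_\rho$ involves only corners), and that its reflection extension has non-vanishing derivative on $\mathbb T_\rho\setminus\{\omega_1,\dots,\omega_s\}$ (otherwise $\psi(\overline{\ell_{k,j}})$ could not be a simple analytic arc, or $\psi$ could not be univalent on $\mathbb E_\rho$). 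Injectivity of $\psi$ on $A_{\delta,\sigma}$ then follows by a compactness argument as $\sigma\uparrow\rho$: a hypothetical failure gives $a_n\neq b_n$ in $A_{\delta,\sigma_n}$ with $\psi(a_n)=\psi(b_n)$; subsequential limits $a,b$ lie in $\overline{\mathbb E}_\rho\setminus\bigcup_m\Sigma_{\delta,m}\subset\overline{\mathbb E}_\rho\setminus\{\omega_1,\dots,\omega_s\}$, injectivity there forces $a=b$, and local injectivity of $\psi$ at the non-corner point $a$ then contradicts $a_n\neq b_n$ for large $n$. The disjointness assertion is handled the same way: from $\psi(a_n)=\Psi(b_n)$ with $a_n\in A_{\delta,\sigma_n}$ and $b_n\in\ext(\Gamma_{\sigma_n})\setminus A_{\delta,\sigma_n}\subset\bigcup_m\overline{\Sigma_{\delta,m}}$, the fact that the part of $\ext(\Gamma_{\sigma_n})\setminus A_{\delta,\sigma_n}$ lying in $\mathbb D_\rho$ is contained in $\{\sigma_n<|w|<\rho\}$ forces a subsequential limit $b$ with $|b|\geq\rho$, $b\in\overline{\Sigma_{\delta,k}}$, while $a_n\to a\in\overline{\mathbb E}_\rho\setminus\bigcup_m\Sigma_{\delta,m}$ and $\psi(a)=\Psi(b)$. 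If $b\in\mathbb E_\rho$ then $\psi(a)\in\Omega_\rho$, which puts $a$ and, for large $n$, all of $a_n,b_n$ in $\mathbb E_\rho$, contradicting injectivity there; if $b\in\mathbb T_\rho$ then $\psi(a)\in L_\rho$ puts $a$ on $\mathbb T_\rho$, whence by A.1 either $a=b$, contradicting local injectivity of $\psi$, or $a$ is a corner, contrary to $a\notin\bigcup_m\Sigma_{\delta,m}$. In every case we reach a contradiction, proving (b).

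The step I expect to be the main obstacle is the single-valuedness in (a): the reflections in the various arcs patch to a well-defined function only because the slits are positioned so that $\ext(\Gamma_\sigma)$ contains no loop around a corner, so making this rigorous amounts to the (largely topological) verification that $\ext(\Gamma_\sigma)$ is simply connected, together with careful bookkeeping, near each $\omega_k$, of which reflection is used on which side of the slit. A subordinate subtlety in (b) is that A.1 permits $z_k=z_j$ for distinct corners, so one cannot simply claim that $\psi(A_{\delta,\sigma})$ avoids a neighborhood of $z_k$; this is why the argument is routed through A.1's description of the coincidences of $\psi$ on $\mathbb T_\rho$ rather than through any direct estimate.
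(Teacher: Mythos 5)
Your proposal is correct and follows essentially the same route as the paper: part (a) is obtained by patching the Schwarz reflections across the non-corner arcs of $\mathbb{T}_\rho$ together with the hypothesized continuations $\psi_\pm$ near the corners (the paper does this by direct patching rather than by invoking the monodromy theorem, with the slits playing the same role of blocking loops around the $\omega_k$; note only that your $\Psi_\ell$ is analytic up to, not on, full disks about the endpoints $\omega_k$, which is all the argument needs). Part (b) is proved by the identical compactness argument with $\sigma_n\nearrow\rho$, reducing everything to the injectivity of $\psi$ on $\overline{\mathbb{E}}_\rho$ away from the corners and its local injectivity at non-corner boundary points.
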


\begin{proof} We first need to introduce the
following notation. The set
$\mathbb{T}_\rho\setminus
\{\omega_1,\omega_2,\ldots,\omega_s\}$ consists of $s$
open circular arcs $\ell^1,\ell^2,\ldots,\ell^s$.
Let $k\in\{1,2,\ldots,s\}$,  and choose numbers
$\alpha_1<\alpha_2$ (which depend on $k$) such
that
\[
\ell^k=\left\{\rho e^{i\alpha}: \alpha_1< \alpha
< \alpha_2\right\}.
\]
Then, for any $v>0$ and $\sigma\in (0,\rho)$, we
define
\[
\ell^k_{v}:=\left\{\rho e^{i\alpha}:
\alpha_1+v\leq \alpha\leq \alpha_2-v\right\},
\]
\[
O^k_{v,\sigma}:=\left\{w:\sigma<|w|<\rho^2/\sigma,\
\alpha_1+v<\arg(w)<\alpha_2-v\right\}.
\]
Observe that $\ell^k_0=\overline{\ell^k}$, and that
$O^k_{v,\sigma}$ is an open set whose reflection
about $\mathbb{T}_\rho$ coincides with itself.

Let $\delta_1>0$ be so small that each of the
arcs
$\ell^1_{\delta_1},\ell^2_{\delta_1},\ldots,\ell^s_{\delta_1}$
has positive length. We first prove the following

\emph{Claim: for all $\sigma\in
(\rho-\delta_1,\rho)$ sufficiently close to
$\rho$, $\psi$ has an analytic continuation from
$\mathbb{E}_\rho$ to $\mathbb{E}_\rho\cup
\left[\cup_{k=1}^s O^k_{\delta_1,\sigma}\right] $
which is one-to-one on $\cup_{k=1}^s
O^k_{\delta_1,\sigma}$.}\vspace{.2cm}

In effect, for each $1\leq k\leq s$, the Schwarz
reflection $z\mapsto z^*$ about the simple
analytic arc $\psi\left(\ell^k_{\delta_1}\right)$
is a well-defined (one-to-one and antianalytic)
function on some small neighborhood $U^k$ of
$\psi\left(\ell^k_{\delta_1}\right)$,  so that
for all $\sigma$ so close to $\rho$ that
$\psi\left(\overline{\mathbb{E}}_\rho\cap
O^k_{\delta_1,\sigma}\right)\subset U^k$, the
analytic continuation of $\psi$ to
$O^k_{\delta_1,\sigma}$ is given by
\[
\psi(w)=\left[\psi(\rho^2/\overline{w})\right]^*,\quad
w\in O^k_{\delta_1,\sigma}\cap \mathbb{D}_\rho.
\]
By Assumption A.1, $\psi$ is one-to-one on
$\overline{\mathbb{E}}_\rho\setminus
\{\omega_1,\omega_2,\ldots,\omega_s\}$, and being
the closed arcs
$\ell^1_{\delta_1},\ell^2_{\delta_1},\ldots,\ell^s_{\delta_1}$
pairwise disjoint, it clearly follows that if
$\rho-\sigma$ is sufficiently small, then $\psi$
is univalent on $\cup_{k=1}^s
O^k_{\delta_1,\sigma}$. Thus, the claim is
proven.

Now, fix $\delta\in (0,\rho)$ such that
$\Sigma_{\delta,1},\Sigma_{\delta,2},
\dots,\Sigma_{\delta,s}$ are pairwise disjoint
and $\psi$ has analytic continuations $\psi_+$,
$\psi_-$ to $\Sigma^+_{\delta,k}$,
$\Sigma^-_{\delta,k}$, respectively, for each
$1\leq k\leq s$. By applying the claim above for
an arbitrarily small value of $\delta'\in
(0,\delta)$, it clearly follows that for
$\rho-\sigma$ sufficiently small, $\psi$ has an
analytic continuation from $\mathbb{E}_\rho$ to
$\ext(\Gamma_\sigma)$ with continuous boundary
values on $\Gamma_\sigma$ when viewing each
$[\sigma_k,\omega_k]$ as having two sides.
Furthermore, every point $w_0\in
\mathbb{T}_\rho\setminus\{\omega_1,\omega_2,\ldots,\omega_s\}$
has a neighborhood on which $\psi$ is one-to-one.
This proves Lemma \ref{lem1}(a).

Suppose now that the statement of Lemma
\ref{lem1}(b) is not true, that is, suppose there
is a sequence $\{\sigma_n\}_{n\geq 1}$, with
$\sigma_n \nearrow \rho$, such that either $\psi$
is not one-to-one on
\[
A_{\delta,\sigma_n}:=\overline{\mathbb{E}_{\sigma_n}}\setminus
\cup_{k= 1}^s\overline{\Sigma_{\delta,k}},
\]
or
\[
\psi\left(A_{\delta,\sigma_n}
\right)\cap\overline{\psi\left(\ext(\Gamma_{\sigma_n})\setminus
A_{\delta,\sigma_n}\right)} \not=\emptyset.
\]

Then we can find two sequences of points
$\{w_{n,0}\}_{n\geq1}$, $\{w_{n,1}\}_{n\geq1}$,
such that for each $n\geq 1$,
\[
w_{n,0}\not=w_{n,1},\quad w_{n,0}\in
A_{\delta,\sigma_n},\quad w_{n,1}\in
\overline{\mathbb{E}}_{\sigma_n}
\]
and either one of the following three equalities
holds true:
\begin{equation}\label{eq48}
\psi(w_{n,0})=\psi(w_{n,1}),\quad
\psi(w_{n,0})=\psi_+(w_{n,1}),\quad
\psi(w_{n,0})=\psi_-(w_{n,1}).
\end{equation}
By extracting subsequences if necessary, we can
assume that
\[
w_{n,0}\to w_0 \in
\overline{\mathbb{E}}_\rho\setminus \cup_{k=
1}^s\Sigma_{\delta,k},\quad w_{n,1}\to w_1\in
\overline{\mathbb{E}}_\rho,
\]
and, by (\ref{eq48}), $\psi(w_0)=\psi(w_1)$. But,
in view of Assumption A.1, this is only possible
if $w_1=w_0\in
\overline{\mathbb{E}}_\rho\setminus\{\omega_1,\omega_2,\ldots,\omega_s\}$,
contradicting the fact that $\psi$ is univalent
in some neighborhood of $w_0$.

\end{proof}

\begin{lem}\label{lem2} Let $\delta>0$ be such
that $\psi$ has analytic continuations $\psi_\pm$
to $\Sigma^\pm_{\delta,k}$. Then, for every
$\epsilon>0$ and  $\sigma\in (\rho-\delta,\rho)$
such that
\[
\psi_\pm\left([\sigma_k,\omega_k]\right)\subset
D_\epsilon(z_k) \subset G_1,
\]
we have
\begin{eqnarray*}
&&\hspace{-.5cm}\frac{1}{2\pi
i}\int_{\sigma_k}^{\omega_k}\left(\frac{\varphi'(z)t^n}{\varphi(\psi_+(t))-\varphi(z)}-
\frac{\varphi'(z)t^n}{\varphi(\psi_-(t))-\varphi(z)}\right)dt\\
&=&-\binom{n}{-\lambda_k-1}\rho^{n+\lambda_k+1}\left(L(z_k,z)A_ke^{i(n+\lambda_k+1)\Theta_k}
+r_{\sigma_k,n}(z)
 \right)
\end{eqnarray*}
where
\[
r_{\sigma_k,n}(z)=\left\{\begin{array}{ll}
                      \mathcal{O}\left(n^{-\lambda_k}\right),\ &\ if\  0<\lambda_k<1,\ \lambda_k\not=1/2,\\
                      \mathcal{O}\left(n^{-1}\log n\right),\ &\ if\ \lambda_k=1/2, \\
                      \mathcal{O}\left(n^{-1}\right),\ &\ if\ 1<\lambda_k<2, \\
                      \end{array}\right.
\]
uniformly on $G_1\setminus D_\epsilon(z_k)$ as
$n\to\infty$.
\end{lem}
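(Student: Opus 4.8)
The plan is to substitute the local expansions (\ref{eq10})--(\ref{eq11}) into the integrand, to exploit that along the segment $[\sigma_k,\omega_k]$ the two continuations $\psi_+$ and $\psi_-$ differ by exactly one loop of the monodromy of the Lehman variable $y=t-\omega_k$ about $\omega_k$, and then to evaluate the resulting elementary Euler (Beta-type) integrals and compare orders.

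\emph{Reduction to a difference of two branch expansions.} Parametrize the segment as $t=\tau e^{i\Theta_k}$, $\sigma\le\tau\le\rho$, so that $dt=e^{i\Theta_k}\,d\tau$, $t^{n}=\tau^{n}e^{in\Theta_k}$, and $y:=t-\omega_k=(\tau-\rho)e^{i\Theta_k}$ has modulus $\rho-\tau$ while its argument equals $\Theta_k-\pi$ on one edge of the segment and $\Theta_k+\pi$ on the other. Since $\sigma>\rho-\delta$ we have $|y|<\delta$ all along $[\sigma_k,\omega_k]$, so (\ref{eq10}) (for $0<\lambda_k<1$) or (\ref{eq11}) (for $1<\lambda_k<2$) applies to each of $\varphi'(z)/[\varphi(\psi_+(t))-\varphi(z)]$ and $\varphi'(z)/[\varphi(\psi_-(t))-\varphi(z)]$, uniformly for $z\in G_1\setminus D_\epsilon(z_k)$ as recorded at the end of Subsection \ref{lehmanexpansion}. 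Subtracting, the single-valued part of the expansion --- the leading constant $\varphi'(z)/[\varphi(z_k)-\varphi(z)]$ together with any integer power of $y$ that appears --- cancels identically, whereas a term $c\,y^{\,j\lambda_k+l}(\log y)^{m}$ with $j\lambda_k+l\notin\mathbb{Z}$ survives, multiplied by the nonzero factor $c\,(1-e^{2\pi i(j\lambda_k+l)})$ when $m=0$ and, when $m\ge1$, additionally producing a piece carrying $\log(\rho-\tau)$ from $\log y\mapsto\log y\pm2\pi i$. In particular the leading surviving term is a fixed nonzero constant multiple of $(\rho-\tau)^{\lambda_k}$, namely
\[
\frac{\varphi'(z)}{\varphi(\psi_+(t))-\varphi(z)}-\frac{\varphi'(z)}{\varphi(\psi_-(t))-\varphi(z)}=2i\sin(\pi\lambda_k)\,L(z_k,z)\,A_k\,e^{i\lambda_k\Theta_k}(\rho-\tau)^{\lambda_k}+\bigl(\text{higher order in }\rho-\tau\bigr),
\]
and, crucially, when $\lambda_k=1/2$ the next term $y^{2\lambda_k}=y$ is an integer power and therefore cancels in the difference, so that there the leading correction comes from the $y^{3\lambda_k}\log y$ term and carries a factor $\log(\rho-\tau)$.

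\emph{Evaluating the integrals.} After the substitution, each surviving term converts $\frac{1}{2\pi i}\int_{\sigma_k}^{\omega_k}(\cdots)\,dt$ into a constant times $e^{i(n+1+\mu)\Theta_k}\int_\sigma^\rho(\rho-\tau)^{a}\bigl(\log(\rho-\tau)\bigr)^{m}\tau^{n}\,d\tau$ for appropriate $a>-1$, $m\in\{0,1\}$ and exponent $\mu$. Splitting $\int_\sigma^\rho=\int_0^\rho-\int_0^\sigma$ and noting that $\int_0^\sigma(\rho-\tau)^{a}|\log(\rho-\tau)|^{m}\tau^{n}\,d\tau=\mathcal{O}\!\left((\sigma/\rho)^{n}\rho^{\,n+a+1}\right)$ is exponentially smaller than the main term, one is left with the Euler integral $\int_0^\rho(\rho-\tau)^{a}\tau^{n}\,d\tau=\rho^{\,n+a+1}\Gamma(n+1)\Gamma(a+1)/\Gamma(n+a+2)$ and, for the $m=1$ terms, its derivative in $a$, whose factor $\Gamma'(n+a+2)/\Gamma(n+a+2)=\log n+\mathcal{O}(1)$ supplies the extra $\log n$. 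For $a=\lambda_k\notin\mathbb{Z}$ the identity $\Gamma(\lambda_k+1)\Gamma(-\lambda_k)=-\pi/\sin(\pi\lambda_k)$ turns the above into $-(\pi/\sin(\pi\lambda_k))\rho^{\,n+\lambda_k+1}\binom{n}{-\lambda_k-1}$, which combines with the $2i\sin(\pi\lambda_k)$ from the previous step (and with the $e^{i\Theta_k}$, $e^{in\Theta_k}$ produced by $dt$ and $t^n$) to yield exactly $-\binom{n}{-\lambda_k-1}\rho^{\,n+\lambda_k+1}L(z_k,z)A_k e^{i(n+1+\lambda_k)\Theta_k}$, the asserted main term.

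\emph{The error estimate, and the main obstacle.} It remains to weigh each surviving higher-order term against the main term: since $\binom{n}{-a-1}\sim n^{-a-1}/\Gamma(-a)$ for $a\notin\mathbb{Z}$, a contribution of order $(\rho-\tau)^{a}(\log(\rho-\tau))^{m}$ is of relative order $n^{\lambda_k-a}(\log n)^{m}$ compared with the main term. Thus the dominant correction is: the $y^{2\lambda_k}$ term when $0<\lambda_k<1$, $\lambda_k\ne1/2$, giving $\mathcal{O}(n^{-\lambda_k})$; the $y^{3/2}\log y$ term when $\lambda_k=1/2$, giving $\mathcal{O}(n^{-1}\log n)$; and the $y^{1+\lambda_k}$ term when $1<\lambda_k<2$, giving $\mathcal{O}(n^{-1})$; all remaining terms, including the $\mathcal{O}$-remainders of (\ref{eq10}) and (\ref{eq11}), are of strictly smaller order, and the uniformity in $z\in G_1\setminus D_\epsilon(z_k)$ is inherited from that in (\ref{eq10})--(\ref{eq11}). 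The delicate point --- the real content beyond routine computation --- is the branch bookkeeping in the reduction step: determining which determinations of $y^{\,j\lambda_k+l}$ and of $\log(w-\omega_k)$ attach to $\psi_+$ and to $\psi_-$ on the two edges of $[\sigma_k,\omega_k]$, so as to be certain which terms cancel, to compute the surviving coefficients (and the overall sign) correctly, and in particular to verify that when $\lambda_k=1/2$ it is genuinely the logarithmic term, not an integer power, that governs the correction. Once this is in place the remainder is the Euler-integral-plus-Stirling computation described above.
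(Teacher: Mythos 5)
Your proposal is correct and follows essentially the same route as the paper: substitute the local expansions (\ref{eq10})--(\ref{eq11}) into the difference of the two branch values along $[\sigma_k,\omega_k]$, observe that the single-valued terms cancel while each surviving term $y^{a}(\log y)^m$ reduces (after the tail $\int_0^\sigma$ is discarded as exponentially small) to a Beta-type integral that the reflection formula converts into $\binom{n}{-a-1}\rho^{n+a+1}$ times the appropriate phase, and then rank the corrections by the relative order $n^{\lambda_k-a}(\log n)^m$. The branch bookkeeping you flag as the delicate point is handled in the paper simply by fixing the determinations $(t-\omega_k)^\beta_\pm$ with arguments $\Theta_k\mp\pi$ on the segment, which produces exactly the factor $e^{\mp i\beta\pi}$ and hence the $-2i\sin(\pi\beta)$ you compute, so your leading term and sign agree with (\ref{eq31}) and the stated lemma.
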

\begin{proof} For the analytic functions $(w-\omega_k)^\beta$,
$\log(w-\omega_k)$, defined on
$\Sigma_{\delta,k}\cap\mathbb{E}_\rho$ and
corresponding to the branch of the argument
\[\Theta_k-\pi<\arg(w-\omega_k)<\Theta_k+\pi,\quad
w\in\mathbb{C}\setminus\{t\omega_k:t\leq 1\},
\]
let us denote by $(w-\omega_k)^\beta_\pm$ and
$\log_\pm(w-\omega_k)$ their analytic continuations
from $\Sigma_{\delta,k}\cap\mathbb{E}_\rho$ onto
$\Sigma^\pm_{\delta,k}$, respectively. For an
integer $n\geq 0$ and real $\beta>-1$, we have
(see, e.g., \cite[Sec. 2]{minafaber})
\[
\int_0^1x^n(1-x)^\beta\log (1-x)
dx=-\frac{\Gamma(\beta+1)n!\log
n\left[1+\mathcal{O}\left(1/\log
n\right)\right]}{\Gamma(n+\beta+2)}\quad
(n\to\infty),
\]
\[
\int_0^1x^n(1-x)^\beta
dx=\frac{\Gamma(\beta+1)n!}{\Gamma(n+\beta+2)}=
\frac{\Gamma(\beta+1)(1+o(1))}{n^{\beta+1}}\quad
(n\to\infty),
\]
so that
\begin{eqnarray*}
&&\hspace{-.5cm}\int_{\sigma_k}^{\omega_k}t^n(t-\omega_k)_\pm^\beta\log_\pm(t-\omega_k)dt\nonumber\\
&=&e^{\mp i\beta\pi}
\omega_k^{n+1+\beta}\int_{0}^{1}x^n(1-x)^\beta
\left[\log(1-x)+\log\rho +i(\Theta_k\mp\pi)\right]dx+\mathcal{O}(\sigma^n) \nonumber \\
&=& -\frac{e^{\mp i\beta\pi}
\omega_k^{n+1+\beta}\Gamma(\beta+1)n!(\log
n)}{\Gamma(n+\beta+2)}
+\mathcal{O}\left(\frac{\rho^n}{n^{\beta+1}}\right)+\mathcal{O}(\sigma^n).
\end{eqnarray*}
From this last equality  and the well-known
identity
\[
\Gamma(1-z)\Gamma(z)=-\Gamma(-z)\Gamma(z+1)=\pi/\sin(\pi
z),
\]
we then obtain
\begin{eqnarray}\label{eq30}
&&\int_{\sigma_k}^{\omega_k}t^n\left[(t-\omega_k)_+^\beta\log_+(t-\omega_k)-
(t-\omega_k)_-^\beta\log_-(t-\omega_k)\right]dt\nonumber\\
&=& -\frac{2\pi i n!(\log n)(\omega_k)^{n+1+\beta}}
{\Gamma(-\beta)\Gamma(n+\beta+2)}
+\mathcal{O}\left(\frac{\rho^n}{n^{\beta+1}}\right)+\mathcal{O}(\sigma^n)\nonumber\\
&=& -2\pi i \binom{n}{-\beta-1}(\log
n)(\omega_k)^{n+1+\beta}
+\mathcal{O}\left(\frac{\rho^n}{n^{\beta+1}}\right).
\end{eqnarray}

Similarly,
\begin{equation}\label{eq31}
\int_{\sigma_k}^{\omega_k}
t^n\left[(t-\omega_k)_+^\beta-(t-\omega_k)_-^\beta\right]
dt=2\pi i \binom{n}{-\beta-1}(\omega_k)^{n+1+\beta}+
\mathcal{O}\left(\sigma^n\right),
\end{equation}
and
\begin{equation}\label{eq32}
\int_{\sigma_k}^{\omega_k}\mathcal{O}\left((t-\omega_k)_\pm^{\beta}
t^n\right)dt=\mathcal{O}\left(\frac{\rho^n}{n^{1+\beta}}\right).
\end{equation}
Therefore, we get from (\ref{eq10}),
(\ref{eq30})-(\ref{eq32}) that if
$0<\lambda_k<1$, $\lambda_k\not=1/2$, then
\begin{eqnarray}\label{eq1}
&&\hspace{-.5cm}\frac{1}{2\pi
i}\int_{\sigma_k}^{\omega_k}\left(\frac{\varphi'(z)t^n}{\varphi(\psi_+(t))-\varphi(z)}-
\frac{\varphi'(z)t^n}{\varphi(\psi_-(t))-\varphi(z)}\right)dt\nonumber\\
&=&-\binom{n}{-\lambda_k-1}L(z_k,z)A_k(\omega_k)^{n+\lambda_k+1}
-\binom{n}{-2\lambda_k-1}\left[
L(z_k,z)c^{k}_{020}+\frac{\partial
L}{\partial\zeta} (z_k,z)\frac{(A_k)^2}{2}\right]
(\omega_k)^{n+2\lambda_k+1}
\nonumber\\
& &+\left\{\begin{array}{ll}
            \mathcal{O}\left(\rho^n/n^{2+\lambda_k}\right), \ &\ 1/2<\lambda_k<1, \\
            \mathcal{O}\left(\rho^n/n^{3\lambda_k+1}\right), \ &\ 0<\lambda_k<1/2,
          \end{array}\right.
         \nonumber \\
&=&-\binom{n}{-\lambda_k-1}\left(L(z_k,z)A_k(\omega_k)^{n+\lambda_k+1}
+\frac{\left[ L(z_k,z)c^{k}_{020}+\frac{\partial
L}{\partial\zeta}
(z_k,z)\frac{(A_k)^2}{2}\right]\Gamma(-\lambda_k)
(\omega_k)^{n+2\lambda_k+1}}{\Gamma(-2\lambda_k)n^{\lambda_k}}\right.
\nonumber\\
&&\hspace{2.4cm}\left.+o\left(\frac{\rho^n}{n^{\lambda_k}}\right)\right),
\end{eqnarray}
while if $\lambda_k=1/2$, then
\begin{eqnarray}\label{eq14}
&&\hspace{-.5cm}\frac{1}{2\pi
i}\int_{\sigma_k}^{\omega_k}\left(\frac{\varphi'(z)t^n}{\varphi(\psi_+(t))-\varphi(z)}-
\frac{\varphi'(z)t^n}{\varphi(\psi_-(t))-\varphi(z)}\right)dt\nonumber\\
&=&-\binom{n}{-\lambda_k-1}\left(L(z_k,z)A_k(\omega_k)^{n+\lambda_k+1}-\frac{L(z_k,z)c^k_{111}\Gamma(-\lambda_k)(\omega_k)^{n+3\lambda_k+1}(\log
n)}{\Gamma(-3\lambda_k)n^{2\lambda_k}}+o\left(\frac{\rho^n\log
n}{n^{2\lambda_k}}\right)\right).
\end{eqnarray}
Similarly, if $1<\lambda_k<2$, then we obtain
from (\ref{eq11}) and (\ref{eq31})-(\ref{eq32})
that
\begin{eqnarray}\label{eq15}
&&\hspace{-.5cm}\frac{1}{2\pi
i}\int_{\sigma_k}^{\omega_k}\left(\frac{\varphi'(z)t^n}{\varphi(\psi_+(t))-\varphi(z)}-
\frac{\varphi'(z)t^n}{\varphi(\psi_-(t))-\varphi(z)}\right)dt\nonumber\\
&=&-\binom{n}{-\lambda_k-1}\left(L(z_k,z)A_k\omega_k^{n+\lambda_k+1}+\frac{L(z_k,z)c^k_{110}
\Gamma(-\lambda_k)\omega_k^{n+\lambda_k+2}}{\Gamma(-\lambda_k-1)n}+o\left(\frac{\rho^n}{n}\right)\right).
\end{eqnarray}
\end{proof}
Lemma \ref{lem2} is nothing but an abbreviation
of relations (\ref{eq1}), (\ref{eq14}) and
(\ref{eq15}).

\subsection{Proof of Theorems \ref{thm1} and \ref{thm2}}

\paragraph{Proof of Theorems \ref{thm1}.}
Let $\delta\in (0,\rho)$ be such that
$\Sigma_{\delta,1},\Sigma_{\delta,2},
\dots,\Sigma_{\delta,s}$ are pairwise disjoint
and $\psi$ has analytic continuations $\psi_\pm$
to $\Sigma^\pm_{\delta,k}$ for each
$k\in\{1,2,\ldots,s\}$. Let  $F\subset G_\rho$ be
a compact set, so that by (\ref{eq28}) we have
that for any (fixed) $r\in(\rho,1)$,
\begin{equation}\label{eq85}
P_{n}(z)=\frac{\sqrt{n+1}}{2\pi
 i}\oint_{\mathbb{T}_1}\frac{\varphi'(z)t^{n}dt}{\varphi(\psi(t))-\varphi(z)}
+\mathcal{O}\left(\sqrt{n}(r\rho)^n\right),\quad
z\in  E.
\end{equation}

Now, it is clear that we can find $\epsilon>0$
and $\sigma\in (\rho-\delta,\rho)$ such that for
every $k\in \{1,2,\ldots,s\}$,
\begin{equation}\label{eq84}
\psi_\pm\left([\sigma_k,\omega_k]\right)\subset
D_\epsilon(z_k)\subset G_1\setminus F.
\end{equation}
In addition, we can assume that $\sigma$ was
chosen so close to $\rho$ that it satisfies the
thesis of Lemma \ref{lem1}(a) and that
\begin{equation}\label{eq87}
\overline{\psi\left(\ext(\Gamma_\sigma)\cap
\mathbb{D}_1\right)}\subset
\overline{G}_1\setminus F.
\end{equation}

This last inclusion implies that for each $z\in
F$, the function
$\left[\varphi(\psi(w))-\varphi(z)\right]^{-1}$
is analytic on
$\mathbb{D}_1\cap\ext(\Gamma_\sigma)$, with
continuous boundary values on $\mathbb{T}_1\cup
\Gamma_\sigma$ when viewing each
$[\sigma_k,\omega_k]$ as having two sides. Hence,
by deforming the path of integration in
(\ref{eq85}) from $\mathbb{T}_1$ to
$\Gamma_\sigma$, and taking into account
(\ref{eq84}), Lemma \ref{lem2} and (\ref{eq87}),
we obtain that for all $z\in F$,
\begin{eqnarray*}\label{eq86}
P_{n}(z)&=&\frac{\sqrt{n+1}}{2\pi
 i}\oint_{\mathbb{T}_\sigma}\frac{\varphi'(z)t^{n}dt}{\varphi(\psi(t))-\varphi(z)}
+\mathcal{O}\left(\sqrt{n}(r\rho)^n\right)\\
&&+\sqrt{n+1}\sum_{k=1}^s\frac{1}{2\pi
i}\int_{\sigma_k}^{\omega_k}\left(\frac{\varphi'(z)t^n}{\varphi(\psi_+(t))-\varphi(z)}-
\frac{\varphi'(z)t^n}{\varphi(\psi_-(t))-\varphi(z)}\right)dt\nonumber\\
&=&\mathcal{O}\left(\sqrt{n}(r\rho)^n\right)+\mathcal{O}\left(\sqrt{n}\sigma^n\right)-\sqrt{n+1}\sum_{k=1}^s
\binom{n}{-\lambda_k-1}\rho^{n+\lambda_k+1}\left(L(z_k,z)A_ke^{i(n+\lambda_k+1)\Theta_k}
+r_{\sigma_k,n}(z)\right)\\
&=&-
\sqrt{n+1}\binom{n}{-\lambda_1-1}\rho^{n+\lambda_1+1}\left(\sum_{k=1}^u
L(z_k,z)A_ke^{i(n+\lambda_k+1)\Theta_k}+R_n(z)\right)
\end{eqnarray*}
where
\begin{equation}\label{eq89}
R_n(z)=
 \left\{\begin{array}{ll}
                      \mathcal{O}\left(n^{-\lambda_1}\right),\ &\mathrm{if}\  0<\lambda_1<1,\ \lambda_1\not=1/2,\\
                      \mathcal{O}\left(n^{-1}\log n\right),\ &\mathrm{if}\ \lambda_1=1/2, \\
                      \mathcal{O}\left(n^{-1}\right),\ &\mathrm{if}\ 1<\lambda_1<2, \\
                      \end{array}\right.+
\left\{\begin{array}{ll}
\mathcal{O}\left(n^{\lambda_1-\lambda_{u+1}}\right),\
&\mathrm{if}\ u<s, \\
0,\ &\mathrm{if}\ u=s,
\end{array}\right.
\end{equation}
uniformly on $F$ as $n\to\infty$. This proves
Theorem \ref{thm1}.

\paragraph{Proof of Theorems \ref{thm2}.} Let  $\delta\in
(0,\rho)$ be such that
$\Sigma_{\delta,1},\Sigma_{\delta,2},
\dots,\Sigma_{\delta,s}$ are pairwise disjoint
and  $\psi$ has analytic continuations $\psi_\pm$
to $\Sigma^\pm_{\delta,k}$ for each
$k\in\{1,2,\ldots,s\}$. Since, by Assumption A.1,
$\psi\left(\overline{\mathbb{E}}_\rho\setminus\cup_{k=1}^s\Sigma_{\delta,k}\right)$
is a closed set that does not contain any corner
$z_k$, we can find $\epsilon>0$ and $\sigma\in
(\rho-\delta,\rho)$ such that for every $k\in
\{1,2,\ldots,s\}$,
\begin{equation}\label{eq44}
\psi_\pm\left([\sigma_k,\omega_k]\right)\subset
D_\epsilon(z_k)\subset G_1,\quad
D_\epsilon(z_k)\cap
\psi\left(\mathbb{E}_\sigma\setminus\cup_{k=1}^s\overline{\Sigma_{\delta,k}}\right)=\emptyset.
\end{equation}

In addition, of course, we can assume that
$\sigma$ satisfies the thesis of Lemma
\ref{lem1}, so that for each $z\in
V_{\delta,\sigma}:=\psi\left(\mathbb{E}_\sigma\setminus
\cup_{k= 1}^s
\overline{\Sigma_{\delta,k}}\right)$, the
function
$\left[\varphi(\psi(w))-\varphi(z)\right]^{-1}$
is analytic on
$\left[\mathbb{D}_1\cap\ext(\Gamma_\sigma)\right]\setminus\{\phi(z)\}$,
with continuous boundary values on
$\mathbb{T}_1\cup \Gamma_\sigma$ and a simple
pole at $\phi(z)$. Then, we get from (\ref{eq28})
and the residue theorem that
\begin{eqnarray}\label{eq47}
P_{n}(z)&=&\frac{\sqrt{n+1}}{2\pi
 i}\oint_{\mathbb{T}_1}\frac{\varphi'(z)t^{n}dt}{\varphi(\psi(t))-\varphi(z)}
+\epsilon_n(z)\nonumber\\
&=&\sqrt{n+1}\phi'(z)[\phi(z)]^n+\frac{\sqrt{n+1}}{2\pi
i}\oint_{\mathbb{T}_\sigma}\frac{\varphi'(z)t^n
dt}{\varphi(\psi(t))-\varphi(z)}+\epsilon_n(z)\nonumber\\
&&+\sqrt{n+1}\sum_{k=1}^s\frac{1}{2\pi
i}\int_{\sigma_k}^{\omega_k}\left(\frac{\varphi'(z)t^n}{\varphi(\psi_+(t))-\varphi(z)}-
\frac{\varphi'(z)t^n}{\varphi(\psi_-(t))-\varphi(z)}\right)dt,\quad
z\in V_{\delta,\sigma}.
\end{eqnarray}

Let then $E\subset V_{\delta,\sigma}$ be a
compact set and let $r\in (\rho, 1)$ be such that
$E\subset \overline{G}_{r}$, so that
$\epsilon_n(z)=\mathcal{O}\left(\sqrt{n}(r\rho)^n\right)$
uniformly on $E$ as $n\to\infty$. On the other
hand, again by Lemma \ref{lem1}, the set
\[
\psi(\Gamma_\sigma):=\overline{\psi\left(\ext(\Gamma_\sigma)\right)}\setminus
\psi\left(\ext(\Gamma_\sigma)\right)
\]
is compact and disjoint from $E$, so that
\begin{equation}\label{eq88}
\min\left\{|\varphi(\zeta)-\varphi(z)|:\zeta\in
\psi(\Gamma_\sigma),\ z\in E\right\}>0.
\end{equation}

Therefore, we get from (\ref{eq47}) by
considering (\ref{eq44}), Lemma \ref{lem2} and
 (\ref{eq88}) that for all $z\in E$,
\begin{eqnarray*}
P_n(z) &=&
\sqrt{n+1}\phi'(z)[\phi(z)]^n+\mathcal{O}\left(\sqrt{n}\sigma^n\right)+\mathcal{O}\left(\sqrt{n}(r\rho)^n\right)
\\
&&-\sqrt{n+1}\sum_{k=1}^s
\binom{n}{-\lambda_k-1}\rho^{n+\lambda_k+1}\left(L(z_k,z)A_ke^{i(n+\lambda_k+1)\Theta_k}
+r_{\sigma_k,n}(z)\right)\\
&=&\sqrt{n+1}\phi'(z)[\phi(z)]^n-
\sqrt{n+1}\binom{n}{-\lambda_1-1}\rho^{n+\lambda_1+1}\left(\sum_{k=1}^u
L(z_k,z)A_ke^{i(n+\lambda_k+1)\Theta_k}+R_n(z)\right),
\end{eqnarray*}
where $R_n(z)$ satisfies (\ref{eq89}) uniformly
on $E$ as $n\to\infty$.

Thus, it only remains to prove Theorem
\ref{thm2}(b). Let $\delta'>0$ and $\epsilon>0$
be such that
$\Sigma_{\delta'\!,1},\Sigma_{\delta'\!,2},
\dots,\Sigma_{\delta'\!,s}$ are pairwise
disjoint,  $\psi$ has analytic continuations
$\psi_\pm$ to $\Sigma^\pm_{\delta'\!,k}$ for each
$k\in\{1,2,\ldots,s\}$,
\begin{equation}\label{eq9}
\overline{\psi_\pm\left(\Sigma^\pm_{\delta'\!,k}\right)}\subset
D_\epsilon(z_k) \subset G_1,\quad 1\leq k\leq s,
\end{equation}
and
\begin{equation}\label{eq50}
z_k\not=z_j\Rightarrow D_\epsilon(z_k)\cap
D_\epsilon(z_j)=\emptyset.
\end{equation}

Since for every $1\leq k\leq s$,
\[
\lim_{w\to\omega_k}\frac{\psi_{\pm}(w)-z_k}{(w-\omega_k)^{\lambda_k}}=A_k>0,
\]
we can assume $\delta'$ was chosen so small that
\begin{equation}\label{eq49} z_k\not\in\psi_\pm
\left(\Sigma^\pm_{\delta'\!,k}\right),\quad 1\leq
k\leq s.
\end{equation}

Then, fix $\delta\in (0,\delta')$, and for this
$\delta$, choose $\sigma\in (0,\rho)$ satisfying
Lemma \ref{lem1}. Let $z_j$ be a (fixed) corner
of $L_\rho$, and for each $k$ such that
$\psi(\omega_k)=z_j$, let $\ell_k\subset
\Sigma_{\delta,k}\cap\overline{\mathbb{E}}_\rho$
be a positively oriented closed simple path
encircling the segment $(\sigma_k,\omega_k]$,
whose only common point with $\Gamma_\sigma$ is
$\sigma_k$.

By Lemma \ref{lem1}, (\ref{eq9}), (\ref{eq50})
and (\ref{eq49}), the function
$\left[\varphi(\psi(w))-\varphi(z_j)\right]^{-1}$
is analytic on
$\mathbb{D}_1\cap\ext(\Gamma_\sigma)$, with
continuous boundary values on $\mathbb{T}_1\cup
\Gamma_\sigma\setminus\{\omega_k:\psi(\omega_k)=z_j\}$.
Hence, we obtain from (\ref{eq28}) that (with
$\rho<r<1$)
\begin{eqnarray}\label{eq34}
P_n(z_j)&=&\sum_{k:z_k\not=z_j}\frac{\sqrt{n+1}}{2\pi
i}\int_{\sigma_k}^{\omega_k}\left(\frac{\varphi'(z_j)t^n}{\varphi(\psi_+(t))-\varphi(z_j)}-
\frac{\varphi'(z_j)t^n}{\varphi(\psi_-(t))-\varphi(z_j)}\right)dt\nonumber\\
&&+\sum_{k:z_k=z_j}\frac{\sqrt{n+1}}{2\pi
i}\oint_{\ell_{k}}\frac{\varphi'(z_j)t^{n}dt}{\varphi(\psi(t))-\varphi(z_j)}
+\mathcal{O}(\sqrt{n}\sigma^n)+\mathcal{O}\left(\sqrt{n}(r\rho)^n\right)\,.
\end{eqnarray}

Now, after integrating by parts a couple of times
over $\ell_k$  and using (\ref{eq31}) we get
\begin{eqnarray}\label{eq33}
\oint_{\ell_k}t^n(t-\omega_k)^{-\lambda_k}dt
&=&\frac{(\sigma_k)^n\left[(\sigma_k-\omega_k)_-^{1-\lambda_k}-(\sigma_k-\omega_k)_+^{1-\lambda_k}\right]}{(1-\lambda_k)}\nonumber\\
&&-\frac{n(\sigma_k)^{n-1}\left[(\sigma_k-\omega_k)_-^{2-\lambda_k}-(\sigma_k-\omega_k)_+^{2-\lambda_k}\right]}
{(1-\lambda_k)(2-\lambda_k)}\nonumber\\
&&
+\frac{n(n-1)}{(1-\lambda_k)(2-\lambda_k)}\oint_{\ell_k}t^{n-2}(t-\omega_k)^{2-\lambda_k}dt
\nonumber\\
&=&\frac{n(n-1)}{(1-\lambda_k)(2-\lambda_k)}\int_{\sigma_k}^{\omega_k}t^{n-2}\left[(t-\omega_k)_+^{2-\lambda_k}-(t-\omega_k)_-^{2-\lambda_k}
\right]dt +\mathcal{O}(n\sigma^n)\nonumber\\
&=&\frac{2\pi i
n!(\omega_k)^{n-\lambda_k+1}}{\Gamma(\lambda_k)\Gamma(n+2-\lambda_k)}+\mathcal{O}(n^2\sigma^n).
\end{eqnarray}

Then, combining (\ref{eq34}), Lemma \ref{lem2},
relations (\ref{eq24})-(\ref{eq27}) and
(\ref{eq33}), we conclude that
\begin{eqnarray}
\frac{P_n(z_j)}{\sqrt{n+1}}&=&-\sum_{k:z_k\not=z_j}\binom{n}{-\lambda_k-1}\rho^{n+\lambda_k+1}\left(L(z_k,z_j)A_ke^{i(n+\lambda_k+1)\Theta_k}
+o(1)\right)\nonumber\\
&&+\sum_{k:z_k=z_j}\left[\binom{n}{\lambda_k-1}(A_k)^{-1}(\omega_k)^{n-\lambda_k+1}
+\left\{\begin{array}{ll}
\mathcal{O}\left(\rho^n/n^{1+\lambda_k}\right),\ & 0<\lambda_k\leq 1/2, \\
\mathcal{O}\left(\rho^n/n^{2-\lambda_k}\right),\
&  1/2<\lambda_k<2,\ \lambda_k\not=1,
\end{array}\right.
\right]\nonumber\\
&&+\mathcal{O}(\sigma^n)+\mathcal{O}\left((r\rho)^n\right)\nonumber,
\end{eqnarray}
so that if $\lambda_j^*=\max\{\lambda_k:
z_k=z_j,\ 1\leq k\leq s\}$, then
\begin{eqnarray}\label{eq59}
\frac{P_n(z_j)}{\sqrt{n+1}\binom{n}{\lambda_j^*-1}}-\sum_{\underset{\lambda_k=\lambda_j^*}{k:z_k=z_j}
}(A_k)^{-1}(\omega_k)^{n-\lambda_j^*+1}&=&\left\{\begin{array}{ll}
\mathcal{O}\left(\rho^n/n^{2\lambda_j^*}\right),\ & 0<\lambda_j^*\leq 1/2, \\
\mathcal{O}\left(\rho^n/n\right),\ &
1/2<\lambda_j^*<2,\ \lambda_j^*\not=1,
\end{array}\right.\nonumber\\
&&
+\sum_{\underset{\lambda_k<\lambda_j^*}{k:z_k=z_j}
}\mathcal{O}\left(\frac{\rho^n}{n^{\lambda_j^*-\lambda_k}}\right)+
\sum_{k:z_k\not=z_j}\mathcal{O}\left(\frac{\rho^n}{n^{\lambda_k+\lambda_j^*}}\right),
\end{eqnarray}
that is Theorem \ref{thm2}(b) holds true.

\subsection{Proof of Theorem \ref{thm4}}

\emph{Proof of part (a)}: Fix $m\geq 0$ an
integer, and let us show that $z^{s-1}(z^s-1)^m$
is orthogonal over $G_1$ to all powers $z^n$ with
$0\leq n\leq sm+s-2$ an integer. For this, first
notice that since both $G_1$ and the area measure
are invariant under a rotation of angle $e^{2\pi
i/s}$, we have that for any two integers
$\alpha$, $\beta$ such that $\alpha-\beta\not=
0\mod s$,
\[
\int_{G_1}z^\alpha\overline{z^\beta}dA(z)=0\,,
\]
and therefore,  $z^{s-1}(z^s-1)^m=\sum_{j=0}^m \binom{m}{j}(-1)^{m-j}z^{sj+s-1}$ is orthogonal over $G_1$ to all powers $z^{sm'+l}$ with $0\leq m'\leq m$, $0\leq l\leq s-2$ integers.

For the remaining powers, we obtain by applying
Green's formula (\cite[p. 10]{Gaier1}) and making
the change of variables $z=(R^sw^s+1)^{1/s}$ that
for $0\leq m'\leq m$,
\begin{eqnarray*}
\frac{1}{\pi}\int_{G_1}z^{s-1}(z^s-1)^m\overline{z^{sm'+s-1}}dA(z)&=&\frac{1}{s(m'+1)2\pi i}\oint_{L_1}z^{s-1}(z^s-1)^m\overline{z^{s(m'+1)}}dz\\
&=&\frac{R^{s(m+1)}}{s(m'+1)2\pi i}\oint_{\mathbb{T}_1}w^{sm+s-1}\overline{(R^sw^s+1)^{m'+1}}dw\\
&=&\left\{\begin{array}{ll}
            0,\ &\ 0\leq m'\leq m-1, \\
            R^{2s(m+1)}/(sm+s),\ &\ m'=m\,.
          \end{array}
\right.
\end{eqnarray*}

\emph{Proof of part (b):} We have that
$\rho=R^{-1}$, so that as shown in the proof of
Theorem \ref{thm1}, if $F\subset G_\rho$ is
compact, then for $r\in (R^{-1},1)$ and certain
$\sigma\in (0,R^{-1})$ sufficiently close to
$R^{-1}$, we have that uniformly in $z\in F$ as
$n\to\infty$,
\begin{equation}\label{eq51}
P_n(z)=\sum_{k=1}^s\frac{\sqrt{n+1}}{2\pi
i}\int_{\sigma_k}^{\omega_k}\left(\frac{\varphi'(z)t^n}{\varphi(\psi_+(t))-\varphi(z)}-
\frac{\varphi'(z)t^n}{\varphi(\psi_-(t))-\varphi(z)}\right)dt
+\mathcal{O}\left(\sqrt{n}(r/R)^n\right)+\mathcal{O}\left(\sqrt{n}\sigma^n\right).
\end{equation}

Now, suppose $n=sm+l$, with $0\leq l\leq s-2$,
$m\geq 0$ integers. From (\ref{eq52}) for
$N=3s-l-2$, we get that uniformly in $w\in
\cup_{k=1}^s[\sigma_k,\omega_k]$, $z\in F$,
\[
\frac{\varphi'(z)}{\varphi(\psi_\pm(w))-\varphi(z)}=\frac{\varphi'(z)}{\varphi(0)-\varphi(z)}
-\sum_{j=0}^{3s-l-3}\frac{\partial^{j}
L}{\partial\zeta^j}
(0,z)\frac{[\psi_\pm(w)]^{j+1}}{(j+1)!}+\mathcal{O}\left([\psi_\pm(w)]^{3s-l-1}\right),
\]
and since $e^{2\pi i
k/s}\psi(w)=\psi\left(e^{2\pi i k/s}w\right)$ for
all $1\leq k\leq s$, we have in virtue of
(\ref{eq51}), (\ref{eq21}), (\ref{eq22}) and
(\ref{eq32}) that
\begin{eqnarray}\label{eq83}
P_n(z)&=&-\sum_{j=0}^{3s-l-3}\frac{\partial^{j}
L}{\partial\zeta^j} (0,z)\frac{\left(\sum_{k=1}^s
e^{2(k-1)(n+j+2)i\pi/s}\right)\sqrt{n+1}}{(j+1)!\,2\pi
i}\int_{\sigma_1}^{\omega_1}t^n\left([\psi_+(t)]^{j+1}-[\psi_-(t)]^{j+1}\right)dt
\nonumber\\
&&+\mathcal{O}\left(\sqrt{n}\,R^{-n} n^{-(4s-l-1)/s}\right)\nonumber\\
&=&-\frac{\partial^{s-l-2}
L}{\partial\zeta^{s-l-2}}
(0,z)\frac{s\sqrt{n+1}}{(s-l-1)!\,2\pi
i}\int_{\sigma_1}^{\omega_1}t^n\left([\psi_+(t)]^{s-l-1}-[\psi_-(t)]^{s-l-1}\right)dt\nonumber\\
&&-\frac{\partial^{2s-l-2}
L}{\partial\zeta^{2s-l-2}}
(0,z)\frac{s\sqrt{n+1}}{(2s-l-1)!\,2\pi
i}\int_{\sigma_1}^{\omega_1}t^n\left([\psi_+(t)]^{2s-l-1}-[\psi_-(t)]^{2s-l-1}\right)dt\nonumber\\
&&+\mathcal{O}\left(\sqrt{n}\,R^{-n}
n^{-(4s-l-1)/s}\right).
\end{eqnarray}

Now,
\begin{equation*}
\psi(w)=(R^sw^s+1)^{1/s}=(Rs)^{1/s}e^{i\pi(s-1)/s^2}(w-\omega_1)^{1/s}
\left[1+\frac{(s-1)Re^{-i\pi/s}(w-\omega_1)}{2s}+\mathcal{O}\left((w-\omega_1)^2\right)\right],
\end{equation*}
and therefore,
\begin{eqnarray}\label{eq53}
[\psi(w)]^{s-l-1}&=&(Rs)^{(s-l-1)/s}e^{i\pi(s-1)(s-l-1)/s^2}(w-\omega_1)^{(s-l-1)/s}\nonumber\\
&&\times\left[1
+\frac{(s-1)(s-l-1)Re^{-i\pi/s}(w-\omega_1)}{2s}
+\mathcal{O}\left((w-\omega_1)^{2}\right) \right]
\end{eqnarray}
and
\begin{equation}\label{eq54}
[\psi(w)]^{2s-l-1}=(Rs)^{(2s-l-1)/s}e^{i\pi(s-1)(2s-l-1)/s^2}(w-\omega_1)^{(2s-l-1)/s}\left[1
+\mathcal{O}\left(w-\omega_1\right)\right]\,.
\end{equation}
 Then, from (\ref{eq53}) and (\ref{eq54}), we
 obtain
\begin{eqnarray*}
\frac{1}{2\pi
i}\int_{\sigma_1}^{\omega_1}t^n\left([\psi_+(t)]^{s-l-1}-[\psi_-(t)]^{s-l-1}\right)dt&=&
\frac{(-1)^{m}R^{-(n+1)}s^{(s-l-1)/s}n!}{\Gamma\left(n+(3s-l-1)/s\right)\Gamma\left((1+l-s)/s\right)}\\
&&+\frac{(-1)^{m}R^{-(n+1)}s^{-(l+1)/s}(s-1)(s-l-1)n!}{2\Gamma\left(n+(4s-l-1)/s\right)
\Gamma\left((1+l-2s)/s\right)}\\
&&+\mathcal{O}\left(R^{-n}
n^{-(4s-l-1)/s}\right),
\end{eqnarray*}
\begin{eqnarray*}
\frac{1}{2\pi
i}\int_{\sigma_1}^{\omega_1}t^n\left([\psi_+(t)]^{2s-l-1}-[\psi_-(t)]^{2s-l-1}\right)dt&=&
\frac{(-1)^{m+1}R^{-(n+1)}s^{(2s-l-1)/s}n!}{\Gamma\left(n+(4s-l-1)/s\right)\Gamma\left((1+l-2s)/s\right)}\\
&&+\mathcal{O}\left(R^{-n}
n^{-(4s-l-1)/s}\right).
\end{eqnarray*}
 Theorem \ref{thm4}(b) follows immediately after inserting these two previous equalities into
(\ref{eq83}).
\subsection{Proofs of the zero results}

\paragraph{Proof of Theorem \ref{thm6}.} Suppose that for some subsequence
$\{n_\nu\}_{\nu\geq 1}\subset \mathbb{N}$,
\[
H_{n_\nu}(z)=\varphi'(z)\sum_{k=1}^u
\frac{\varphi'(z_k)\hat{A}_k e^{2\pi
in_\nu\theta_k}}{[\varphi(z)-\varphi(z_k)]^2}\to
f(z)\quad \mathrm{as}\quad \nu\to\infty.
\]
By extracting a subsequence if necessary, we may
assume that for some fixed $\ell\in
\{1,\ldots,\mathbf{q}\}$,
$n_\nu=\mathrm{\mathbf{q}}m_\nu+\ell$ with
$m_\nu\in \mathbb{N}$, and by the compacity of
$\mathbb{T}_1$, that for some real numbers
$\alpha_2,\ldots,\alpha_{u^*}$
\begin{equation*}
\lim_{\nu\to\infty}e^{2\pi i r_{kj}n_{\nu}
\theta_j}=e^{2\pi i r_{kj}\alpha_j},\quad 1\leq
k\leq u,\ 2\leq j\leq u^*,
\end{equation*}
so that by (\ref{eq79}), $f$ must have the form
(\ref{eq72}).

Conversely, we now show that given an integer
$\ell\in \{1,\ldots,\mathbf{q}\}$ and arbitrary
real numbers $\alpha_2,\ldots,\alpha_{u^*}$, it
is possible to choose a subsequence
$\{n_\nu\}_{\nu\geq 1}$ such that
\begin{equation}\label{eq80}
\lim_{\nu\to\infty}e^{2\pi i n_\nu
\theta_k}=\lim_{\nu\to\infty}e^{2\pi
i\,\left(\frac{n_\nu
p_k}{q_k}+\sum_{j=2}^{u^*}r_{kj}n_\nu\theta_j\right)}=e^{2\pi
i\,\left(\frac{\ell
p_k}{q_k}+\sum_{j=2}^{u^*}r_{kj}\alpha_j\right)}\,,\quad
 1\leq k\leq u.
\end{equation}

For this, we first observe that given arbitrary
real numbers $\chi_2,\ldots,\chi_u$, it is always
possible to find a subsequence
$\{m_\nu\}_{\nu\geq 1}\subset\mathbb{N}$ such
that
\begin{equation}\label{eq8}
\lim_{\nu\to\infty}e^{2\pi i
r_{kj}\mathrm{\mathbf{q}}m_{\nu}
\theta_j}=e^{2\pi i
r_{kj}\mathrm{\mathbf{q}}\chi_j},\quad 1\leq
k\leq u,\ 2\leq j\leq u^*.
\end{equation}
In effect, consider the set of linear forms in
the variable $x$
\[\left\{r_{kj}\mathrm{\mathbf{q}}\theta_jx: 1\leq
k\leq u,\ 2\leq j\leq u^*\right\},\] and suppose
$\beta_{kj}$, $1\leq k\leq u$, $2\leq j\leq u^*$,
are integers such that
\[
\sum_{k,j}
\beta_{kj}r_{kj}\mathrm{\mathbf{q}}\theta_jx=
x\sum_{j=2}^{u^*}\left(\sum_{k=1}^u\beta_{kj}r_{kj}\mathrm{\mathbf{q}}\right)\theta_j
\]
is a linear form whose coefficient is an integer.
Then, by the linear independence of the numbers
$1,\theta_2,\ldots\theta_{u^*}$, we must have
$\sum_{k=1}^u\beta_{kj}r_{kj}\mathrm{\mathbf{q}}=0$
for every $2\leq j\leq u^*$. Hence, for an
arbitrary collection of real numbers
$\chi_2,\ldots,\chi_{u^*}$, we have $\sum_{k,j}
\beta_{kj}r_{kj}\mathrm{\mathbf{q}}\theta_j\chi_j=0$,
and so by Kronecker's theorem \cite[Chap. III,
Thm. IV.]{Cassels}, it is possible to find a
subsequence  $\{m_\nu\}_{\nu\geq 1}$ satisfying
(\ref{eq8}).

Then, choose a subsequence $\{m_\nu\}_{\nu\geq
1}$ satisfying (\ref{eq8}) with
$\chi_j=(\alpha_j-\ell\theta_j)/\mathrm{\mathbf{q}}$,
$2\leq j\leq u^*$. Then, (\ref{eq80}) is
satisfied by the subsequence
$n_\nu:=\mathrm{\mathbf{q}}m_\nu+\ell$, $\nu\in
\mathbb{N}$.

It only remains to prove that there is a function
$f$ of the form (\ref{eq72}) that is not
identically zero. Assume without loss of
generality that the set $\{k:z_k=z_1,\ 1\leq
k\leq u\}$ consists of the numbers
$1,2,\ldots,u'$ for some $u'\leq u$. It suffices
to show that it is impossible to have
\begin{equation}\label{eq81} \sum_{k=1}^{u'}\varphi'(z_k)\hat{A}_k e^{2\pi
i\,\left(\frac{\ell
p_k}{q_k}+\sum_{j=2}^{u^*}r_{kj}\alpha_j\right)}=0,
\quad \ell\in \{1,\ldots,\mathbf{q}\},\
\alpha_2,\ldots,\alpha_{u^*}\in \mathbb{R}.
\end{equation}
Assume, on the contrary, that this is the case.
Since $\varphi'(z_1)\hat{A}_1\not=0$ and
$r_{1j}=0$ for all $2\leq j\leq u^*$, we must
obviously have $r_{kj}=0$ for all $1\leq k\leq
u'$, $2\leq j\leq u^*$, and consequently,
\[
\theta_k=\frac{p_k}{q_k}\,,\qquad
k=1,2,\ldots,u'.
\]

Let $q'\leq \mathrm{\mathbf{q}}$ be the least
common multiple of the denominators
$q_1,q_2,\ldots,q_{u'}$, and for
$k=1,2,\ldots,u'$, set $p'_k:=p_k q'/q_k$, so
that $1\leq p'_k\leq q'$, and since
$\theta_1,\theta_2,\ldots,\theta_{u'}$ are
pairwise distinct, so are the numbers
$p'_1,p'_2,\ldots,p'_{u'}$, and therefore $u'\leq
q'$. Then, by (\ref{eq81}) we must have
\[
\sum_{k=1}^{u'}\varphi'(z_k)\hat{A}_k
\left(e^{2\pi i \ell/q'}\right)^{p'_k}=0 \quad
\forall\, \ell\in\{1,\ldots, q'\}.
\]
But this homogenous system of linear equations
with unknowns $\varphi'(z_k)\hat{A}_k$, $1\leq
k\leq u'$, has only the trivial solution, since
the Vandermonde matrix
$\left(a_{l,m}\right)_{1\leq\ell,m\leq q'\ }$,
$a_{l,m}=\left(e^{2\pi i\,\ell /q'}\right)^m$, is
nonsingular. This contradicts that all the
numbers $\varphi'(z_k)\hat{A}_k $ are nonzero.

\paragraph{Proof of Corollary \ref{cor4}.} \emph{Part
(a):} This is just a straightforward consequence
of Theorem \ref{thm6} and Hurwitz's theorem,
therefore, we omit it.

\emph{Part (b)}: Suppose there is a compact set
$E\subset G_\rho$ and a subsequence
$\{n_j\}\subset\mathbb{N}$ such that $P^*_{n_j}$
has more than $2(J-1)$ zeros on $E$ counting
multiplicities, where $J$ is the number of
corners of $L_\rho$ ($J\leq s$). By Assumption
A.3 (and extracting a subsequence from $\{n_j\}$
if needed), we can assume that $\{P^*_{n_j}\}$
converges locally uniformly on $G_\rho$ to a
nonzero function of the form $R(\varphi(z))$,
where $R(w)$ is a rational function with
numerator having degree no larger than $2(J-1)$.
By Hurwitz's theorem, there is an open set
$U\supset E$ such that for all $j$ large enough,
$P^*_{n_j}$ and $R(\varphi(z))$ have the same
number of zeros on $U$, contradicting our
assumption.

We now show that $\nu_{n}\wc \mu_{L_\rho}$, for
which we use standard arguments. By Helly's
selection theorem \cite[Thm. 1.3]{Saff}, from
every subsequence of $\{\nu_{n}\}_{n\geq 1}$ it
is possible to extract another subsequence
converging in the weak*-topology to a measure
$\mu$. Thus, to finish the proof, it suffices to
show that every such limit measure $\mu$ is the
equilibrium measure $\mu_{L_\rho}$ of $L_{\rho}$.

Then, suppose $\nu_{n_j}\wc \mu$ as $j\to\infty$,
so that by Corollary \ref{cor2} and what we just
proved above, $\mu$ must be supported on
$L_\rho$. Let us denote by $U^\alpha(z)$ the
logarithmic potential of the measure $\alpha$,
that is,
\[
U^{\alpha}(z):=\int\log
\frac{1}{|z-t|}d\alpha(t).
\]
Then, we obtain from Theorem \ref{thm5} and the
fact that $\nu_{n_j}\wc \mu$ that for all
$z\in\Omega_\rho$
\[
U^{\mu}(z)=\lim_{j\to\infty}U^{\nu_{n_j}}(z)
=\lim_{j\to\infty}\frac{1}{n_j}\log\frac{\kappa_{n_j}}{|P_{n_j}(z)|}=
\log|\phi'(\infty)/\phi(z)|\,.
\]
On the other hand, it is not difficult to see
from the definition of $\mu_{L_\rho}$ in
(\ref{eq82}) that for all $z\in \Omega_\rho$,
$U^{\mu_{L_\rho}}(z)=\log|\phi'(\infty)/\phi(z)|$.
Hence, $\mu$ and $\mu_{L_\rho}$ are two measures
supported on $L_\rho$ whose logarithmic
potentials coincide in $\Omega_\rho$, which in
view of Carleson's theorem \cite[Thm. 4.13]{Saff}
forces $\mu=\mu_{L_\rho}$.

\paragraph{Proof of Corollary
\ref{cor3}.} By Theorem \ref{thm6}, there is a
subsequence $\{n_j\}\subset \mathbb{N}$ such that
$\{P^*_{n_j}\}$ converges locally uniformly on
$G_\rho$ to a nonzero function of the form
(\ref{eq72}). Then, proceeding exactly as in the
proof of Corollary \ref{cor4}(b), we find that
$\nu_{n_j}\wc \mu_{L_\rho}$ as $j\to\infty$, and
since the support of the equilibrium measure
$\mu_{L_\rho}$ is the whole of $L_\rho$, we must
have $L_\rho\subset\mathcal{Z}$.


\begin{thebibliography}{99}
\bibitem{Carleman}{Carleman, T. 1922. \"{U}ber die approximation analytischer
funktionen durch lineare aggregate von
vorgegebenen potenzen. \emph{Archiv. f\"{o}r
Math. Atron. och Fysik} 17: 1-30.}

\bibitem{Cassels}{Cassels, J. W. S.  1957. \emph{An introduction  to diophantine
approximation.} Cambridge Tracts in Mathematics
and Mathematical Physics. New York: Cambridge
University Press.}

\bibitem{Davis}{Davis, P. J. 1974. \emph{The Schwarz function and its
applications.} The Carus Mathematical Monographs
17. The Mathematical Association of America.}

\bibitem{Gaier1}{Gaier, D. 1987. \emph{Lectures on complex approximation.} Boston:
Birkh\"{a}user.}

\bibitem{Knopp}{Knopp, K. 1952. \emph{Theory of functions, Vol. 2.} New York: Dover.}

\bibitem{Korovkin}{Korovkin, P. P. 1947. The asymptotic representation of polynomials orthogonal over a region.
\emph{Dokl. Akad. Nauk. SSSR.} 58: 1883-1885 (in
Russian).}

\bibitem{Lehman}{Lehman, R. S. 1957. Development of the mapping function at an analytic corner.
\emph{Pacific J. Math.} 7: 1437-1449.}

\bibitem{Levin}{Levin, A. L.,  E. B. Saff and N. S. Stylianopoulos. 2003. Zero distribution of Bergman
orthogonal polynomials for certain planar
domains. \emph{Const. Approx.} 19: 411-435.}

\bibitem{andrei}{Mart\'{\i}nez-Finkelshtein, A., K. T.-R. McLaughlin and E. B. Saff. 2006. Szeg\H{o}
orthogonal polynomials with respect to an
analytic weight: canonical representation and
strong asymptotics. \emph{Constr. Approx.} 24:
319-363.}

\bibitem{andrei1}{Mart\'{\i}nez-Finkelshtein, A., K. T.-R. McLaughlin and E. B. Saff. 2006. Asymptotics of orthogonal polynomials with respect to an analytic weight with algebraic singularities on the
circle. \emph{Int. Math. Res. Notices}
(doi:10.1155/IMRN/2006/91426).}

\bibitem{victor}{Maymeskul, V. and E. B. Saff. 2003. Zeros of polynomials orthogonal over regular $N$-gons.
\emph{J. Approx. Theory} 122: 129-140.}

\bibitem{minafaber}{Mi\~{n}a-D\'{\i}az, E. 2007.
On the asymptotic behavior of Faber polynomials
for domains with piecewise analytic boundary.
\emph{Submitted for publication.}
ArXiv:0706.1806v1 [math.CA].}

\bibitem{mina}{Mi\~{n}a-D\'{\i}az, E.,  E. B. Saff and N. S. Stylianopoulos. 2005. Zero distributions
for polynomials orthogonal with weights over
certain planar regions. \emph{CMFT} 5: 185-221.}



\bibitem{Pomm}{Pommerenke, Ch. 1992. \emph{Boundary behavior of conformal
maps.} New York: Springer-Verlag Berlin
Heidelberg.}



\bibitem{Saff}{ Saff, E. B., and V. Totik. 1997. \emph{Logarithmic potentials with external
fields.} Berlin: Springer-Verlag.}

\bibitem{Smirnov}{Smirnov, V. I., and N. A. Lebedev. 1968. \emph{Functions of a complex variable: constructive
theory.} Cambridge, Massachusetts: The M.I.T.
Press.}

\bibitem{Simon}{Simon, B. 2006. Fine structure of the zeros of orthogonal polynomials, I. A tale of two pictures.
 \emph{ETNA} 25: 328-368.}


\bibitem{Suetin1}{ Suetin, P. K. 1974. \emph{Polynomials orthogonal over a region and Bieberbach
polynomials.} Proceedings of the Steklov
Institute of Mathematics, Amer. Math. Soc.,
Providence, Rhode Island.}

\bibitem{suetinfaber}{ Suetin, P. K. 1998. \emph{Series of Faber polynomials.} Amsterdam: Gordon and
 Breach Science Publications.}


\bibitem{Sza}{Szabados, J. 1979. On some problems connected with polynomials orthogonal on the complex unit
circle. \emph{Act. Math. Scien. Hung.} 33:
197-210.}


\bibitem{Walsh}{Walsh, J. L. 1969. \emph{Interpolation and approximation by rational functions in
the complex domain.} Amer. Math. Soc. Colloq.
Publ. 20, 5th ed., Amer. Math. Soc., Providence,
Rhode Island.}

\end{thebibliography}
\end{document}